\renewcommand*{\backref}[1]{}
\renewcommand*{\backrefalt}[4]{%
    \ifcase #1 (Not cited.)%
    \or        (Cited on page~#2.)%
    \else      (Cited on pages~#2.)%
    \fi}
\numberwithin{equation}{section}
\numberwithin{figure}{section}
\theoremstyle{theorem}
  \newtheorem*{cor*}{Corollary}
  \newtheorem*{conj*}{Isomorphism Conjecture}
  \newtheorem*{thm*}{Theorem}
  \newtheorem*{lem*}{Lemma}
  \newtheorem*{klem*}{Key Lemma}
  \newtheorem*{klemb*}{Key Lemma \emph{bis}}
  \newtheorem*{claim*}{Claim}
  \newtheorem*{mclaim*}{Main Claim}
  \newtheorem*{rep_thm1}{Theorem \ref{main_thm} \emph{bis}}
  \newtheorem*{rep_thm2}{Theorem \ref{new_stability} \emph{bis}}
  \newtheorem*{rep_thm3}{Theorem \ref{restriction_inj} \emph{bis}}
  \newtheorem{thmx}{Theorem}
  \newaliascnt{propx}{thmx}
  \newtheorem{propx}[propx]{Proposition}
  \newtheorem{thm}{Theorem}[section]
  \newaliascnt{conj}{thm}
  \newaliascnt{lem}{thm}
  \newtheorem{lem}[lem]{Lemma}
  \newaliascnt{klem}{thm}
  \newaliascnt{cor}{thm}
  \newtheorem{cor}[cor]{Corollary}
  \newaliascnt{prop}{thm}  
  \newtheorem{prop}[prop]{Proposition}
\theoremstyle{definition}
  \newaliascnt{defn}{thm}
  \newtheorem{defn}[defn]{Definition}
  \newcommand{\continuation}{??}
    \newaliascnt{nnot}{thm}
  \newaliascnt{exmpl}{thm}
  \newtheorem{exmpl}[exmpl]{Example}
\theoremstyle{remark}
  \newaliascnt{rem}{thm}
  \newtheorem{rem}[rem]{Remark}
  \theoremstyle{remark}
  \newaliascnt{con}{thm}
\newcommand{\bbC}{\mathbb{C}}
\newcommand{\bbF}{\mathbb{F}}
\newcommand{\bbH}{\mathbb{H}}
\newcommand{\bbN}{\mathbb{N}}
\newcommand{\bbP}{\mathbb{P}}
\newcommand{\bbR}{\mathbb{R}}
\newcommand{\bbZ}{\mathbb{Z}}
\newcommand{\clB}{\mathcal{B}}
\newcommand{\clD}{\mathcal{D}}
\newcommand{\clE}{\mathcal{E}}
\newcommand{\clF}{\mathcal{F}}
\newcommand{\clH}{\mathcal{H}}
\newcommand{\clI}{\mathcal{I}}
\newcommand{\clP}{\mathcal{P}}
\newcommand{\clQ}{\mathcal{Q}}
\newcommand{\frS}{\mathfrak{S}}
\newcommand{\scB}{\mathscr{B}}
\newcommand{\scF}{\mathscr{F}}
\newcommand{\scL}{\mathscr{L}}
\DeclareMathOperator{\Aut}{Aut}
\DeclareMathOperator{\CR}{cr}
\DeclareMathOperator{\dd}{d}
\DeclareMathOperator{\EE}{E}
\DeclareMathOperator{\id}{id}          
\DeclareMathOperator{\HH}{H}
\DeclareMathOperator{\res}{res}
\newcommand{\Hb}{{\rm H}_{\rm b}}
\newcommand{\Linfty}{L^\infty}
\newcommand{\Binfty}{\scL^\infty}
\newcommand{\lmod}{\!\setminus\!}
\DeclareMathOperator{\GL}{GL}         % general linear group
\DeclareMathOperator{\SL}{SL}         % special linear group
\DeclareMathOperator{\PSL}{PSL}       % projective special linear group
\DeclareMathOperator{\OO}{O}               % orthogonal group
\DeclareMathOperator{\SO}{SO}             % special orthogonal group
\DeclareMathOperator{\Sp}{Sp}              % symplectic group
\newcommand{\qand}{\quad \mathrm{and} \quad}
\begin{document}

\title[Bounded cohomology of complex Lie groups of classical type]{The degree-three bounded cohomology of complex \vspace{3pt}\\ Lie groups of classical type \vspace{-2pt}}
% Author I information
\author{Carlos De la Cruz Mengual \vspace{-6pt}}
\address{Faculty of Electrical and Computer Engineering \\ Technion, Haifa, Israel}
\email{c.delacruz@technion.ac.il}

\begin{abstract}
We establish Monod's isomorphism conjecture in degree-three bounded cohomology for every complex simple Lie group of classical type. Our main ingredient is a bounded-cohomological stability theorem with an optimal range in degree three that we bootstrap from previous stability results by the author and Hartnick. The bootstrapping procedure relies on the occurrence in our setting of a variant of the recently observed phenomenon of secondary stability in the sense of Galatius--Kupers--Randal-Williams.  \vspace{-30pt}
\end{abstract}

\maketitle
%\tableofcontents 

%%%%%%%%%%%%%%%%%%%%%%%%%%%%%%%%%%%%%%%%%%%%%%%%%%%%%%%%%%%%%%%%%%%%%%
%%%%%%%%%%%%%%%%%%%%%%%%%%%%%%%%%%%%%%%%%%%%%%%%%%%%%%%%%%%%%%%%%%%%%%
\section{Introduction}

\subsection{Statement of main result}
This article is the third one in a series concerned with the explicit computation of the continuous bounded cohomology of Lie groups via stabilization techniques (see prequels \cite{DMH0,DM+Hartnick}). The theory of continuous bounded cohomology was developed in the early 2000s by Burger and Monod \cite{Burger-Monod2, Monod-Book} as a topological refinement of the celebrated notion for discrete groups (conceived by Johnson, Trauber, and Gromov \cite{Gromov} independently), with powerful applications in geometry,
dynamics and rigidity theory
\cite{Campagnolo-etal, Frigerio, Monod-Book}. 

Despite its power, the computation of continuous bounded cohomology remains a notorious challenge. In the setting of Lie groups, the problem can be reduced to the connected semisimple case, whose behavior is predicted by the prominent 

\begin{conj*} \label{conjecture}
For any connected semisimple Lie group $G$ with finite center, the comparison map $c^\ast:\Hb^\ast(G) \to \HH^\ast(G)$ is an isomorphism in every degree. 
\end{conj*}

Here we denote by $\HH^\ast(G)$ the continuous group cohomology of $G$ with coefficients on the trivial $G$-module $\bbR$, and by $\Hb^\ast(G)$ the corresponding continuous bounded cohomology. The comparison map $c^\ast$ is defined by forgetting the boundedness of a class. The isomorphism conjecture was formulated in this form by Monod in his ICM address \cite{Monod-Survey} in 2006. The surjectivity question, posed earlier by Dupont \cite{Dupont}, has been answered affirmatively in a variety of contexts, e.g. \cite{HartOtt2,LS}. We emphasize that the continuous cohomology of Lie groups is well understood in virtue of the van Est isomorphism (see e.g. \cite[Cor. XI.5.6]{BW} or \cite{Stas}) and classical results on the cohomology of compact symmetric spaces (see e.g. \cite{GHV,Toda-Mimura}). 

Beyond the trivial degrees zero and one, the isomorphism conjecture has been established fully in degree two by Burger and Monod \cite{Burger-Monod1}. However, in degrees $\geq 3$, the conjecture has seen scarce progress in the last two decades. In degree three, it has been verified for the groups 
$\SL_n(\bbR)$ \cite{Burger-Monod3,Monod-Stab} and $\SL_n(\bbC)$ \cite{Bloch,Monod-Stab,Goncharov,BBI}. In higher degrees, our lack of understanding of the conjecture is remarkable, only known to hold in degree four for $\SL_2(\bbR)$ \cite{HartOtt}. 

The goal of this paper is to expose further evidence in favor of the isomorphism conjecture.

\begin{thmx} \label{main_thm}
The isomorphism conjecture holds in degree three for every connected simple complex Lie group $G$ of classical type. In particular, we have $\Hb^3(G) \cong \HH^3(G) \cong \bbR$. 
\end{thmx}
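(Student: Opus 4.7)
The plan is to reduce the theorem, for each classical family of groups, to a low-rank case within the $A$-series (where the conjecture is already established) by propagating information through a stability argument. Since $\HH^3(G) \cong \bbR$ for every connected simple complex Lie group (by van Est together with the known cohomology of the compact dual symmetric space), and since the generator of $\HH^3(G)$ is represented by a bounded cocycle---for instance by pulling back the Bloch-Wigner/volume class along a homomorphism into some $\SL_n(\bbC)$, or via the Dupont-Goncharov construction---the comparison map is a priori surjective. It therefore suffices to prove the upper bound $\dim \Hb^3(G) \leq 1$.

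To obtain this upper bound, I would treat the families $\SO_{2n+1}(\bbC)$, $\Sp_{2n}(\bbC)$, and $\SO_{2n}(\bbC)$ separately, the $A$-series being already settled. For each family, write $G_n$ for the rank-$n$ member and use the standard inclusion $G_n \hookrightarrow G_{n+1}$. The previous stability theorem of the author with Hartnick supplies injectivity of the restriction $\Hb^3(G_{n+1}) \to \Hb^3(G_n)$ in some stable range. The main task, following the abstract, is to extend this injectivity down to the \emph{optimal} rank in degree three via a secondary-stability phenomenon: at the critical rank just below the previously available range, one needs to verify that the a priori obstruction to stability---a secondary class in the bar/semisimplicial resolution one step beyond what the DM-Hartnick argument uses---actually vanishes. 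Concretely, I would iterate the resolution by isotropic flags one extra step, set up the resulting double-complex spectral sequence in the bounded regime, and identify the only possibly non-zero secondary differential with a single numerical invariant that can be checked to vanish by a direct computation at the first nontrivial rank.

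Once injectivity is established all the way down to a small rank, I would conclude by invoking exceptional low-rank isomorphisms of Dynkin diagrams, such as $D_2 \simeq A_1 \times A_1$, $B_2 \simeq C_2$, $D_3 \simeq A_3$, and $B_1 \simeq C_1 \simeq A_1$. These identify the base case of each of the $B$, $C$, $D$ families with a group (or short product) built out of type-$A$ factors, whose third bounded cohomology is one-dimensional by the cited results on $\SL_n(\bbC)$ and a K\"unneth-type argument. Combined with the chain of injections produced by stability, this gives $\dim \Hb^3(G_n) \leq 1$ for every $n$, hence the theorem.

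The main obstacle will be the secondary-stability step. Bounded cohomology does not come equipped with an analogue of the $E_k$-cellular machinery of Galatius-Kupers-Randal-Williams, so the secondary information has to be constructed by hand. The delicate points are (i) ensuring that the extra step in the resolution retains the correct acyclicity/connectivity properties after passing to the bounded complexes---a subtle issue, since boundedness destroys many of the acyclic models usually available in the continuous setting---and (ii) pinning the resulting secondary obstruction class down to something genuinely low-rank, so that its vanishing can be verified against a known computation rather than being merely estimated.
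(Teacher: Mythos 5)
Your overall architecture does coincide with the paper's: reduce to the upper bound $\dim \Hb^3(G)\leq 1$, obtain it from a chain of injections $\Hb^3(G_{n+1})\hookrightarrow\Hb^3(G_n)$ pushed down to the optimal rank by a secondary-stability argument, and anchor each chain at a low-rank group of type $\mathbf{A}$. Your surjectivity shortcut---restricting the bounded Borel class from $\SL_n(\bbC)$ and noting that the corresponding restriction in \emph{continuous} cohomology is multiplication by the non-zero Dynkin index---is sound and close in spirit to the paper's Theorem~3, though the paper proves the finer statement that $\res_r$ is an isomorphism (isometric for $\mathbf{B},\mathbf{C}$) and, for $\mathbf{D}$, must work much harder via an explicit cocycle computation on the Furstenberg boundary of $\SO_4(\bbC)$. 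One correction on the base cases: $\mathbf{D}_2\cong\mathbf{A}_1\times\mathbf{A}_1$ has \emph{two}-dimensional $\Hb^3$ by K\"unneth, so it cannot serve as an anchor; the $\mathbf{D}$-chain must stop at $\SO_6(\bbC)\cong\mathbf{A}_3$, consistent with the exclusion of $\SO_4(\bbC)$ from the theorem.

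The genuine gap is the secondary-stability step, which you rightly identify as the crux but whose mechanism, as you describe it, would not work. There is no ``single numerical invariant \ldots checked to vanish by a direct computation at the first nontrivial rank.'' The failure of injectivity of $\iota_r^\ast$ is identified---via the spectral sequence of the resolution by the variety $\clP_{r+1}$ of isotropic \emph{points}, not flags---with $\Hb^3(G_{r+1}\curvearrowright\clP_{r+1})$, i.e.\ with the space of $G_{r+1}$-invariant essentially bounded solutions of a cocycle equation on $\clP_{r+1}^4$: a functional equation (a higher-rank Spence--Abel equation), not a number, and not one that can feasibly be solved head-on at low rank. The actual mechanism runs in the opposite direction from your plan: one shows that the quotients $G_r\lmod\clP_r^4$ and $G_r\lmod\clP_r^5$ are, as Lebesgue spaces, isomorphic to $\hat\bbC^2$ and $\hat\bbC^5$ \emph{independently of} $r$ (parametrized by $\omega$-cross-ratios), so the coboundary operator computing $\ker\iota_r^\ast$ is literally the same operator for all $r\geq r_1$; the vanishing is then \emph{imported from the large-$r$ range}, where it is already known from the De la Cruz Mengual--Hartnick stability theorem, rather than verified at small $r$. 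Without this $r$-independence of the configuration spaces (together with the transitivity and stabilizer computations that kill the low rows of the spectral sequence), your proposal has no way to produce injections below the previously known range. A smaller omission: the injections obtained this way are for the full orthogonal groups $\OO_m(\bbC)$, and transferring them to $\SO_{2r}(\bbC)$ in the $\mathbf{D}$ case is not automatic---restriction to the finite-index subgroup is injective in the direction that is useless for your dimension count---so a separate diagram chase against the $\GL/\SL$ stabilization is still required.
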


This statement encompasses the four classical complex families $\mathbf{A}_r, \, \mathbf{B}_r,\,\mathbf{C}_r,\,\mathbf{D}_r$, which consist of the connected Lie groups locally isomorphic to 
\begin{equation} \label{eq:families}
	\SL_{r+1}(\bbC), \quad \SO_{2r+1}(\bbC), \quad \Sp_{2r}(\bbC), \quad \SO_{2r}(\bbC).
\end{equation}
respectively. In each case, $r$ is the rank of the group, and all are simple Lie groups whenever $r \geq 1$, except for $\SO_2(\bbC)$ and $\SO_4(\bbC)$ (abelian and with two simple factors, respectively). 

Non-vanishing results for the continuous bounded cohomology of Lie groups find important applications in the study of representations of fundamental groups of manifolds, and in particular, of their moduli spaces and deformations. The case of surface group representations with Hermitian targets, studied extensively in existing literature, serves as the most prominent example of this claim (see, e.g., \cite{BIW}). The relevant cohomological input in this setting is the non-vanishing of $\Hb^2$ for non-compact Lie groups of Hermitian type. 

A bounded-cohomological approach has been also applied successfully in the study of 3-manifold group representations into $\PSL_n(\bbC)$---a group with non-vanishing $\Hb^3$---and has been instrumental in proofs of interesting rigidity phenomena (see, e.g., \cite{BBI,Farre}). It is our hope that \autoref{main_thm} be the foundation of future investigations of representation varieties of 3-manifold groups into more general complex Lie groups.

\subsection{The role of bounded-cohomological stability}
Each of the four sequences of groups indexed by $r$ and listed in \eqref{eq:families} can be organized as increasing chains of inclusions
\begin{equation} \label{inclusions_intro}
	S_0 \overset{\iota_0}{\hookrightarrow} S_1 \overset{\iota_1}{\hookrightarrow} S_2 \overset{\iota_2}{\hookrightarrow} \cdots \hookrightarrow S_r \overset{\iota_r}{\hookrightarrow} S_{r+1} \hookrightarrow \cdots
\end{equation}
where $S_r$ is the group of rank $r$ and the inclusions $\iota_r$ are block embeddings in appropriate bases. The following theorem concerning the degree-three continuous bounded cohomology of the groups in these families plays a key role in the proof of \autoref{main_thm}. 

\begin{thmx} \label{new_stability}
	The embeddings $\iota_r$ in each of the classical families induce the isomorphisms
\begin{equation*} \begin{array}{ccccccccccc}
	(\mathbf{A}_r)& \qquad & \Hb^3(\SL_2(\bbC)) &\xleftarrow{\ \cong} & \Hb^3(\SL_3(\bbC)) &\xleftarrow{\ \cong} & \cdots & \xleftarrow{\ \cong} & \Hb^3(\SL_{r+1}(\bbC)) & \xleftarrow{\ \cong} & \cdots \\
	(\mathbf{B}_r)& \qquad & \Hb^3(\SO_3(\bbC)) &\xleftarrow{\ \cong} & \Hb^3(\SO_5(\bbC)) &\xleftarrow{\ \cong} & \cdots & \xleftarrow{\ \cong} & \Hb^3(\SO_{2r+1}(\bbC)) & \xleftarrow{\ \cong} & \cdots \\
	(\mathbf{C}_r)& \qquad & \Hb^3(\Sp_2(\bbC)) &\xleftarrow{\ \cong} & \Hb^3(\Sp_4(\bbC)) & \xleftarrow{\ \cong} & \cdots & \xleftarrow{\ \cong} & \Hb^3(\Sp_{2r}(\bbC)) & \xleftarrow{\ \cong} & \cdots \\
	(\mathbf{D}_r)& \qquad & \Hb^3(\SO_6(\bbC)) & \xleftarrow{\ \cong} & \Hb^3(\SO_8(\bbC)) &\xleftarrow{\ \cong} & \cdots & \xleftarrow{\ \cong} & \Hb^3(\SO_{2r}(\bbC)) & \xleftarrow{\ \cong} & \cdots  \\[5pt]
\end{array} \end{equation*}
\end{thmx}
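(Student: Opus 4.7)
The plan is to bootstrap from the bounded-cohomological stability theorems of the prequels \cite{DMH0,DM+Hartnick}, which provide, for each of the four classical families, a stability range of the form ``$\iota_r^\ast \colon \Hb^3(S_{r+1}) \to \Hb^3(S_r)$ is an isomorphism for $r \geq r_0$'' with some threshold $r_0 \geq 1$ strictly larger than the starting index of the chain. The content of the present theorem is therefore to close the finite gap of unstable initial ranks $0 \leq r < r_0$ in each of the four sequences, and then propagate the isomorphism along the whole chain by composing with the already-known stable-range instances of $\iota^\ast$.

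To close this gap, I would import a variant of the secondary stability philosophy of Galatius--Kupers--Randal-Williams into the continuous bounded setting. For each critical rank $r$, I would construct a secondary stabilization operation $\sigma_r \colon \Hb^3(S_r) \to \Hb^3(S_{r+k})$, with $k$ chosen so that $r+k$ lies within the primary stable range, at the level of the alternating amenable resolution $\Linftya\bigl((S_{r+k}/P)^{\bullet+1}\bigr)^{S_{r+k}}$ where $P$ is a minimal (solvable, hence amenable) parabolic of $S_{r+k}$. Morally, $\sigma_r$ would be a cup-type operation pairing a given alternating $\Linftya$-cocycle on flags in the ``old'' $S_r$-directions with a fixed auxiliary bounded class supported on the $k$ ``extra'' flag coordinates. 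The construction will be arranged so that $\sigma_r$ commutes with the iterated primary restriction $(\iota_r \cdots \iota_{r+k-1})^\ast$, and a diagram chase combining $\sigma_r$ with the primary stability at rank $r+k$ will then force $\iota_r^\ast$ to be an isomorphism already at the unstable rank $r$.

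The main obstacle will be verifying that $\sigma_r$ is itself an isomorphism on $\Hb^3$ at each unstable rank. Unlike the discrete homotopical GKRW framework, where such secondary isomorphisms flow from an ambient $E_k$-algebra structure on configuration-space homology, continuous bounded cohomology of semisimple Lie groups admits no such evident scaffolding, and the verification must proceed directly. I expect the key input to be a Hochschild--Serre type spectral sequence associated with an appropriately chosen parabolic subgroup inside $S_{r+k}$ whose Levi factor interpolates between $S_r$ and $S_{r+k}$; its $E_2$-page would combine the vanishing of bounded cohomology of amenable groups in positive degree with the base-case computations of $\Hb^3$ at the bottom of each family---where the ambient group is locally isomorphic to $\SL_2(\bbC)$, $\SL_2(\bbC) \times \SL_2(\bbC)$, or $\SL_4(\bbC)$, and whose degree-three bounded cohomology is already known from \cite{Bloch,Monod-Stab,Goncharov,BBI}. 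Pinning down source and target of $\sigma_r$ as a single copy of $\bbR$ would then force $\sigma_r$ to be an isomorphism, completing the induction and delivering the optimal stability range asserted in the theorem.
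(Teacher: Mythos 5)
Your high-level framing is right---bootstrap from the stability ranges of \cite{DMH0,DM+Hartnick}, use a secondary-stability principle to push the range down to a base case that is of type $\mathbf{A}$ in disguise---but the central mechanism you propose does not exist in the form you describe, and this is where the actual work lies. You posit a stabilization operation $\sigma_r\colon \Hb^3(S_r)\to\Hb^3(S_{r+k})$ built as a cup-type pairing on the minimal-parabolic resolution, and then assume it can be shown to be an isomorphism. There is no known product structure of this kind in continuous bounded cohomology that increases the rank of the ambient group, and the flag variety of $S_{r+k}$ does not split equivariantly into ``old'' and ``new'' coordinates in a way that would support such a pairing. The paper's route is the opposite of a wrong-way map: it identifies the obstruction $\ker\iota_r^\ast$ itself with the degree-three bounded cohomology of the action $G_{r+1}\curvearrowright\clP_{r+1}$ on the variety of isotropic points (via the spectral sequence of \cite[Prop.~2.15]{DMH0}, Eckmann--Shapiro, and the vanishing of $\Hb^1,\Hb^2$), and then shows that the complex computing this obstruction is \emph{literally independent of $r$}: generic $G_r$-orbits of $4$- and $5$-tuples in $\clP_r$ are parametrized by $\omega$-cross-ratios, so the relevant coboundary operator is conjugate to a fixed operator $\Linfty(\hat\bbC^2)\to\Linfty(\hat\bbC^5)$ (a higher-rank Spence--Abel equation). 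Stability of the kernel then combines with the vanishing of the kernel in the known stable range to force injectivity all the way down to $r_1$. Your proposal contains no substitute for this parametrization step, which is the technical heart of the argument.

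A second gap: injectivity of all $\iota_r^\ast$ plus one-dimensionality at the bottom only yields $\dim\Hb^3(S_r)\le 1$; you still need non-vanishing at every rank to upgrade injections to isomorphisms. For $\mathbf{B}_r$ and $\mathbf{C}_r$ this follows from the principal $\SL_2(\bbC)$ and the isometry result of \cite{BBI}, but for $\mathbf{D}_r$ the irreducible representation $\Pi_{2r}$ does not land in $\SO_{2r}(\bbC)$, and the paper must instead evaluate the Goncharov--Bucher--Burger--Iozzi volume cocycle explicitly on (a null embedding of) the Furstenberg boundary of $\SO_4(\bbC)$, passing through discrete bounded cohomology and Monod's equivariant measurable liftings to make sense of the restriction at the cocycle level. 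Your appeal to ``base-case computations'' does not cover this; if your $\sigma_r$ existed and were injective it would supply the non-vanishing, but that only underscores how much is being assumed of an unconstructed map.
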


The property that the embeddings $\iota_r$ in a chain of inclusions as \eqref{inclusions_intro} induce isomorphisms 
\begin{equation} \label{induced_maps}
	\Hb^q(S_{r_0}) \xleftarrow{\ \cong} \ \Hb^q(S_{r_0+1}) \xleftarrow{\ \cong} \ \Hb^q(S_{r_0+2}) \xleftarrow{\ \cong} \ \cdots
\end{equation}
 for every degree $q \in \bbN$, starting from an index $r_0 = r_0(q)$, is known as \emph{bounded-cohomological stability} (abbrv. \emph{bc-stability}). An assignment $q \mapsto r_0(q)$ as above is called \emph{bc-stability range}. 

\autoref{new_stability} settles the \emph{optimal} (i.e. lowest possible) bc-stability range $r_0(3)$ in degree $q=3$ for the four complex classical families. In the case of the $\mathbf{A}_r$ family, \autoref{new_stability} was proved by Bucher, Burger, and Iozzi \cite[Thm. 2]{BBI}, relying on work by Monod \cite{Monod-Stab}. Establishing the statement for the remaining three families is the crux of this article. 

The isomorphism conjecture, if true, would imply bc-stability for all classical families of Lie groups---for which continuous cohomology is known to stabilize. Our derivation of \autoref{main_thm} from \autoref{new_stability} is a sort of converse of this implication, based on the following principle. If a classical family $(S_r)_{r \in \bbN}$ is bc-stable with range $r_0(q)$, then proving the degree-$q$ isomorphism conjecture for all the groups $(S_r\mid r \geq r_0(q))$ within the range of bc-stability is reduced to verifying it only for $S_{r_0(q)}$. The specific feature that favors the application of this principle in our setting is that the range $r_0(3)$ established by \autoref{new_stability} is low enough to reach groups whose $\Hb^3$ is well understood. Indeed, all the groups at the base of the respective isomorphism chains for the families $\mathbf{B}_r, \mathbf{C}_r, \mathbf{D}_r$ are of type $\mathbf{A}_r$ in disguise.

\subsection{A secondary stability phenomenon} \label{intro_proof}
To explain the structure of our proof of \autoref{new_stability} for the families $\mathbf{B}_r, \mathbf{C}_r, \mathbf{D}_r$, we revisit the argument in Bucher--Burger--Iozzi \cite{BBI} for the $\mathbf{A}_r$ case. 

In \cite{Monod-Stab}, Monod proved the bc-stability of the family $\mathbf{A}_r$, with an addition that ensures a few extra injections---instead of isomorphisms---beyond the regime of bc-stability. In degree three:
\begin{equation*} 
	\Hb^3(\SL_2(\bbC)) \hookleftarrow \Hb^3(\SL_3(\bbC)) \hookleftarrow \Hb^3(\SL_4(\bbC)) \cong \Hb^3(\SL_5(\bbC)) \cong \cdots \cong \Hb^3(\SL_{r+1}(\bbC)) \cong \cdots  \vspace{-1pt}
\end{equation*}
Since $\Hb^3(\SL_2(\bbC))\cong \bbR$ (see \cite{Bloch,Burger-Monod3}), one deduces that $\Hb^3(\SL_n(\bbC))$ must have \emph{at most} dimension one for $n \geq 3$. That the dimension is \emph{at least} one is the content of  \cite{BBI}, which exhibits a non-zero class in $\Hb^3(\SL_n(\bbC))$ based on a construction by Goncharov \cite{Goncharov}. %In sum, the chain of injections above can be upgraded to one of isomorphisms.

In analogy to \eqref{induced_maps}, we call \emph{weak} bc-stability the property that the embeddings $\iota_r$ in \eqref{inclusions_intro} induce injections starting from an index $r_1 = r_1(q)$ for every degree $q \in \bbN$. The (weak) bc-stability of the complex $\mathbf{B}_r, \mathbf{C}_r, \mathbf{D}_r$---among other classical families---was first proved by Hartnick and the author in \cite[Thm. A, Cor. B]{DM+Hartnick}. In recent work, Kastenholz and Sroka \cite{KasSr} improved the range of weak bc-stability given in \cite{DM+Hartnick} for those families to $r_1(q)=2q+2$. 

As opposed to Monod's weak bc-stability result \cite{Monod-Stab} for $\mathbf{A}_r$, the ranges established in the references \cite{DM+Hartnick,KasSr} for the complex families $\mathbf{B}_r, \mathbf{C}_r, \mathbf{D}_r$ do not yield a reduction to a group whose $\Hb^3$ is understood. Thus, our first step towards \autoref{new_stability} is the following range improvement in degree three.

\begin{klem*} \hypertarget{bootstrap}
The block embeddings $\iota_r$ induce injections 
\begin{equation*} \begin{array}{ccccccccccc}
	&&\Hb^3(\OO_3(\bbC)) &\hookleftarrow & \Hb^3(\OO_5(\bbC)) &\hookleftarrow & \cdots & \hookleftarrow & \Hb^3(\OO_{2r+1}(\bbC)) & \hookleftarrow & \cdots \\[2pt]
	&&\Hb^3(\Sp_2(\bbC)) &\hookleftarrow & \Hb^3(\Sp_4(\bbC)) & \hookleftarrow & \cdots & \hookleftarrow & \Hb^3(\Sp_{2r}(\bbC)) & \hookleftarrow & \cdots \\[2pt]
	\Hb^3(\OO_4(\bbC)) & \hookleftarrow & \Hb^3(\OO_6(\bbC)) &\hookleftarrow & \Hb^3(\OO_8(\bbC)) &\hookleftarrow & \cdots & \hookleftarrow & \Hb^3(\OO_{2r}(\bbC)) & \hookleftarrow & \cdots 
\end{array} \end{equation*}
\end{klem*}

\begin{rem}
The omission of the determinant-one condition in the orthogonal groups in the \hyperlink{bootstrap}{Key Lemma} is on purpose. Upgrading it to determinant-one groups is a technical point that we will treat later. We ask from the reader to ignore that at this stage, and assume for the sake of the introduction that the \hyperlink{bootstrap}{Key Lemma} has been proven for the special orthogonal groups.
\end{rem}

A remarkable aspect of the \hyperlink{bootstrap}{Key Lemma} is the next fact that lies at the heart of its proof, given in Section \ref{sec:bootstrap}. We will prove that if \emph{any} of the block embeddings $\iota_r$ for the families $\mathbf{B}_r, \mathbf{C}_r, \mathbf{D}_r$ induces an injection at the level of $\Hb^3$, then necessarily \emph{all} the $\iota_r$'s up to the ranges produced by the \hyperlink{bootstrap}{Key Lemma} must induce injections. In this sense, any arbitrary range (e.g., \cite{DM+Hartnick,KasSr}) is as good as any other, for either of them will allow us to bootstrap the optimal range \emph{a posteriori}. 
%we bootstrap an optimal range in degree three from any arbitrary known range of weak bc-stability, e.g., \cite{DM+Hartnick,KasSr}. 

The proof of this intriguing fact relies on a principle reminiscent of \emph{secondary stability} in the sense of Galatius--Kupers--Randal-Williams \cite{Galatius}. A subject of ongoing research in the realm of ordinary homology, secondary stability is the event in which ``the failure of stability is itself stable''. In our case, failure of weak bc-stability is measured by the kernels of the maps induced by the embeddings $\iota_r$. We prove the stability of those kernels based on a parametrization via cross-ratios of configuration spaces of isotropic projective points, exposed in Section \ref{homog}.

\subsection{Non-triviality of $\Hb^3$ and Gromov norms}
%In virtue of exceptional Lie algebra isomorphisms in low rank, the groups $\SO_3(\bbC)$, $\Sp_2(\bbC)$, and $\SO_6(\bbC)$ are all of type $\mathbf{A}_r$, and, hence, satisfy the isomorphism conjecture in degree three. In particular, the $\Hb^3$ of each of these groups is 1-dimensional. We conclude that all the bounded cohomology groups for the groups of type $\mathbf{B}_r, \mathbf{C}_r, \mathbf{D}_r$ that feature in the \hyperlink{bootstrap}{Key Lemma} must have at most dimension one. %Therefore, in order to prove \autoref{new_stability}, it suffices to exhibit non-zero classes in their $\Hb^3$. 
In analogy with the argument for $\mathbf{A}_r$ given above, it remains to show the non-triviality of $\Hb^3$ for the relevant groups. We will prove:  

\begin{thmx} \label{restriction_inj}
For any $r \in \bbN$, let $S_r$ be either $\SO_{2r+1}(\bbC),\, \Sp_{2r}(\bbC)$, or $\SO_{2r}(\bbC)$, and let $n=n(r)$ be the dimension of the standard $S_r$-representation, so that $S_r < \SL_{n}(\bbC)$. Then, the restriction
\begin{equation} \label{eq:restriction_intro}
	\res_r: \Hb^3(\SL_{n}(\bbC)) \to \Hb^3(S_r)
\end{equation}
is an isomorphism, except if $S_r = \SO_{2r}(\bbC)$ and $r \in \{1,2\}$. 
\end{thmx}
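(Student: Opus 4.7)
The plan is to combine Theorem \ref{new_stability} with the exceptional isomorphisms relating the lowest-rank groups of types $\mathbf{B}$, $\mathbf{C}$, $\mathbf{D}$ to type-$\mathbf{A}$ groups, and then to exploit the comparison-map square to detect non-triviality. As a first step, observe that each block embedding $S_{r'} \hookrightarrow S_{r}$ fixes the first $n(r')$ standard basis vectors of $\bbC^{n(r)}$, and so factors through the block embedding $\SL_{n(r')}(\bbC)\hookrightarrow \SL_{n(r)}(\bbC)$. This yields a commutative square of restriction maps whose vertical sides are exactly the bc-stability isomorphisms of Theorem \ref{new_stability}, applied both to the family $\mathbf{A}$ and to the family of $S_r$. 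Hence $\res_r$ is an isomorphism if and only if $\res_{r_{\min}}$ is, with $r_{\min}=1$ for the $\mathbf{B}$ and $\mathbf{C}$ families, and $r_{\min}=3$ for the $\mathbf{D}$ family.

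For the base cases, I would invoke the exceptional local isomorphisms $\Sp_2(\bbC)=\SL_2(\bbC)$ (tautologically), $\SO_3(\bbC)\cong \PSL_2(\bbC)$, and $\SO_6(\bbC)\cong \SL_4(\bbC)/\{\pm I\}$ (the spin double cover). Since quotients by finite (hence amenable) central subgroups induce isomorphisms in continuous bounded cohomology, we obtain $\Hb^3(\SO_3(\bbC))\cong \Hb^3(\SL_2(\bbC))\cong \bbR$ and $\Hb^3(\SO_6(\bbC))\cong \Hb^3(\SL_4(\bbC))\cong \bbR$; the latter uses bc-stability for $\mathbf{A}_r$. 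In particular, both source and target of the restriction map in the base case are one-dimensional.

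To conclude the base case, I would use the comparison-map square
\begin{equation*}
	\begin{array}{ccc}
		\Hb^3(\SL_n(\bbC)) & \xrightarrow{\ \res_{r_{\min}}\ } & \Hb^3(S_{r_{\min}}) \\[2pt]
		\big\downarrow c & & \big\downarrow c \\[2pt]
		\HH^3(\SL_n(\bbC)) & \xrightarrow{\ \res_{r_{\min}}\ } & \HH^3(S_{r_{\min}}).
	\end{array}
\end{equation*}
The left comparison map is an isomorphism by the known degree-three isomorphism conjecture for $\SL_n(\bbC)$ \cite{Bloch,Burger-Monod3,Monod-Stab,Goncharov,BBI}; the right comparison map is also an isomorphism, via the finite-cover identifications of the previous paragraph. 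The bottom map is non-zero by a classical van Est argument: both $\HH^3$'s are one-dimensional and, at the Lie-algebra cochain level, are generated by the invariant $3$-form $(X,Y,Z)\mapsto \langle [X,Y],Z\rangle$ arising from the Killing form of $\mathfrak{su}(n)$, respectively of the compact real form $\mathfrak{k}_{r_{\min}}$ of $S_{r_{\min}}$. Since the embedding $\mathfrak{k}_{r_{\min}}\hookrightarrow \mathfrak{su}(n)$ is an inclusion of simple Lie algebras, the ambient Killing form restricts to a non-zero multiple of the intrinsic one, so the pulled-back $3$-form is non-zero. Diagram-chasing then forces the top restriction to be non-zero, hence an isomorphism between one-dimensional spaces.

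The main expected obstacle is the bookkeeping of the compatibilities: ensuring that the bilinear forms defining the orthogonal and symplectic structures align across the ranks of each family so that the block embeddings at the $S_r$-level genuinely factor through those at the $\SL_n(\bbC)$-level, and justifying carefully the passage through the spin cover for the $\mathbf{D}_3$ case. Beyond this, the low-rank exclusions $\SO_{2r}(\bbC)$ with $r\in\{1,2\}$ are inevitable rather than artefacts of the proof: $\SO_2(\bbC)\cong \bbC^\ast$ is amenable and so $\Hb^3$ vanishes, while $\SO_4(\bbC)$ is isogenous to $\SL_2(\bbC)\times \SL_2(\bbC)$ and so has two-dimensional $\Hb^3$ by the K\"unneth formula for continuous bounded cohomology, in both cases preventing a one-dimensional restriction from $\Hb^3(\SL_n(\bbC))$ from being an isomorphism.
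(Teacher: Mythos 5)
Your base-case strategy is sound and genuinely different from the paper's. The paper proves non-triviality of $\res_{r}$ for types $\mathbf{B}$ and $\mathbf{C}$ by factoring the principal representation $\Pi_{n(r)}:\SL_2(\bbC)\to\SL_{n(r)}(\bbC)$ through $S_r$ and invoking the isometry statement of \cite{BBI}, and for type $\mathbf{D}$ by an explicit evaluation of the volume cocycle $B_4$ on the (null) image of the Furstenberg boundary of $\SO_4(\bbC)$ in $\bbF_4$, which requires Monod's equivariant-lifting theorem and a detour through discrete bounded cohomology. Your route instead pushes everything through the comparison square, using that the degree-three isomorphism conjecture is already known for $\SL_n(\bbC)$ and (via the exceptional isogenies) for the base groups $\Sp_2(\bbC)$, $\SO_3(\bbC)$, $\SO_6(\bbC)$, and detecting non-vanishing of the bottom arrow by the Dynkin index of the embedding of compact forms. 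That is a legitimate and arguably cleaner way to get the base cases, and it sidesteps the measure-theoretic difficulties the paper has to fight in Section 5.

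The gap is in your reduction step, and it is a circularity: you take the right-hand vertical arrow of your square to be ``the bc-stability isomorphism of Theorem \ref{new_stability} applied to the family of $S_r$''. But Theorem \ref{new_stability} for the families $\mathbf{B}_r,\mathbf{C}_r,\mathbf{D}_r$ is \emph{deduced from} Theorem \ref{restriction_inj} in this paper (its proof reads: ``Theorem \ref{restriction_inj} \emph{bis} implies $\dim\Hb^3(S_r)=1$, so the injections $\iota_r^\ast$ are isomorphisms''), so you may not assume it here. What is available independently is only the \hyperlink{bootstrap}{Key Lemma}, i.e.\ \emph{injectivity} of $\iota_r^\ast$. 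For $\mathbf{B}_r$ and $\mathbf{C}_r$ this suffices to repair your argument: injectivity of the right vertical plus commutativity shows $\res_r\neq 0$, hence $\res_r$ is injective, and the chain of injections down to the one-dimensional base gives $\dim\Hb^3(S_r)\le 1$ and hence surjectivity. For $\mathbf{D}_r$, however, there is a second problem you do not address: the Key Lemma is proved only for the full orthogonal groups $\OO_{2r}(\bbC)$, and a priori one only knows that $\Hb^3(\SO_{2r}(\bbC))$ \emph{contains} $\Hb^3(\OO_{2r}(\bbC))$ (the restriction to the index-two normal subgroup is injective, not obviously surjective). So neither the injectivity of $\iota_r^\ast$ at the $\SO$-level nor the bound $\dim\Hb^3(\SO_{2r}(\bbC))\le 1$ comes for free; establishing them is exactly the content of the paper's commutative-cube induction (Lemma \ref{ind_step}), which interleaves the surjectivity of $\Hb^3(\OO_{2r}(\bbC))\to\Hb^3(\SO_{2r}(\bbC))$ with the isomorphism statement for $\res_r$. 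Your proposal would need to incorporate an argument of this kind for type $\mathbf{D}$ to be complete.
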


Since $\Hb^3(\SL_n(\bbC))$ is 1-dimensional for every $n \geq 2$, the injectivity of the map \eqref{eq:restriction_intro} implies the desired non-triviality statement. For groups of types $\mathbf{B}_r$ and $\mathbf{C}_r$, this follows from the next proposition, which is of independent interest, for it also elucidates the norm structure of $\Hb^3$.  

%Its surjectivity follows then from the upper bound on the dimension of $\Hb^3(S_r)$ given by the \hyperlink{bootstrap}{Key Lemma}. 
%The bounded Borel class $b_n$ is a specific generator of $\Hb^3(\SL_n(\bbC))$, whose Gromov norm $\Vert b_n\Vert =n(n^2-1)/6 \cdot v_{\bbH^3}$ was computed in Bucher--Burger--Iozzi \cite{BBI}. Here, $v_{\bbH^3} \approx 1.0149...$ denotes the maximal volume of an ideal tetrahedron in hyperbolic 3-space.
%The proofs of Theorems \ref{new_stability} and \ref{restriction_inj} for  groups of types $\mathbf{B}_r$ and $\mathbf{C}_r$ are given in Section \ref{sec:BC}. The injectivity of the restriction map \eqref{eq:restriction_intro} in these two cases follows from the next proposition, which is of independent interest, for it also elucidates the norm structure of $\Hb^3$.  

\begin{propx} \label{thm:norms}
For $r \in \bbN$, if $S_r$ in \autoref{restriction_inj} is $\SO_{2r+1}(\bbC)$ or $\Sp_{2r}(\bbC)$, then $\res_r$ is isometric. 
\end{propx}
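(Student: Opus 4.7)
Since pullbacks in bounded cohomology are always $1$-Lipschitz with respect to Gromov norms, we automatically have $\|\res_r(\alpha)\|_{S_r} \le \|\alpha\|_{\SL_n(\bbC)}$. Combined with the fact that $\res_r$ is an isomorphism of one-dimensional real vector spaces (by \autoref{restriction_inj} together with $\Hb^3(\SL_n(\bbC)) \cong \bbR$), the proposition reduces to producing the reverse inequality $\|\res_r(\alpha)\|_{S_r} \ge \|\alpha\|_{\SL_n(\bbC)}$ for a generator $\alpha$.

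The plan is to factor $\res_r$ through a further restriction to a subgroup $H \hookrightarrow S_r$ whose composite $\Hb^3(\SL_n(\bbC)) \to \Hb^3(H)$ can be shown to be isometric: since any composition of norm-non-increasing maps that is globally isometric must be isometric factor-by-factor, this forces $\res_r$ itself to be isometric. For $S_r = \Sp_{2r}(\bbC)$, a canonical choice is $H = \Sp_2(\bbC) \cong \SL_2(\bbC)$ stabilizing a 2-dimensional symplectic subspace of $\bbC^{2r}$; the composite $\SL_2(\bbC) \hookrightarrow \Sp_{2r}(\bbC) \hookrightarrow \SL_{2r}(\bbC)$ then coincides with the standard block embedding. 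For $S_r = \SO_{2r+1}(\bbC)$ the setup is more delicate: for $r \ge 2$ one may employ the composite $\Sp_2(\bbC) \cong \SL_2(\bbC) \hookrightarrow \Sp_4(\bbC) \twoheadrightarrow \SO_5(\bbC) \hookrightarrow \SO_{2r+1}(\bbC)$ via the exceptional cover $\Sp_4(\bbC) \to \SO_5(\bbC)$, and the base case $r=1$ is handled separately through the exceptional isomorphism $\SO_3(\bbC) \cong \PSL_2(\bbC)$. The justification for the isometricity of each outer composite rests on an explicit cocycle-level analysis: one exhibits a bounded representative of $\alpha$ on the Furstenberg boundary whose $L^\infty$-norm equals $\|\alpha\|_{\SL_n(\bbC)}$ and is attained on 4-tuples of projective points lying in a common projective line, and then observes that every line in a $2$-dimensional symplectic, respectively totally isotropic, subspace is isotropic (the latter requires the Witt index to be at least $2$, which fails exactly in the excluded base case $r=1$ of $\mathbf{B}_r$).

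The main obstacle is precisely this cocycle-level norm comparison, particularly in the orthogonal case, where the composite $H \hookrightarrow \SL_n(\bbC)$ factors through a non-defining representation of $\SL_2(\bbC)$ and one must ensure that the essential supremum of the Bloch--Wigner/Goncharov cocycle is preserved through the passage across the cover $\Sp_4(\bbC) \twoheadrightarrow \SO_5(\bbC)$. These norm-level computations lie beyond the purely algebraic arguments that underpin \autoref{new_stability} and \autoref{restriction_inj}, and constitute the genuine technical input of the proposition; I would expect them to interface naturally with the boundary-theoretic parametrizations by configurations of isotropic projective points announced in Section~\ref{homog}.
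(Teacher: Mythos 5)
There is a genuine gap, and it lies in the choice of the subgroup $H$. Your sandwich logic is sound: if the composite $\Hb^3(\SL_n(\bbC)) \to \Hb^3(S_r) \to \Hb^3(H)$ is isometric and both factors are norm non-increasing, then $\res_r$ is isometric. But the composite you propose is \emph{not} isometric. For $S_r = \Sp_{2r}(\bbC)$ you take $H = \Sp_2(\bbC)$ stabilizing a symplectic plane, so that $\SL_2(\bbC) \hookrightarrow \SL_{2r}(\bbC)$ is the standard block embedding. By \cite{BBI}, however, $\Vert b_n \Vert = \tfrac{n(n^2-1)}{6}\, v_{\bbH^3}$, whereas the block embeddings send the bounded Borel classes to one another (this compatibility is what underlies the stabilization isomorphisms), so the block restriction maps $b_{2r}$ to a class of norm $v_{\bbH^3}$. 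The composite is therefore an isomorphism of one-dimensional spaces that \emph{contracts} the Gromov norm by the factor $\tfrac{2r(4r^2-1)}{6}$, and the sandwich collapses. The same objection applies to your composite $\SL_2(\bbC) \hookrightarrow \Sp_4(\bbC) \twoheadrightarrow \SO_5(\bbC) \hookrightarrow \SO_{2r+1}(\bbC)$ in type $\mathbf{B}_r$: the resulting $(2r+1)$-dimensional representation of $\SL_2(\bbC)$ is highly reducible, so there is no reason for the pullback to attain the norm $\tfrac{(2r+1)((2r+1)^2-1)}{6}\,v_{\bbH^3}$. Your heuristic that the supremum of the Goncharov cocycle is ``attained on 4-tuples of points in a common projective line'' in fact shows the opposite of what you want: over such configurations only a single summand $B_n^J$ contributes, giving at most $v_{\bbH^3}$.

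The missing idea is to use the \emph{irreducible} (principal) $n$-dimensional representation $\Pi_n: \SL_2(\bbC) \to \SL_n(\bbC)$ rather than a block embedding. Its image preserves a non-degenerate symmetric ($n$ odd) or alternating ($n$ even) form, hence lands in $S_r$, and \autoref{thm:irrep} (Bucher--Burger--Iozzi) states precisely that $\Pi_n^\ast$ is an \emph{isometric} isomorphism. Factoring $\Pi_{n(r)}^\ast = \bar\Pi_r^\ast \circ \res_r$ and running your own sandwich argument with this composite gives the proposition in two lines; no new cocycle-level computation is needed. Note also that this explains why the statement excludes $\SO_{2r}(\bbC)$: the irreducible even-dimensional representation of $\SL_2(\bbC)$ is symplectic, not orthogonal, so no such factorization through $\SO_{2r}(\bbC)$ exists (and indeed \autoref{norm_low} shows $\res_2$ is not isometric there). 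As written, your proposal also defers the ``genuine technical input'' to an unexecuted norm computation, so even structurally it is incomplete.
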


The proofs of \autoref{thm:norms} and Theorems \ref{new_stability} and \ref{restriction_inj} for the families $\mathbf{B}_r$ and $\mathbf{C}_r$, are given in Section \ref{sec:BC}. On the other hand, \autoref{thm:norms} does not hold in the $\mathbf{D}_r$ case (see \autoref{norm_low}). Thus, we give a separate argument for Theorems \ref{new_stability} and \ref{restriction_inj} for that family in Section \ref{sec:D}.

\subsection{Outlook}
In Section \ref{subsec:norm_D}, we discuss the norm structure of $\Hb^3$ for the classical complex Lie groups, and provide a conjecture in the $\mathbf{D}_r$ case, which remains open. 

With little extra bookkeeping of the various types of Levi factors of parabolic subgroups, it should be possible to apply our ideas to prove the degree-three isomorphism conjecture for the real split Lie groups $\SO_+(r+1,r)$, $\Sp_{2r}(\bbR)$, and $\SO_+(r,r)$. %As an incentive, we point out that an analogue of \hyperlink{bootstrap}{Key Lemma} suffices in this case since $\Hb^3(\SL_2(\bbR))$ vanishes, eliminating the need of exhibiting explicit classes.
In general, we expect the situation in degree three for classical groups to be fully determined by what occurs in rank one. 

Finally, an exciting and potentially fruitful research avenue is the further usage of secondary stability techniques in bounded cohomology. This could serve for optimizing existing stability ranges, and, eventually, lead to new computations of bounded cohomology in higher degrees. 
%The most fascinating aspect of our proof of \autoref{new_stability} is that it ultimately results from ``bootstrapping'' an \emph{optimal} range out of \emph{any} insufficient range. This raises the intriguing possibility that, in more general settings, an arbitrary stability range could be as good as any other, for it may be \emph{a posteriori} optimized through an argument akin to ours. 
%We are not aware of other instances in the literature---even in classical homological stability---of a bootstraping argument akin to ours. It would be of interest to understand if that idea could be applied to other families of groups and beyond the realm of bounded cohomology.

\subsection*{Acknowledgments}
I extend my gratitude to Marc Burger for suggesting the problem that led to this work. I also thank Uri Bader for many helpful discussions. I am indebted to Tobias Hartnick for the collaboration that led to the two prequels \cite{DMH0,DM+Hartnick}, essential to this article, and grateful for his valuable feedback throughout the development of this project. I thank Manuel Krannich for pointing me to the notion of secondary stability and for his comments on a previous version of this article. Finally, I thank Nicolas Monod for suggesting the transit over discrete bounded cohomology in the proof of \autoref{lem_ind_base}.   

This work contains results of my doctoral thesis \cite{DM-Thesis}, and was completed during a postdoctoral fellowship at the Weizmann Institute of Science, Israel. I acknowledge support of the Swiss National Science Foundation, through the Early Postdoc.Mobility Grant No. 188010.

\section{Notation and standing assumptions} \label{sec:notation}
\subsection{Notation on formed spaces} \label{subsec:notation}
In the rest of this article, we will adhere to the notation from \cite[\S 2]{DM+Hartnick}, which we summarize below. The notation will be slightly simplified, as we only consider symmetric or antisymmetric formed spaces over $\bbC$. 

Let $\varepsilon \in \{\pm 1\}$. An $\varepsilon$\emph{-symmetric formed space} is a pair $(V,\omega)$ of a finite-dimensional $\bbC$-vector space $V$ and an $\varepsilon$-symmetric bilinear form $\omega: V \times V \to \bbC$. 
We write $q(v)\coloneqq \omega(v,v)$ for the quadratic form. The rank of $(V,\omega)$ is the maximal dimension of a totally isotropic subspace of $V$. Any non-degenerate $\varepsilon$-symmetric space $(V,\omega)$ of dimension $n$ splits as the direct sum of formed spaces
\vspace{-2pt}
\begin{equation} \label{decomposition}
	V \cong \clH_\varepsilon^{\oplus r} \oplus \clE^{\oplus d},\vspace{-2pt}
\end{equation}
where $r$ is the rank of $(V,\omega)$, $\clH_\varepsilon$ is the $\varepsilon$-hyperbolic plane, $\clE$ is the Euclidean line, and $d = n - 2r \in \{0,1\}$. Furthermore, if $\varepsilon = -1$, then $d=0$. We denote by $(V_{r}^{\varepsilon,d},\omega_{r}^{\varepsilon,d})$ the formed space on the right-hand side of \eqref{decomposition}. An ordered basis $\scB_{r}^{\varepsilon,d}=\{e_r,\ldots,e_1,dh,f_1,\ldots,f_r\}$ of $V_{r}^{\varepsilon,d}$ is an adapted basis of $(V_{r}^{\varepsilon,d},\omega_{r}^{\varepsilon,d})$ if $\langle h\rangle \cong \clE$ and $\langle e_i,f_i \rangle \cong \clH$ for every $i$. The matrix
\begin{equation*} \label{matrix_J}
J_{r}^{\varepsilon,d}:=\begin{pmatrix}
			 0 & 0 & \varepsilon Q_r\\[-2pt] 
			 0 & 1_d & 0 \\[-2pt]
			 Q_r & 0 & 0
			\end{pmatrix} \in {\rm M}_{n}(\bbC),
\end{equation*}
represents the form $\omega_{r}^{\varepsilon,d}$ in the basis $\scB_{r}^{\varepsilon,d}$, where $Q_r \in {\rm M}_{r}(\bbC)$ is the matrix with 1's on its antidiagonal and zero elsewhere (the middle row and column are omitted if $d=0$). 

Let $G_{r}^{\varepsilon,d}$ be the automorphism group of $(V_{r}^{\varepsilon,d},\omega_{r}^{\varepsilon,d})$ and $S_{r}^{\varepsilon,d}  < G_{r}^{\varepsilon,d}$ its determinant-one subgroup. Both $G_{r}^{\varepsilon,d}$ and $S_{r}^{\varepsilon,d}$ are complex Lie subgroups of $\GL(V_{r}^{\varepsilon,d})$. Concretely, \vspace{-3pt}
\begin{equation*} 
	G_{r}^{\varepsilon,d} = \begin{cases}
		\OO_{2r+1}(\bbC), & (\varepsilon,d) = (+1,1) \\[-2pt]
		\Sp_{2r}(\bbC), & (\varepsilon,d) = (-1,0) \\[-2pt]
	\OO_{2r}(\bbC), & (\varepsilon,d) = (+1,0)
\end{cases}
\end{equation*}
%and $S_{r}^{\varepsilon,d}$ is the group of rank $r$ in the classical family that is determined by the choice of a tuple $(\varepsilon,d)$. 
Observe that \vspace{-3pt}
\begin{equation} \label{eq:splitting}
	G_r^{(+1,1)} \cong \{\pm I\} \times S_r^{(+1,1)} \qand G_r^{(-1,0)} = S_r^{(-1,0)}.\vspace{-2pt}
\end{equation} 

For fixed $d$, the inclusions $\scB_{r}^{\varepsilon,d} \hookrightarrow \scB_{r+1}^{\varepsilon,d}$ of adapted bases gives rise to the embeddings \vspace{-3pt}
\begin{equation} \label{eq:def_iotar}
	G_{0}^{\varepsilon,d} \overset{\iota_0}{\hookrightarrow} G_{1}^{\varepsilon,d} \overset{\iota_1}{\hookrightarrow} G_{2}^{\varepsilon,d} \hookrightarrow \cdots \hookrightarrow G_{r}^{\varepsilon,d} \overset{\iota_r}{\hookrightarrow} G_{r+1}^{\varepsilon,d} \hookrightarrow \cdots \vspace{-2pt}
\end{equation}
of automorphism groups and of their corresponding determinant-one subgroups $S_r^{\varepsilon,d}$. We abuse notation and denote by $\iota_r$ also the embedding $S_{r}^{\varepsilon,d} \hookrightarrow S_{r+1}^{\varepsilon,d}$ (see also \eqref{inclusions_intro}). 

\subsection{Standing assumptions} \label{subsec:assumptions}
Throughout the article, we will keep a fixed choice of a pair \vspace{-2pt}
\begin{equation} \label{eq:eps_d}
	(\varepsilon,d) \in \{(+1,1),(-1,0),(+1,0)\}, \vspace{-2pt}
\end{equation}
which is equivalent to the choice of the classical family $\mathbf{B}_r, \mathbf{C}_r, \mathbf{D}_r$, respectively. We omit the indices $\varepsilon,d$ whenever possible and write $V_r$, $G_r$, $S_r$, etc. instead of $V_{r}^{\varepsilon,d}$, $G_{r}^{\varepsilon,d}$, $S_{r}^{\varepsilon,d}$, etc. Finally, we set $n(r) \coloneqq \dim(V_r)$, and
\begin{equation*} 
	\quad r_0 \coloneqq \begin{cases}
		3, & \mbox{ if } (\varepsilon,d) = (+1,0), \\[-4pt] 
		1, & \mbox{ otherwise,}
	\end{cases} \quad
	r_1 \coloneqq \begin{cases}
		2, & \mbox{ if } (\varepsilon,d) = (+1,0), \\[-4pt]
		1, & \mbox{ otherwise.}
	\end{cases}
\end{equation*}

\subsection{Re-statement of main results}
In the notation fixed in this section, our main results admit the following compact re-statements (omitting the $\mathbf{A}_r$ case from Theorems \hyperlink{main_thm_bis}{\ref{main_thm}} and \hyperlink{new_stability_bis}{\ref{new_stability}}):

\begin{klemb*} \hypertarget{bootstrap_bis}
For $r \geq r_1$, the induced map $\iota_r^\ast: \Hb^3(G_{r+1}) \to \Hb^3(G_r)$ is injective. 
\end{klemb*}

\begin{rep_thm1} \hypertarget{main_thm_bis}
For $r \geq r_0$, the comparison map $c^3: \Hb^3(S_{r}) \to \HH^3(S_r)$ is an isomorphism. 
\end{rep_thm1}

\begin{rep_thm2} \hypertarget{new_stability_bis}
For $r \geq r_0$, the induced map $\iota_r^\ast: \Hb^3(S_{r+1}) \to \Hb^3(S_r)$ is an isomorphism. 
\end{rep_thm2}

\begin{rep_thm3} \hypertarget{restriction_inj_bis}
For $r \geq r_0$, the restriction $\res_r: \Hb^3(\SL_{n(r)}(\bbC)) \to \Hb^3(S_r)$ is an isomorphism. 
\end{rep_thm3}

\section{Configuration spaces of isotropic points} \label{homog}

\subsection{The variety of isotropic points} \label{subsec:points}
We will consider the following generalized flag variety. %of the classical groups in consideration. 

\begin{defn} \label{isotropic_cone}
For any $r \geq r_1$, we call the irreducible complex projective variety \vspace{-3pt}
\[
	\clP_r = \clP_{r}^{\varepsilon,d} \coloneqq \{[v] \in \bbP(V_r) \mid q(v) = 0\} \vspace{-3pt}
\]
the \emph{variety of isotropic points} of $V_r$. By Witt's lemma \cite[Thm. 7.4]{Taylor}, it is a compact homogeneous $G_r$-space  when endowed with the Hausdorff topology of its set of $\bbC$-points, and the Lebesgue class is its unique $G_r$-invariant measure class. The assumption $r \geq r_1$ excludes two pathological cases: the set $\clP_0$ is empty, and $\clP_{1}^{(+1,0)}$ is not irreducible as an algebraic set.
\end{defn}

Let $Q_r$ be the stabilizer of $[e_r] \in \clP_r$, a maximal parabolic subgroup of $G_r$ with Levi decomposition $Q_r = U_r \rtimes R_r$, where $U_r$ is unipotent and $R_r$ reductive. Represented in the basis $\scB_r$, the reductive factor $R_r \cong \bbC^\times \times G_{r-1}$ consists of the block-diagonal matrices 
\begin{equation*} 
	  \begin{pmatrix}
			\lambda & 0 & 0 \\[-2pt]
			 0 & g & 0 \\[-2pt] 
			 0 & 0 & \lambda^{-1}
		\end{pmatrix} 
\end{equation*}
with $\lambda \in \bbC^\times$ and $g \in G_{r-1}$. If $Q_r^-$ denotes the stabilizer of $[f_r] \in \clP_r$, then $Q_r \cap Q_r^- = R_r$.

We are interested in parametrizing generic orbits of the diagonal $G_r$-action on product spaces $\clP_r^k$, for $k \in \{3,4,5\}$. Genericity is understood in the measure-theoretical sense, and therefore, the products $\clP_r^k$ will be regarded as objects in the category of  \emph{Lebesgue $G_r$-spaces}. 

\begin{defn}
 We call \emph{Lebesgue space} a standard Borel space $X$ endowed with the measure class of a Borel probability measure. If $X$ admits a Borel action of a lcsc group $G$, and the measure class is $G$-invariant, we call $X$ a \emph{Lebesgue $G$-space}. Morphisms of Lebesgue ($G$-) spaces are equivalence classes (up to null sets) of measure-class preserving Borel ($G$-)maps. 
\end{defn}

\begin{exmpl}
An algebraic $\bbC$-variety $X$ with the regular action of an algebraic group $G$ is a Lebesgue $G$-space if we endow $X$ with its Lebesgue measure class. The Borel $\sigma$-algebra of the set of orbits $G \lmod X$ with respect to the Hausdorff topology on $X$ is standard (see \cite[Thm. 3.1.3]{Zimmer}), and hence $G \lmod X$ is a Lebesgue space. 
\end{exmpl}

If $v_0,\ldots,v_k$ are all non-zero vectors in $V_r$, we will write $[v_0,\ldots,v_k]$ as a shorthand for the tuple $([v_0],\ldots,[v_k])$ of points in $\bbP(V_r)$. We will say the tuple is in general position if so are the vectors $v_0,\ldots,v_k$ (or any choices of lifts). We define the subsets 
\begin{align} 
	\clP_r^{(k+1)}\, &\coloneqq \left\{[v_0,\ldots,v_k] \in \clP_r^{k+1} \mid \omega(v_i,v_j) \neq 0 \ \ \forall \ i\neq j\right\},\label{Pk_parentheses} \\
	\clP_r^{\{k+1\}}&\coloneqq \left\{\vec{v} \in \clP_r^{(k+1)} \mid \vec{v} \mbox{ is in general position} \right\} \label{Pk_brackets}
\end{align}
Both are $G_r$-invariant, $\frS_k$-invariant\footnote{As customary in homological algebra, we adopt the convention that $0 \in \bbN$. The symmetric group of the set $\{0,\ldots,k\} \subset \bbN$ will be denoted by $\frS_k$.}, Zariski-open subsets of $\clP_r^{k+1}$. In particular, they are also Hausdorff-open and co-null. Moreover, if 
$
	\partial_i\colon\clP_r^{k+1} \to \clP_r^k
$
are the ($G_r$-equivariant) face maps that forget the $i$-th entry for $i \in \{0,\ldots,k\}$, then $\partial_i(\clP_r^{(k+1)}) = \clP_r^{(k)}$ and $\partial_i(\clP_r^{\{k+1\}}) = \clP_r^{\{k\}}$.

Our first observation is that there is only one generic $G_r$-orbit of triples of points in $\clP_r$. \vspace{-5pt}

\begin{lem} \label{3transitive}
The group $G_r$ acts transitively on $\clP_r^{\{3\}}$ for any $r \geq r_1$. Moreover, the point stabilizer $M_{r} < G_r$ admits a continuous surjection $M_{r} \twoheadrightarrow M'_r$ with solvable kernel, where
\[
	M'_{r} = \begin{cases}
		G_{r-1}^{(+1,0)} 	& \mbox{ if } (\varepsilon,d) = (+1,1), \\[-3pt]
		G_{r-2} 	& \mbox{ if } (\varepsilon,d) = (-1,0),\\[-3pt]
		G_{r-2}^{(+1,1)}	& \mbox{ if } (\varepsilon,d) = (+1,0).
	\end{cases}
\]
\end{lem}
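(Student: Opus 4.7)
My plan has two ingredients: Witt's extension theorem for transitivity, and an orthogonal decomposition of $V_r$ for analyzing the stabilizer. For transitivity, I would pick the model triple $u_0 = e_r$, $v_0 = f_r$, $w_0 = ae_r + bf_r + \tilde{w}$ with $\tilde{w} \in W^\perp$, where $W := \langle e_r, f_r\rangle$, and $(a, b, \tilde{w})$ are chosen so that $w_0$ is isotropic and pairs non-trivially with $u_0$ and $v_0$. The pairing conditions force $a, b \neq 0$; isotropy imposes $2ab + \omega(\tilde{w}, \tilde{w}) = 0$ in the orthogonal cases and is automatic in the symplectic case. A direct check shows that a $\tilde{w} \in W^\perp$ with the required (non-zero) square exists for every $r \geq r_1$---the cut-off $r_1 = 2$ in the $\mathbf{D}_r$ case is exactly what is needed to ensure $W^\perp = V_{r-1}^{(+1,0)}$ admits an anisotropic vector of the prescribed square. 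Given another triple $[u_1, v_1, w_1] \in \clP_r^{\{3\}}$, I would rescale its lifts to match the Gram matrix of $(u_0, v_0, w_0)$---the three multiplicative equations for the rescaling factors admit a solution because all three pairwise pairings are non-zero---and then apply Witt's extension theorem to the resulting linear isometry between the two non-degenerate 3-dimensional subspaces to obtain $g \in G_r$ carrying one triple to the other.

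For the stabilizer $M_r$, any $g \in M_r$ acts on $[u_0], [v_0], [w_0]$ by scalars $\alpha, \beta, \gamma \in \bbC^\times$; the isometry relations yield $\alpha\beta = \beta\gamma = \alpha\gamma = 1$, which force $\alpha = \beta = \gamma \in \{\pm 1\}$. Hence $g$ preserves $W$ (acting by $\pm I_W$) and also preserves $W^\perp$; the residual condition $g(w_0) = \pm w_0$ with matching sign is equivalent to $g(\tilde{w}) = \pm \tilde{w}$. Thus $M_r$ is a $\{\pm 1\}$-extension of $\stab_{G(W^\perp)}(\tilde{w})$, with the extension recording the simultaneous sign on $W$ and on $\tilde{w}$ and realized concretely by $-I_{V_r} \in G_r$. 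It remains to identify $\stab_{G(W^\perp)}(\tilde{w})$ in each of the three cases.

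In the $(+1,1)$ and $(+1,0)$ cases, $\tilde{w}$ is anisotropic, and Witt's classification of non-degenerate complex quadratic spaces identifies $\langle \tilde{w}\rangle^{\perp, W^\perp}$ with $V_{r-1}^{(+1,0)}$ and $V_{r-2}^{(+1,1)}$, respectively (matching dimensions and Witt indices). Hence $\stab_{G(W^\perp)}(\tilde{w}) \cong G_{r-1}^{(+1,0)}$ and $G_{r-2}^{(+1,1)}$, respectively, so the kernel of $M_r \twoheadrightarrow M'_r$ is at most $\{\pm 1\}$ and therefore solvable. In the symplectic case $(-1,0)$, $\tilde{w}$ is automatically isotropic and non-zero, and the stabilizer of a non-zero vector in $\Sp(W^\perp) = \Sp_{2(r-1)}(\bbC)$ is the preimage of $\{1\} \times \Sp_{2(r-2)}(\bbC)$ inside the maximal parabolic stabilizing $\langle \tilde{w}\rangle$; this is an extension of $G_{r-2}$ by the (solvable) unipotent radical, and composed with the $\{\pm 1\}$-factor from the $W$-sign it yields the required continuous surjection $M_r \twoheadrightarrow G_{r-2}$ with solvable kernel. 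The main obstacle is the Witt-index bookkeeping for the orthogonal complement of $\tilde{w}$ near the boundary $r = r_1$, where dimensions are small and the naive rank count must be checked against Witt's classification case by case; this is precisely what forces the case-dependent value of $r_1$.
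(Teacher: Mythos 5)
Your proposal follows essentially the same route as the paper: normalize to a model triple $[e_r,f_r,ae_r+bf_r+\tilde w]$ via Witt's extension theorem, then read off the stabilizer from the sign constraints on $W=\langle e_r,f_r\rangle$ and the orthogonal (resp.\ parabolic) decomposition of $W^\perp$ relative to $\tilde w$. The only difference is cosmetic---you apply Witt once to the full span of the triple after rescaling lifts to match Gram matrices, whereas the paper first moves the hyperbolic pair to $(e_r,f_r)$ and then applies Witt inside $W^\perp$ to the component $x$ of the third vector, using $q(x)=-(1+\varepsilon)$.

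One assertion in your transitivity step is false and should be repaired: in the symplectic case the $3$-dimensional span of a generic triple is \emph{never} non-degenerate, since the Gram matrix of three isotropic, pairwise non-orthogonal vectors is a $3\times 3$ alternating matrix and hence singular (its radical is one-dimensional); only for $\varepsilon=+1$ does the Gram determinant equal $2\,\omega(v_0,v_1)\omega(v_0,v_2)\omega(v_1,v_2)\neq 0$. This does not sink the argument, because the version of Witt's extension theorem cited in the paper \cite[Thm.~7.4]{Taylor} extends isometries between arbitrary (possibly degenerate) subspaces of a non-degenerate ambient space; but as written your appeal to Witt "between the two non-degenerate subspaces" does not cover the family $\mathbf{C}_r$, and you would either need to invoke the general form of the theorem or fall back on the paper's two-step normalization. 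Two smaller remarks: the "Witt-index bookkeeping" you flag as the main obstacle is vacuous over $\bbC$, where a non-degenerate quadratic space is determined up to isometry by its dimension alone, so the identifications $\langle\tilde w\rangle^{\perp}\cong V_{r-1}^{(+1,0)}$ and $V_{r-2}^{(+1,1)}$ are immediate; and your argument silently assumes $\dim W^\perp$ large enough for $\tilde w\neq 0$, so the rank-one cases ($(\varepsilon,d)=(-1,0)$ with $\tilde w=0$, and $(+1,1)$ with $W^\perp=\langle h\rangle$) deserve the separate one-line treatment the paper gives them.
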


\begin{proof}
For $r \geq r_1$, we set 
\begin{align} 
	\phi_2 &\coloneqq e_r + f_r + x_0 \in \clP_r \quad \mbox{where} \label{phi2} \\[-3pt]
	x_0 &\coloneqq \begin{cases} 
		0 & \mbox{if } (\varepsilon,d) = (-1,0) \mbox{ and } r = 1, \\[-4pt]
		{\scriptstyle \sqrt{-2}} \, h & \mbox{if } (\varepsilon,d) = (+1,1) \mbox{ and } r = 1, \\[-4pt]
		e_{r-1} - f_{r-1} & \mbox{otherwise,} 
		\end{cases} \nonumber 
\end{align}
We show that for any $\mathbf{p} = [v_0,v_1,v_2] \in \clP_r^{\{3\}}$, there exists $g \in G_r$ such that $g \,\mathbf{p} = [e_r,\ f_r, \ \phi_2]$. Indeed, since $\langle v_0,v_1 \rangle < V_r$ is hyperbolic and $(\bbC^\times)^2 = \bbC^\times$, there exists $g_1 \in G_r$ such that
\[
	g_1 \, [v_0,v_1,v_2] = [e_r,f_r,e_r+f_r+x] 
\]
for some $x \in V_{r-1} < V_r$. Here $x = 0$ if and only if $n(r) = 2$, which holds only when $(\varepsilon,d) = (-1,0)$ and $r = 1$. If $n(r) > 2$, then $q(x) = -(1+\varepsilon) = q(x_0)$.  Now, by Witt's lemma, we find $g_2 \in G_{r-1} < G_r$ such that $g_2 x = x_0$, proving the claim.

Up to index two, the stabilizer $M_r$ is isomorphic to the point stabilizer of the $G_{r-1}$-action on 
\[
	\clQ = \{x \in V_{r-1} \mid q(x) = -(1+\varepsilon)\} 
\]
If $\varepsilon = -1$, then $e_{r-1} \in \clQ$ and, hence, $M_r$ projects onto $G_{r-2}$ with solvable kernel. If $\varepsilon = +1$ and $x \in \clQ$ is arbitrary, the orthogonal decomposition $V_{r-1} = \langle x \rangle \oplus \langle x \rangle^\perp$ implies that $M_r$ is isomorphic to $\Aut(\langle x\rangle^\perp,\omega|_{\langle x \rangle^\perp})$ up to index two. This yields the claim, since $\langle x \rangle^\top$ is non-degenerate and of codimension one in $V_{r-1}$. 
\end{proof}

\subsection{Cross-ratio coordinates on configurations of 4- and 5-tuples}
We will describe the quotients $G_r\lmod\clP_r^4$ and $G_r\lmod\clP_r^5$ as Lebesgue spaces in terms of the \emph{$\omega$-cross-ratios}, introduced by Kor\'anyi and Reimann \cite{KR} in the realm of complex hyperbolic geometry; see e.g. \cite{Goldman}.

\begin{defn} \label{def:cartan_cr}
For any $\mathbf{p} = (p_0,p_1,p_2,p_3) \in \clP_r^{(4)}$ with $p_i = [v_i]$, the ratios
\begin{align*}
	\CR_0(\mathbf{p}) &\coloneqq \frac{\omega(v_0,v_2)\cdot \omega(v_1,v_3)}{\omega(v_0,v_3)\cdot \omega(v_1,v_2)}, \\
	\CR_1(\mathbf{p}) &\coloneqq \CR_0(p_1,p_2,p_0,p_3)^{-1}, \\
	\CR_2(\mathbf{p}) &\coloneqq  \CR_0(p_2,p_0,p_1,p_3),
\end{align*}
are independent of the choices of line representatives $v_i$, thus giving rise to the  $G_r$-invariant, holomorphic functions $\CR_j:\clP_r^{(4)} \to \bbC^\times$, called $\omega$\emph{-cross-ratios}. We will regard them as $G_r$-invariant morphisms  
$\CR_j\colon\clP_r^4\to \hat\bbC$ of Lebesgue spaces. We also write $\CR_j^{-1} \coloneqq 1/\CR_j$. 
\end{defn}

The next lemma explains dependencies between $\omega$-cross-ratios, including the effect of the permutation of their arguments. It shows that the only $\omega$-cross-ratios associated to any generic choice of four points in $\clP_r$ are $\CR_0$, $\CR_1$, $\CR_2$, and that any two of them determine the third one. Its proof is a verification left as an exercise to the reader. 

\begin{lem} \label{perm_4}
 The following identities hold everywhere in $\clP_r^{(4)}$: 
\begin{enumerate}[label=\emph{(\roman*)}]
	\item $\CR_0 \,\circ\, (01) = \CR_0 \,\circ\, (23) = \CR_1^{-1} \,\circ\, (02)  = \CR_1^{-1} \,\circ\, (13) = \CR_2 \,\circ\, (12) = \CR_2 \,\circ\, (03)= \CR_0^{-1}$.
	\item $\CR_1 \,\circ\, (12) = \CR_1 \,\circ\, (03) = \CR_2^{-1} \,\circ\, (01) = \CR_2^{-1} \,\circ\, (23) = \CR_1^{-1}$.
	\item $\CR_2 \,\circ\, (02) = \CR_2 \,\circ\, (13) = \CR_2^{-1}$.
	\item $\CR_0 \cdot \CR_1^{-1} \cdot \CR_2 = \varepsilon$.
\end{enumerate}
\end{lem}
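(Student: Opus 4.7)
The plan is to reduce every identity in (i)--(iv) to a direct manipulation of the six pairings $\omega(v_i, v_j)$ with $i < j$, which form a natural coordinate system on $\clP_r^{(4)}$ for this computation. The only property of $\omega$ that will be used beyond bilinearity is the $\varepsilon$-symmetry relation $\omega(v_j, v_i) = \varepsilon \,\omega(v_i, v_j)$.

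First I would derive closed-form product-quotient expressions for $\CR_0$, $\CR_1$, $\CR_2$ in which every pairing $\omega(v_i, v_j)$ has $i < j$. The formula for $\CR_0$ is already in that shape by \autoref{def:cartan_cr}. To obtain the corresponding formula for $\CR_1$, one substitutes the permuted arguments into the defining expression for $\CR_0$ (recalling $\CR_1(\mathbf{p}) = \CR_0(p_1, p_2, p_0, p_3)^{-1}$) and then applies $\varepsilon$-symmetry to each pairing with descending indices. The two resulting factors of $\varepsilon$ cancel, yielding
\[
\CR_1 \;=\; \frac{\omega(v_0, v_2)\,\omega(v_1, v_3)}{\omega(v_0, v_1)\,\omega(v_2, v_3)}.
\]
The analogous computation for $\CR_2 = \CR_0(p_2, p_0, p_1, p_3)$ produces a single, uncancelled factor of $\varepsilon$, namely
\[
\CR_2 \;=\; \varepsilon \cdot \frac{\omega(v_0, v_3)\,\omega(v_1, v_2)}{\omega(v_0, v_1)\,\omega(v_2, v_3)}.
\]

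With these three closed forms in hand, each identity in (i)--(iii) becomes a routine substitution: one permutes the arguments as prescribed, reorders each resulting pairing via $\varepsilon$-symmetry, and compares with the claimed right-hand side. Identity (iv) is then immediate from multiplying the three formulas: the numerator and denominator of $\CR_0 \cdot \CR_1^{-1} \cdot \CR_2$ each contain exactly one copy of every pairing $\omega(v_i, v_j)$ with $i < j$, so all pairings cancel and the only surviving factor is the $\varepsilon$ originating in $\CR_2$.

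The only subtle point---and hence the sole conceivable source of error---is the bookkeeping of $\varepsilon$-signs introduced by the reorderings of the arguments of $\omega$. Since each reordering contributes one factor of $\varepsilon$ and $\varepsilon^2 = 1$, the verification in each case reduces to a parity count of the pairings with descending indices, which poses no real obstacle. Consequently, I expect the proof to be a mechanical, purely combinatorial check, and no deeper structural input from the theory of formed spaces is needed beyond $\varepsilon$-symmetry itself.
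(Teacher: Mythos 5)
Your proposal is correct and is precisely the direct verification the paper intends (the paper explicitly leaves this lemma as an exercise): the closed forms $\CR_1 = \frac{\omega(v_0,v_2)\,\omega(v_1,v_3)}{\omega(v_0,v_1)\,\omega(v_2,v_3)}$ and $\CR_2 = \varepsilon\,\frac{\omega(v_0,v_3)\,\omega(v_1,v_2)}{\omega(v_0,v_1)\,\omega(v_2,v_3)}$ both check out against \autoref{def:cartan_cr}, and with them each identity in (i)--(iii) reduces to the $\varepsilon$-parity bookkeeping you describe, while (iv) follows from the total cancellation of all six pairings leaving the single $\varepsilon$ from $\CR_2$.
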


\begin{rem} \label{rem:classical}
Let us consider the rank-one examples (i.e. $r=1$) among the families treated in \autoref{new_stability}. If $(\varepsilon,d)=(-1,0)$, then the $\omega$-cross-ratios coincide with the classical cross-ratios on the complex projective line, since $G_1=\Sp_2(\bbC) = \SL_2(\bbC)$ and $\clP_1 = \bbP(\bbC^2)$. We will continue to refer to this as the \emph{classical setting}. \autoref{3transitive} recovers the well-known fact that any triple in $\clP_1^{\{3\}}$ is in the orbit of $(\infty,0,1) \in \hat\bbC^3$. As usual, we identify $\bbP(\bbC^2)$ with $\hat\bbC$ by 
\begin{equation} \label{eq:extended_complex}
	[f_1] \equiv \infty, \qand [e_1 + z \cdot f_1] \equiv z \ \mbox{for any } z \in \bbC. 
\end{equation}
Recall that for every $\mathbf{p} \in \clP_1^{\{4\}}$, the cross-ratio $\CR_0(\mathbf{p})$ is the only number $a \in \bbC\smallsetminus\{0,1\}$ such that $G_1 \cdot \mathbf{p} = G_1 \cdot (\infty,0,1,a)$. In the language of Lebesgue spaces, this means that the map $\CR_0$ descends to a isomorphism $G_1 \lmod \clP_1^4 \cong \hat\bbC$. In fact, due to the low dimension of $V_1$ in the classical setting, just one of the $\omega$-cross-ratios determines the value of the remaining two.

The only other rank-one example appears for $(\varepsilon,d) = (+1,1)$, which gives $G_1 = \OO_3(\bbC)$. As in the classical setting, we have an isomorphism $G_1\lmod \clP_1^4 \cong \hat\bbC$.
\end{rem}   

In contrast to the rank-one situation, one single $\omega$-cross-ratio does not suffice to parametrize generic 4-tuples in higher-rank. However, two of them are enough.

\begin{prop} \label{param4}
For any $r \geq 2$, the $G_r$-invariant morphism $\pi_3: \clP_r^4 \to \hat\bbC^2$ of Lebesgue spaces defined as
$
	 \pi_3 \coloneqq (\CR_1,\,\CR_2)
$
descends to an isomorphism $G_r\lmod \clP_r^4 \cong \hat\bbC^2$. 
\end{prop}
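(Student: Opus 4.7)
The plan is to use the transitivity of $G_r$ on triples from \autoref{3transitive} as a rigidifying reduction, then classify the residual $M_r$-orbits of the fourth point by a direct computation in the adapted basis.

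\textbf{Step 1 (reduction).} By \autoref{3transitive}, the conull $G_r$-invariant subset $\clP_r^{\{4\}} \subset \clP_r^4$ has the property that every $G_r$-orbit on it meets the Borel slice
\begin{equation*}
	\Sigma \coloneqq \left\{(p_0,p_1,p_2,p_3) \in \clP_r^{\{4\}} : (p_0,p_1,p_2) = ([e_r],[f_r],[\phi_2])\right\},
\end{equation*}
yielding an identification $G_r \lmod \clP_r^4 \cong M_r \lmod \clP_r^{(1)}$ as Lebesgue spaces, where $\clP_r^{(1)} \subset \clP_r$ is the open subset of admissible fourth points. Since $\pi_3$ is $G_r$-invariant it descends to a map on the quotient, and it suffices to show that this descended map is an isomorphism onto $\hat\bbC^2$.

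\textbf{Step 2 (coordinates and inversion).} I parametrize a representative of $[v] \in \clP_r^{(1)}$ as $v = \alpha e_r + \beta f_r + w$, where $w$ lies in the summand $V_{r-1} = \langle e_r, f_r\rangle^\perp$, and set $s \coloneqq \omega(x_0, w)$. Using $\omega(e_r, f_r) = 1$ and $\omega(f_r, e_r) = \varepsilon$, the relevant pairings are $\omega(v, e_r) = \varepsilon\beta$, $\omega(v, f_r) = \alpha$, $\omega(\phi_2, v) = \varepsilon\alpha + \beta + s$, from which \autoref{def:cartan_cr} gives
\begin{equation*}
	\CR_1 = \frac{\alpha}{\varepsilon(\varepsilon\alpha + \beta + s)}, \qquad \CR_2 = \frac{\beta}{\varepsilon\alpha + \beta + s}.
\end{equation*}
These are homogeneous of degree zero in $(\alpha,\beta,s)$ and admit an explicit rational inverse: upon normalizing $\varepsilon\alpha + \beta + s = 1$, one reads off $(\alpha, \beta, s) = (\varepsilon\CR_1, \CR_2, 1-\CR_1-\CR_2)$. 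Hence $[v] \mapsto (\alpha:\beta:s) \in \bbP^2$ and $(\alpha:\beta:s) \mapsto (\CR_1, \CR_2)$ are inverse rational maps, essentially bijective on Zariski-open subsets.

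\textbf{Step 3 ($M_r$-orbit classification).} The critical step is to show that, projectively, $(\alpha:\beta:s)$ is a complete invariant of $[v]$ under $M_r$. From the proof of \autoref{3transitive}, an index-at-most-two subgroup of $M_r$ fixes both $e_r$ and $f_r$ pointwise and acts on $V_{r-1}$ through the stabilizer of $x_0$ in $G_{r-1}$. By Witt's extension theorem, this stabilizer acts transitively on the common level sets of the linear functional $w \mapsto \omega(x_0, w)$ and the quadratic form $w \mapsto q(w)$ (with the obvious modification in the symplectic case, where $q$ is identically zero). Since the isotropy condition $q(v) = 0$ forces $q(w) = -(1+\varepsilon)\alpha\beta$, which is determined by $(\alpha, \beta)$, the sole remaining invariant of $w$ is $s$. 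Combined with Step 2, this produces the claimed isomorphism of Lebesgue $G_r$-spaces.

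\textbf{Main obstacle.} The delicate part is Step 3: the Witt-transitivity assertion must be checked separately for each family $(\varepsilon, d)$, with particular attention to the symplectic case (where $x_0$ is isotropic and the direct sum decomposition $V_{r-1} = \langle x_0 \rangle \oplus \langle x_0 \rangle^\perp$ fails) and to low-rank boundary cases, notably $r = r_1 = 2$ in the $\mathbf{D}_r$ family where the factor $M'_r$ in \autoref{3transitive} degenerates. Verifying that the index-$\leq 2$ ambiguity from \autoref{3transitive} preserves $(\alpha:\beta:s)$ also needs to be done, but should be straightforward since the corresponding coset representative (typically $-\id$) acts trivially on projective data.
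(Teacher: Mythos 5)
Your argument is correct, but it follows a genuinely different route from the paper's. You use the transitivity of $G_r$ on $\clP_r^{\{3\}}$ (\autoref{3transitive}) to reduce to classifying the orbits of the triple-stabilizer $M_r$ on the fourth point, and you identify the complete invariant $(\alpha:\beta:s)\in\bbP^2$ by combining the isotropy constraint $q(w)=-(1+\varepsilon)\alpha\beta$ with Witt's extension theorem inside $V_{r-1}$; the computation of $(\CR_1,\CR_2)$ then exhibits the two cross-ratios as an affine chart of that invariant. The paper never passes through the stabilizer: it writes down an explicit Borel section $\Phi_3(a_1,a_2)=[e_r,f_r,\phi_2,\phi_3(a_1,a_2)]$ with $\pi_3\circ\Phi_3=\id$, and, for an arbitrary tuple in the conull set $\clP_r^{\{\!\{4\}\!\}}$, constructs via perpendicular projections (\autoref{key lemma}) an adapted basis of $\langle v_0,v_1,v_2,v_3\rangle$ and hence a concrete $g\in G_r$ with $g\,\mathbf{p}=\Phi_3(\pi_3(\mathbf{p}))$. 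Your approach is conceptually lighter and explains transparently why exactly two cross-ratios suffice (two coefficients plus one residual Witt invariant $s$, taken projectively); the paper's approach buys the explicit vectors $e',f'$ and the functions $\varphi_\pm,\Delta^{1/2}$, which are reused verbatim in the five-point parametrization (\autoref{param5}) and ultimately in the higher Spence--Abel operator. Two points you should still write out to make the proof complete: (a) to obtain an isomorphism of Lebesgue spaces, and not merely an a.e.\ bijection of orbits, you need a measurable section, i.e.\ a Borel choice of $w\in V_{r-1}$ with prescribed $\omega(x_0,w)$ and $q(w)$ -- a linear-plus-quadratic system whose Borel solution is precisely what $\varphi_\pm$ accomplishes in the paper; and (b) the Witt step requires that $x_0\mapsto x_0$, $w\mapsto w'$ be a well-defined isometry of the spans, which holds only off a null set (e.g.\ $w\notin\langle x_0\rangle$) and should be flagged as a genericity assumption.
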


We explain now how the $\omega$-cross-ratios parametrize generic orbits of 5-tuples of points in $\clP_r$. 
\begin{defn} \label{alpha,beta,gamma}
For $j \in \{0,1,2\}$, we define morphisms $\alpha_j,\, \beta_j,\, \gamma_j\colon \clP_r^5 \to \hat\bbC$ 
\begin{align*} 
	\alpha_j(\mathbf{p}) &\coloneqq \CR_j \circ\,\partial_4(\mathbf{p}) = \CR_j(p_0,p_1,p_2,p_3) , \\[-3pt]
	\beta_j(\mathbf{p}) &\coloneqq \CR_j \circ\,\partial_3(\mathbf{p}) = \CR_j(p_0,p_1,p_2,p_4), \\[-3pt]
	\gamma_j(\mathbf{p}) &\coloneqq \CR_j \circ\,\partial_2(\mathbf{p}) = \CR_j(p_0,p_1,p_3,p_4),
\end{align*}
for $\mathbf{p}=(p_0,p_1,p_2,p_3,p_4) \in \clP_r^{(5)}$, where $\partial_i:\clP_r^5 \to \clP_r^4$ denote the usual face operators. As in \autoref{def:cartan_cr}, we set $\alpha_j^{-1} \coloneqq 1/\alpha_j$ and define similarly $\beta_j^{-1}$, $\gamma_j^{-1}$. 
\end{defn}

We can express in terms of the $\omega$-cross-ratios from \autoref{alpha,beta,gamma} all the remaining cross-ratios associated to a 5-tuple. This fact, recorded in the next lemma, is readily verified. 

\begin{lem} \label{lem:alpha,beta,gamma}
The following identities hold everywhere in $\clP_r^{(5)}$: \vspace{2pt}

\begin{minipage}[c]{0.36\linewidth}
\begin{enumerate}[label=\emph{(\roman*)}]
	\item $\CR_0\, \circ \, \partial_1 = \alpha_2 \beta_2^{-1}$
	\item $\CR_1\, \circ \, \partial_1 = \beta_1^{-1} \gamma_1$
	\item $\CR_2\, \circ \, \partial_1 = \varepsilon (\alpha_2^{-1} \beta_1^{-1} \beta_2\gamma_1)$
\end{enumerate}
\end{minipage}
\begin{minipage}[c]{0.36\linewidth}
\begin{enumerate}[label=\emph{(\roman*)}]
	\setcounter{enumi}{3}
	\item $\CR_0\, \circ \, \partial_0 = \alpha_1 \beta_1^{-1}$
	\item $\CR_1\, \circ \, \partial_0 = \varepsilon(\alpha_1 \alpha_2^{-1} \beta_1^{-1} \gamma_1)$
	\item $\CR_2\, \circ \, \partial_0 = \alpha_2^{-1} \gamma_1$
\end{enumerate}
\end{minipage}
\begin{minipage}[c]{0.28\linewidth}
\begin{enumerate}[label=\emph{(\roman*)}]
	\setcounter{enumi}{6}
	\item $\alpha_0 \gamma_0 = \beta_0$ \\ \\
\end{enumerate}
\end{minipage}
\end{lem}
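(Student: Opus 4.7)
The plan is to verify each of the seven identities by direct substitution into the defining formulas of \autoref{def:cartan_cr}. First I would fix lifts $v_0,\ldots,v_4 \in V_r$ of $p_0,\ldots,p_4$, introduce the shorthand $\omega_{ij} \coloneqq \omega(v_i,v_j)$ subject to the $\varepsilon$-symmetry $\omega_{ji} = \varepsilon\omega_{ij}$, and write each of the fifteen cross-ratio quantities appearing in (i)--(vii) --- namely $\alpha_j$, $\beta_j$, $\gamma_j$, $\CR_j\circ\partial_0$, $\CR_j\circ\partial_1$ for $j\in\{0,1,2\}$ --- as a single ratio of two degree-two monomials in the $\omega_{ij}$'s. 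Independence of the chosen lifts is automatic from \autoref{def:cartan_cr}.

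Each identity then reduces to a monomial identity. Statements (i), (ii), (iv), and (vii) involve only $\CR_0$ and $\CR_1$, so no signs intervene and the verification is a telescoping cancellation; for instance (vii) follows immediately from $\frac{\omega_{02}\omega_{13}}{\omega_{03}\omega_{12}}\cdot\frac{\omega_{03}\omega_{14}}{\omega_{04}\omega_{13}} = \frac{\omega_{02}\omega_{14}}{\omega_{04}\omega_{12}}$. Statements (iii), (v), (vi) involve $\CR_2$, which --- being defined via permutation of the arguments of $\CR_0$ --- contributes an extra factor of $\varepsilon$ when rewritten directly in terms of the $\omega_{ij}$ with $i<j$. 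The explicit $\varepsilon$ prefactor on the right-hand side of (iii) and (v) is precisely the sign correction reconciling these contributions; identity (vi) works out analogously because the two $\varepsilon$ factors coming from $\alpha_2^{-1}$ and from $\CR_2\circ\partial_0$ cancel against each other using $\varepsilon^2=1$.

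The main --- and essentially only --- obstacle is notational: keeping track of the numerator/denominator positions of the ten distinct $\omega_{ij}$'s ($0\leq i<j\leq 4$) across the different cross-ratios and signs. I would organize the computation as a short table listing the four $\omega_{ij}$'s appearing in each of $\alpha_j,\beta_j,\gamma_j,\CR_j\circ\partial_0,\CR_j\circ\partial_1$ and read off each identity by exponent matching. No appeal to genericity beyond $\omega_{ij}\neq 0$ for $i\neq j$ is required, which is precisely the defining condition of $\clP_r^{(5)}$ in \eqref{Pk_parentheses}, so the identities hold everywhere on that open set.
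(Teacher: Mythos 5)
Your direct verification in terms of the pairings $\omega_{ij}$ (with $\omega_{ji}=\varepsilon\,\omega_{ij}$) is correct and is precisely the computation the paper leaves to the reader---the lemma is introduced as ``readily verified''---and all seven identities do check out by the exponent-matching you describe, with the $\varepsilon$'s entering only through $\CR_2$. One minor slip in your narrative: identity (i) \emph{does} involve $\CR_2$ (via $\alpha_2$ and $\beta_2$), but the two resulting factors of $\varepsilon$ cancel in the product $\alpha_2\beta_2^{-1}$, so the conclusion is unaffected.
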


We conclude from Lemmas \ref{perm_4} and \ref{lem:alpha,beta,gamma} that at most five cross-ratios suffice to describe generic orbits of 5-tuples. The next theorem states that five are also necessary as long as $\dim(V_r)$ is large. Recall that $r_1$ was defined as a function of $(\varepsilon,d)$ at the end of Subsection \ref{subsec:notation}.

\begin{prop} \label{param5}
For any $r \geq r_1 + 1$, the $G_r$-invariant morphism $\pi_4\colon\clP_r^5 \to \hat\bbC^5$ of Lebesgue spaces defined as 
$
\pi_4 \coloneqq (\alpha_1,\,\alpha_2,\,\beta_1,\,\beta_2,\,\gamma_1) 
$
descends to an isomorphism $G_r\lmod\clP_r^5 \cong \hat\bbC^5$. 
\end{prop}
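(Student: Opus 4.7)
The plan is to mirror the proof template of \autoref{param4}: check that the induced map $\overline{\pi}_4: G_r \lmod \clP_r^5 \to \hat\bbC^5$ is (essentially) surjective and injective. The guiding intuition is that after \autoref{3transitive} normalizes the triple $(p_0,p_1,p_2)$, and after \autoref{param4} is used to describe both 4-sub-tuples $(p_0,p_1,p_2,p_3)$ and $(p_0,p_1,p_2,p_4)$ in terms of cross-ratios, one extra scalar suffices to record the position of $p_3$ relative to $p_4$, and $\gamma_1$ plays exactly that role. The relations in \autoref{lem:alpha,beta,gamma} are the algebraic reflection of this: they show that every cross-ratio of every 4-sub-tuple of $\mathbf{p}$ is a rational expression in $\alpha_1, \alpha_2, \beta_1, \beta_2, \gamma_1$.

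For surjectivity, I would first normalize $p_0 = [e_r]$, $p_1 = [f_r]$, $p_2 = [\phi_2]$ using \autoref{3transitive}, leaving the residual freedom $M_r \coloneqq \stab_{G_r}(p_0,p_1,p_2)$. By \autoref{param4}, the map $p \mapsto \bigl(\CR_1, \CR_2\bigr)(p_0,p_1,p_2,p)$ descends to a Lebesgue isomorphism $M_r \lmod \clP_r \cong \hat\bbC^2$; a measurable section of it produces $p_3 = p_3(a_1,a_2)$ achieving the prescribed values of $(\alpha_1,\alpha_2)$, and independently a candidate $p_4^{(0)} = p_4^{(0)}(b_1,b_2)$ achieving $(\beta_1,\beta_2)$. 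It then remains to translate $p_4^{(0)}$ within its $\stab_{G_r}(p_0,p_1,p_2,p_3)$-orbit so as to hit any prescribed value $c_1 \in \hat\bbC$ of $\gamma_1$; since this stabilizer still surjects, modulo solvable kernel, onto an automorphism group of a non-trivial formed space (this is where the hypothesis $r \geq r_1 + 1$ enters), I would verify by explicit computation along a one-parameter subgroup that $\gamma_1$ restricts non-constantly to that orbit.

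For injectivity, suppose $\mathbf{p}, \mathbf{p}' \in \clP_r^{(5)}$ satisfy $\pi_4(\mathbf{p}) = \pi_4(\mathbf{p}')$. Matching $(\alpha_1,\alpha_2)$ and \autoref{param4} yield $g \in G_r$ with $g \cdot (p_0,\ldots,p_3) = (p_0',\ldots,p_3')$; replace $\mathbf{p}$ by $g \cdot \mathbf{p}$ to assume equality of the first four entries. I now need $h \in \stab_{G_r}(p_0,p_1,p_2,p_3)$ with $h p_4 = p_4'$. Matching $(\beta_1,\beta_2)$ and \autoref{param4} produce $h'' \in \stab_{G_r}(p_0,p_1,p_2)$ with $h'' p_4 = p_4'$, while matching $\gamma_1$, combined with the identities in \autoref{lem:alpha,beta,gamma} and \autoref{perm_4} (which force all three $\omega$-cross-ratios of $(p_0,p_1,p_3,\cdot)$ to agree on $p_4$ and $p_4'$), yields via \autoref{param4} a further element $h' \in \stab_{G_r}(p_0,p_1,p_3)$ with $h' p_4 = p_4'$. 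The task is to coalesce $h'$ and $h''$ into one element of $\stab_{G_r}(p_0,p_1,p_2,p_3)$.

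The main obstacle will be precisely this coalescence. I would attack it by working inside the parabolic subgroup $\stab_{G_r}(p_0,p_1)$ (stabilizer of an isotropic line), whose reductive Levi decomposes as $\bbC^\times \times G_{r-1}$ acting on the orthogonal complement of $\langle p_0, p_1 \rangle$, a non-degenerate formed space of the same type. Under this decomposition, the problem of matching the projections of $p_2, p_3, p_4$ reduces to a transitivity statement in rank $r-1$ for triples of generic vectors on a prescribed quadric, which is a form of Witt's lemma, and the assumption $r \geq r_1+1$ ensures the residual rank is large enough for this transitivity to hold. I expect that spelling out this Levi-based reduction and one explicit cross-ratio calculation completes both the surjectivity and injectivity arguments simultaneously, and that the $\varepsilon$-dependence in \autoref{lem:alpha,beta,gamma}(iv) will enter only through a sign in the formulas but not through the structure of the argument.
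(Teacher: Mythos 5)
Your overall strategy (normalize three points, then control the fourth and fifth via stabilizers and cross-ratio relations) is in the right spirit, but the injectivity argument has a genuine gap at exactly the step you flag as "the main obstacle," and the proposed repair is not an argument yet. From $h''\in\stab_{G_r}(p_0,p_1,p_2)$ and $h'\in\stab_{G_r}(p_0,p_1,p_3)$ each sending $p_4$ to $p_4'$ one can conclude nothing about $\stab_{G_r}(p_0,p_1,p_2,p_3)$: two subgroups can each carry $p_4$ to $p_4'$ while their intersection carries it nowhere near $p_4'$. The actual content of the proposition is not a coalescence of group elements but a linear-algebraic statement about the vector $v_4$ itself: writing $W=\langle v_0,v_1,v_2,v_3\rangle$, one must show (a) that the five cross-ratios determine the projection $\mathrm{proj}_W(v_4)$ up to scalar, i.e.\ the four pairings of $v_4$ against an adapted basis $\{e_r',e_{r-1}',f_{r-1}',f_r'\}$ of $W$ built from $v_0,\ldots,v_3$ --- this is a nontrivial computation (the paper's \autoref{key lemma} and \autoref{continuation key lemma}, producing the expressions $\psi_{\pm}/\Delta^{1/2}$), not a formal consequence of \autoref{lem:alpha,beta,gamma}; and (b) that the residual component $x_0=v_4-\mathrm{proj}_W(v_4)\in W^\perp$ is then normalized by Witt's lemma inside $W^\perp$, using that $q(x_0)=-q(\mathrm{proj}_W(v_4))$ is forced by the isotropy of $v_4$. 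Step (b) is where the hypothesis $r\geq r_1+1$ actually enters (so that $W^\perp$ is large enough and the five lines are independent), not, as you suggest, in making $\gamma_1$ non-constant on a stabilizer orbit. Your Levi-based reduction "to a transitivity statement for triples on a quadric" gestures at this but skips both (a) and the degenerate-span issue that (b) resolves; without them the injectivity claim is unproved.

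Two smaller points. First, for surjectivity, non-constancy of $\gamma_1$ along a one-parameter subgroup of $\stab_{G_r}(p_0,p_1,p_2,p_3)$ gives at best a co-null image after an irreducibility argument; more importantly, an a.e.\ bijection on orbits does not by itself yield an isomorphism of Lebesgue spaces --- you still need a measurable inverse. The paper obtains both at once by constructing an explicit Borel section $\Phi_4:\Omega_4\to\clP_r^{\{\!\{5\}\!\}}$ (a normal form $[e_r,f_r,\phi_2,\phi_3(\mathbf a),\phi_4(\mathbf a,\mathbf b,c_1)]$ with an explicit isotropy-correcting term in $W^\perp$), which simultaneously proves essential surjectivity, essential injectivity, and measurability of the inverse. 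If you want to salvage your approach, the cleanest route is: use the cross-ratio relations to choose lifts with identical Gram matrices, observe that generic $5$-tuples are linearly independent when $r\geq r_1+1$, and invoke Witt's extension theorem directly --- but you would still need to produce a Borel section for the Lebesgue-space statement.
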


\begin{rem} \label{low_rank_cases}
In rank one, the isomorphism $G_1\lmod \clP_1^5 \cong \hat\bbC^2$ holds for $(\varepsilon,d) \in \{(+1,1),(-1,0)\}$. For instance, in the classical setting (see \autoref{rem:classical}), any $\mathbf{p} \in \clP_1^{\{5\}}$ is in the $G_1$-orbit of the tuple $(\infty,0,1,a,b) \in \hat\bbC^5$ with $a=\alpha_0(\mathbf{p})$ and $b=\beta_0(\mathbf{p})$. In rank two for $(\varepsilon,d)=(+1,0)$, the low dimension of $V_2$ yields one extra relation, which forces the isomorphism $G_2\lmod \clP_2^5 \cong \hat\bbC^4$. 
\end{rem}

Both Propositions \ref{param4} and \ref{param5} will be essential in our proof of \hyperlink{bootstrap}{Key Lemma}. Their proof, in turn, consist of elementary, yet fairly lengthy and technical computations. For the sake of readibility, we have opted to present them in Appendix \ref{app:computation}.

\section{Proof of Key Lemma} \label{sec:bootstrap}

\subsection{Cohomological characterization of $\ker \iota_r^\ast$}
For any $r \geq 1$, we consider the complex 
\begin{equation}\label{LinftyComplex}
	0 \to \Linfty(\clP_r) \xrightarrow{\dd_r^{0}} \Linfty(\clP_r^2) \xrightarrow{\dd_r^{1}} \Linfty(\clP_r^3) \rightarrow \cdots
\end{equation}
of dual Banach $G_r$-modules, with $G_r$-equivariant coboundary maps defined by the alternating sum $\dd_r^k \coloneqq \sum_{i=0}^{k+1} (-1)^i \cdot \partial^i$. Here $\partial^i$ is the operator induced by the face maps $\partial_i: \clP_r^{k+1} \to \clP_r^k$. The complex \eqref{LinftyComplex} is \emph{acyclic} in the sense that the homology of its augmented complex 
\begin{equation}\label{LinftyComplexAug}
	0 \to \bbR \xrightarrow{\dd_r^{-1}} \Linfty(\clP_r) \xrightarrow{\dd_r^{0}} \Linfty(\clP_r^2) \xrightarrow{\dd_r^{1}} \Linfty(\clP_r^3) \rightarrow \cdots
\end{equation}
vanishes in every degree. This follows from the observation that integration over the first variable of $\Linfty(\clP_r^{k+1})$ with respect to a quasi-invariant probability measure on $\clP_r$ is a contracting homotopy of \eqref{LinftyComplexAug}. We denote by $\Hb^q(G_r \curvearrowright \clP_r)$ the $q$-th homology of the complex $\Linfty(\clP_r^{\bullet+1})^{G_r}$ of $G_r$-invariants of \eqref{LinftyComplex}, and call it the \emph{bounded cohomology of the action} $G_r \curvearrowright\clP_r$. 

In the rank-one cases, the action $G_1 \curvearrowright \clP_1$ is amenable, and hence, there exists a canonical isomorphism
$
	\Hb^\ast(G_1) \cong \Hb^\ast(G_1 \curvearrowright \clP_1);
$
see \cite[Thm. 7.5.3]{Monod-Book}. Beyond rank one, the isomorphism needs not hold since the action is no longer amenable. However, the bounded cohomology of that action---or any other $G_r$-action on an acyclic complex---may still contain interesting information about $\Hb^\ast(G_r)$. The precise relationship between $\Hb^\ast(G_r)$ and $\Hb^\ast(G_r \curvearrowright \clP_r)$ is given by a spectral sequence \cite[Prop. 2.15]{DMH0}, with which we prove the following

\begin{lem} \label{kernel_H3}
For any $r \in \bbN$, there exists a linear isomorphism
$
\ker \iota_{r}^\ast \ \cong \ \Hb^3(G_{r+1} \curvearrowright \clP_{r+1}).
$
\end{lem}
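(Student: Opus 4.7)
The plan is to apply the spectral sequence of \cite[Prop.~2.15]{DMH0} to the $G_{r+1}$-action on $\clP_{r+1}$. That spectral sequence is built from the $G_{r+1}$-acyclic resolution \eqref{LinftyComplex}; it converges to $\Hb^{\ast}(G_{r+1})$, and its two rows compute, on one side, $\Hb^{\ast}(G_{r+1}\curvearrowright\clP_{r+1})$---the cohomology of the $G_{r+1}$-invariants of \eqref{LinftyComplex}---and, on the other, via an Eckmann--Shapiro identification applied to the transitive action $\clP_{r+1}=G_{r+1}/Q_{r+1}$, the bounded cohomology $\Hb^{\ast}(Q_{r+1})$ of the point stabilizer.

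The first step is to simplify $\Hb^{\ast}(Q_{r+1})$. Using the Levi decomposition $Q_{r+1}=U_{r+1}\rtimes R_{r+1}$ from Section~\ref{subsec:points}, amenability of the unipotent radical $U_{r+1}$ yields $\Hb^{\ast}(Q_{r+1})\cong \Hb^{\ast}(R_{r+1})$; the further splitting $R_{r+1}\cong\bbC^{\times}\times G_r$, together with the amenability of $\bbC^{\times}$, gives $\Hb^{\ast}(R_{r+1})\cong \Hb^{\ast}(G_r)$ in all positive degrees. The second step is to extract from the spectral sequence the five-term exact sequence in total degree~3, namely
\begin{equation*}
0 \to \Hb^{3}(G_{r+1}\curvearrowright\clP_{r+1}) \to \Hb^{3}(G_{r+1}) \xrightarrow{\rho_r} \Hb^{3}(Q_{r+1})\cong \Hb^{3}(G_r),
\end{equation*}
and to identify the edge homomorphism $\rho_r$ with $\iota_r^\ast$. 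This identification is a naturality check: the inclusion $\iota_r$ factors as $G_r\hookrightarrow R_{r+1}\hookrightarrow Q_{r+1}\hookrightarrow G_{r+1}$, and each of these restrictions is compatible with the edge homomorphism coming from the Shapiro isomorphism above. Taking kernels then yields $\ker\iota_r^{\ast}\cong \Hb^{3}(G_{r+1}\curvearrowright\clP_{r+1})$.

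The main obstacle is ensuring that the spectral sequence has enough low-degree vanishing to produce the clean five-term sequence above---that is, that no higher $d_k$-differentials contaminate the kernel of $\iota_r^\ast$ in total degree~3. This amounts to a careful control of $\Hb^{q}(G_{r+1}\curvearrowright\clP_{r+1})$ for $q\leq 2$; for $q=0$ the invariants reduce to the constants $\bbR$, while for $q=1,2$ one expects transparent behavior that can be deduced from the generic-orbit descriptions in \autoref{3transitive} and \autoref{param4}. A secondary technical point is that the Shapiro-type identification must be compatible at the cochain level with restriction along $\iota_r$, so that no spurious twist enters the identification $\rho_r=\iota_r^\ast$ and the sequence truly realizes the kernel of $\iota_r^\ast$ as the bounded cohomology of the action.
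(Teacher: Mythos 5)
Your overall strategy coincides with the paper's: both arguments run the spectral sequence of \cite[Prop.~2.15]{DMH0} associated to the acyclic complex \eqref{LinftyComplex}, identify the columns with bounded cohomology of stabilizers of generic tuples via Eckmann--Shapiro and the Levi decomposition of $Q_{r+1}$, and recognize the relevant edge/first-page differential as $\iota_r^\ast$ (the paper cites \cite[Lem.~3.7]{DMH0} for exactly the naturality check you describe). Whether one phrases the conclusion as a $\dd_4$-isomorphism $\EE_4^{0,3}\to\EE_4^{4,0}$ in the augmented, abutting-to-zero version (as the paper does) or as your short exact sequence in total degree three in the convergent version is immaterial.

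The genuine gap is in what you propose to verify to make the degree-three extraction work. The terms that could obstruct your five-term-type sequence are \emph{not} the bottom-row groups $\Hb^q(G_{r+1}\curvearrowright\clP_{r+1})$ for $q\le 2$ --- those only enter the analysis in total degrees $\le 2$ and are irrelevant here --- but the column terms $\EE_1^{p,q}\cong\Hb^q(\text{stabilizer of a generic }p\text{-tuple})$ for $q\in\{1,2\}$. Concretely, the differentials that could contaminate $\ker\iota_r^\ast$ land in or pass through subquotients of $\Hb^1$ and $\Hb^2$ of $Q_{r+1}$, $R_{r+1}$ and of the generic triple stabilizer $M_{r+1}$. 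Degree one vanishes for free, but the degree-two vanishing $\Hb^2(Q_{r+1})\cong\Hb^2(R_{r+1})\cong\Hb^2(G_r)=0$ is a substantive input: it is Burger--Monod's degree-two isomorphism theorem \cite[Lem.~6.1]{Burger-Monod1} combined with the fact that the groups $G_r^{\varepsilon,d}$ are not of Hermitian type. Your proposal never invokes this, and the substitute you offer --- deducing the needed vanishing from the orbit parametrizations of \autoref{3transitive} and \autoref{param4} --- computes the wrong objects: those results control the bottom row $\Hb^{\le 2}(G_{r+1}\curvearrowright\clP_{r+1})$, not the stabilizer cohomology in the columns. Relatedly, you stop your column identifications at the pair stabilizer $R_{r+1}$, whereas one also needs the triple-stabilizer column (hence the transitivity statement of \autoref{3transitive}) to control all terms contributing to total degree three. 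As written, the plan does not close; with the Burger--Monod input inserted, it becomes the paper's proof.
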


\begin{proof}
There exists a spectral sequence $\EE_\bullet^{\bullet,\bullet}$ that abuts to zero, with first-page terms 
\begin{equation*}
	\EE_1^{p,q} = \Hb^q(G_{r+1};\Linfty(\clP_{r+1}^{p})) 
\end{equation*}
 and differentials $\dd_1^{p,q}:\EE_1^{p,q}\to \EE_1^{p+1,q}$ induced by the operators $\dd_{r+1}^{p-1}$ \cite[Prop. 2.15]{DMH0}. Note first that $\EE_2^{p,0} \cong \Hb^{p-1}(G_{r+1} \curvearrowright \clP_{r+1})$ for all $p \geq 1$. Furthermore, as a consequence of the Eckmann--Shapiro lemma \cite[Prop. 10.1.3]{Monod-Book}, \autoref{3transitive}, and \cite[Corollary 8.5.2]{Monod-Book}, we have:
\begin{center}
\begin{tabular}{lcl} 
$\begin{array}{ll}
	\EE_1^{0,q} &= \Hb^q(G_{r+1}), \\[4pt]
	\EE_1^{1,q} &\cong \Hb^q(Q_{r+1}) \cong \Hb^q(G_{r}), \\[4pt]
	\EE_1^{2,q} &\cong \Hb^q(R_{r+1}) \cong \Hb^q(G_{r}), 
\end{array}$ && 
	$\EE_1^{3,q} \cong \Hb^q(M_{r+1})\cong \begin{cases}
			\Hb^q(G_{r}^{(+1,0)}) & \mbox{ if } (\varepsilon,d) = (+1,1), \\
			\Hb^q(G_{r-1}) & \mbox{ if } (\varepsilon,d) = (-1,0), \\
			\Hb^q(G_{r-1}^{(+1,1)}) & \mbox{ if } (\varepsilon,d) = (+1,0). 
		\end{cases}$
\end{tabular}
\end{center}
Here, $Q_{r+1}$, $R_{r+1}$, $M_{r+1}$ are as  defined in Subsection \ref{subsec:points}. For $p \leq 3$, we have
$
	\EE_2^{p,1}=\EE_1^{p,1} = 0$ 
and $\EE_2^{p,2}=\EE_1^{p,2} = 0.$
The latter equality holds from the isomorphism conjecture in degree two \cite[Lem. 6.1]{Burger-Monod1}. In fact, for any $(\varepsilon,d)$ as in \eqref{eq:eps_d} and any $r$, the group $G_r^{\varepsilon,d}$ is of non-Hermitian type, and therefore $\Hb^2(G_r^{\varepsilon,d}) \cong \HH^2(G_r^{\varepsilon,d})=0$. Finally, the map $\dd_1^{0,3}$ is conjugated by the isomorphisms above to the map $\iota_{r}^\ast$  (e.g. by \cite[Lem. 3.7]{DMH0}), and hence, $\EE_2^{0,3} \cong \ker \iota_{r}^\ast$. Summarizing, we have showed that the second page $\EE_2^{\bullet,\bullet}$ has terms as displayed below. \vspace{-5pt}
\begin{center}
\begin{tabular}{ p{10pt}|p{30pt} p{20pt} p{20pt} p{20pt} p{90pt}} 
  {\tiny 3}&$\ker \iota_{r}^\ast$ \\[-4pt] 
  {\tiny 2}&& 0 & 0 \\[-4pt]  
  {\tiny 1}&&& 0 & 0 \\[-4pt] 
  {\tiny 0}&&&&& $\Hb^3(G_{r+1}\curvearrowright \clP_{r+1})$ \vspace{3pt}\\[-2pt] 
 \hline
  &\quad {\tiny 0}&{\tiny 1}&{\tiny 2}&{\tiny 3}&\ \ \ \ \ \qquad{\tiny 4}
\end{tabular}
\end{center}
Blank spaces indicate that the terms are not relevant to our computation. 
Observe that all the displayed terms will remain unchanged until the fourth page $\EE_4^{\bullet,\bullet}$. Then, the arrow
\[
	\dd_4^{0,3}: \ker \iota_{r}^\ast \to \Hb^3(G_{r+1}\curvearrowright\clP_{r+1}) 
\]
must be an isomorphism, for otherwise the limits $\EE^3_\infty \neq 0$ or $\EE^4_\infty \neq 0$ would be non-zero. \end{proof}

\subsection{Secondary stability argument}
Propositions \ref{param4} and \ref{param5} enter the proof of the \hyperlink{bootstrap_bis}{Key Lemma \emph{bis}} in the form of the next statement, reminiscent of secondary stability in the sense of Galatius--Kupers--Randal-Williams \cite{Galatius}.

\begin{lem} \label{0row}
For every $r \geq r_1$, the linear isomorphism $\ker \iota_r^\ast \cong \ker \iota^\ast_{r_1}$ holds.
\end{lem}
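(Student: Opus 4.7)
The plan is to combine Lemma \ref{kernel_H3} with the concrete cross-ratio parametrizations from Section \ref{homog} to show that, for every $r \geq r_1$, the degree-three bounded cohomology $\Hb^3(G_{r+1} \curvearrowright \clP_{r+1})$ is computed by a subcomplex whose shape is independent of $r$. The secondary-stability flavor of the statement thus reduces to an explicit coordinate description of the relevant $G_{r+1}$-invariant pieces of the bar resolution on $\clP_{r+1}$.

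First, I would identify the three invariant spaces relevant to degree three via $\Linfty(\clP_{r+1}^{k+1})^{G_{r+1}} \cong \Linfty(G_{r+1} \lmod \clP_{r+1}^{k+1})$ for $k = 2, 3, 4$. By \autoref{3transitive}, the $G_{r+1}$-action on $\clP_{r+1}^3$ has a single co-null orbit, so $\Linfty(\clP_{r+1}^3)^{G_{r+1}} \cong \bbR$. Since $r+1 \geq r_1 + 1$, Propositions \ref{param4} and \ref{param5} supply the isomorphisms
\begin{equation*}
\Linfty(\clP_{r+1}^4)^{G_{r+1}} \cong \Linfty(\hat\bbC^2) \qand \Linfty(\clP_{r+1}^5)^{G_{r+1}} \cong \Linfty(\hat\bbC^5),
\end{equation*}
given by pullback along $\pi_3 = (\CR_1,\CR_2)$ and $\pi_4 = (\alpha_1,\alpha_2,\beta_1,\beta_2,\gamma_1)$. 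The three target spaces on the right are manifestly $r$-independent.

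Next, I would translate the coboundary maps $\dd^2$ and $\dd^3$ into explicit maps between these model spaces and verify that they do not depend on $r$. The coboundary $\dd^2 \colon \bbR \to \Linfty(\hat\bbC^2)$ vanishes, since the pullback of a constant along each of the four face maps $\clP_{r+1}^4 \to \clP_{r+1}^3$ yields the same constant, and the alternating sum is zero. For $\dd^3 \colon \Linfty(\hat\bbC^2) \to \Linfty(\hat\bbC^5)$, each summand $(-1)^i\partial^i$ corresponds to precomposing $f(\CR_1,\CR_2)$ with a rational map $\hat\bbC^5 \to \hat\bbC^2$ describing $(\CR_1 \circ \partial_i, \CR_2 \circ \partial_i)$ in terms of $(\alpha_1,\alpha_2,\beta_1,\beta_2,\gamma_1)$. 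For $i \in \{2,3,4\}$ this description is immediate from the very definition of $\alpha,\beta,\gamma$, and for $i \in \{0,1\}$ it is furnished by items (i)--(vi) of \autoref{lem:alpha,beta,gamma}, using item (vii) together with \autoref{perm_4}(iv) to eliminate $\alpha_0,\beta_0,\gamma_0,\gamma_2$. These rational formulas depend only on $\varepsilon$, not on $r$.

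Combining the two steps, the truncated subcomplex
\begin{equation*}
\Linfty(\clP_{r+1}^3)^{G_{r+1}} \xrightarrow{\dd^2} \Linfty(\clP_{r+1}^4)^{G_{r+1}} \xrightarrow{\dd^3} \Linfty(\clP_{r+1}^5)^{G_{r+1}}
\end{equation*}
is isomorphic to a fixed, $r$-independent complex for every $r \geq r_1$. Its middle cohomology, which is $\Hb^3(G_{r+1} \curvearrowright \clP_{r+1})$, is therefore $r$-independent, and \autoref{kernel_H3} upgrades this to the claimed $\ker \iota_r^\ast \cong \ker \iota_{r_1}^\ast$. The main point requiring care is ensuring that the cross-ratio coordinates genuinely intertwine the face operators on the product spaces with the concrete rational maps on the $\hat\bbC^m$ targets; this is automatic since both the $\omega$-cross-ratios and the face maps are constructed intrinsically from the form $\omega$, independently of $r$.
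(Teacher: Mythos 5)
Your proposal is correct and follows essentially the same route as the paper: reduce via \autoref{kernel_H3} to the action cohomology $\Hb^3(G_{r+1}\curvearrowright\clP_{r+1})$, use \autoref{3transitive} to kill the coboundaries, and use Propositions \ref{param4} and \ref{param5} together with \autoref{lem:alpha,beta,gamma} to conjugate $\dd^3_{r+1}$ to an explicit, $r$-independent operator on $\Linfty(\hat\bbC^2)\to\Linfty(\hat\bbC^5)$. This is precisely the paper's argument (which records the resulting operator as the higher-rank Spence--Abel map \eqref{higher_5term}).
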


\begin{proof}
In virtue of \autoref{kernel_H3}, we must prove the isomorphism
\[
	\Hb^3(G_{r+1} \curvearrowright \clP_{r+1}) \cong \Hb^3(G_{r_1+1} \curvearrowright \clP_{r_1+1}).
\]
By \autoref{3transitive}, the space of invariants $\Linfty(\clP_{r+1}^3)^{G_{r+1}}$ consists solely of constants, and hence, there are no coboundaries in $\Hb^3(G_{r+1} \curvearrowright \clP_{r+1})$. Let now $D_{r+1}^3$ be the bounded operator defined by the commutative diagram \vspace{-8pt}
\begin{equation*}
\xymatrixcolsep{3.5pc}
\xymatrixrowsep{2pc}
\xymatrix{\Linfty(\clP_{r+1}^4)^{G_{r+1}} \ar[r]^-{\dd_{r+1}^3} & \Linfty(\clP_{r+1}^5)^{G_{r+1}} \\
\Linfty(\hat\bbC^2) \ar@{-->}[r]^-{D_{r+1}^3} \ar[u]^{\pi_3^\ast}_\cong & \Linfty(\hat\bbC^5) \ar[u]_{\pi_4^\ast}^\cong}
\end{equation*}
Here, $\pi_3^\ast$ and $\pi_4^\ast$ are the isomorphisms induced by the morphisms of Lebesgue spaces from Propositions \ref{param4} and \ref{param5}, respectively. 
Observe that $\ker \dd_{r+1}^3 \cong \ker D_{r+1}^3$. 

The proof is completed upon showing that $D_{r+1}^3$ is independent of $r \geq r_1$. In fact, after applying \autoref{lem:alpha,beta,gamma}, the map $D_{r+1}^3$ can be expressed as \vspace{2pt}
\begin{equation} \label{higher_5term}
D^3_{r+1}f(\mathbf{x}) = f\!\left(\varepsilon\frac{a_1c_1}{a_2b_1}, \frac{c_1}{a_2}\right) - f\!\left(\frac{c_1}{b_1},\varepsilon \frac{b_2c_1}{a_2b_1}\right) + f\!\left(c_1,\varepsilon \frac{a_1b_2c_1}{a_2b_1}\right) - f(b_1,b_2) + f(a_1,a_2)\vspace{2pt}
\end{equation}
 for any $f \in \Linfty(\hat\bbC^2)$ and a.e. $\mathbf{x}=(a_1,a_2,b_1,b_2,c_1) \in \hat\bbC^5$.
\end{proof}

\begin{proof}[Proof of \hyperlink{bootstrap_bis}{Key Lemma \emph{bis}}]
The (weak) bc-stability of the complex classical families---proved in \cite[Thm. A]{DM+Hartnick}---implies that $\ker \iota_r^\ast = 0$ for large enough $r$. This fact combined with \autoref{0row} finishes the proof.
\end{proof}

In the classical setting (see \autoref{rem:classical}), the coboundary $\dd_1^3$ is conjugated to an operator 
$
D_1^3:\Linfty(\hat\bbC) \to \Linfty(\hat\bbC^2)
$
 in virtue of the isomorphisms discussed in Remarks \ref{rem:classical} and \ref{low_rank_cases}. The functional equation $D_1^3f = 0$ for $f \in \Linfty(\bbC)$ is known as the \emph{Spence--Abel functional equation}. Under a specific parametrization, it is given by \vspace{2pt}
\begin{equation} \label{spence_abel}
	f\!\left(\frac{a(1-b)}{b(1-a)}\right)
	-  f\!\left(\frac{1-b}{1-a}\right) +f\!\left(\frac{b}{a}\right) - f(b) +  f(a) = 0 \quad \mbox{ for a.e. } (a,b) \in \bbC^2,\vspace{2pt}
\end{equation}
and its only solution up to a constant factor is the \emph{Bloch--Wigner dilogarithm} $\clD$ (see e.g. \cite[\S 3]{Zagier}). The expression of $D_{\!\!\infty}^3 \coloneqq D_r^3$ given by \eqref{higher_5term} for any $r \geq r_1+1$ produces a higher-rank analogue of the Spence--Abel equation. A byproduct of the proof of the \hyperlink{bootstrap_bis}{Key Lemma} is the inexistence of non-trivial solutions for this equation.

\begin{cor} \label{thm:functionaleq}
Let $\varepsilon \in \{\pm 1\}$. If $f \in \Linfty(\hat\bbC^2)$ satisfies the functional equation $D_{\!\!\infty}^3f=0$ a.e., then $f$ is a.e. the identically zero function. \qed
\end{cor}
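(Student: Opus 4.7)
The plan is to observe that \autoref{thm:functionaleq} is essentially a translation of the Key Lemma into the language of functional equations, and thus nearly all the work has already been done. The statement for each value of $\varepsilon$ can be recovered by selecting a standing pair $(\varepsilon,d)$ as in \eqref{eq:eps_d}: for $\varepsilon=-1$ use $(\varepsilon,d)=(-1,0)$, and for $\varepsilon=+1$ any of the remaining two choices (e.g. $(+1,1)$) works, since the explicit formula \eqref{higher_5term} for $D^3_\infty$ depends only on $\varepsilon$.

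First I would recall, from the proof of \autoref{0row}, that by definition the operator $D^3_{r+1}$ fits into the commutative square with vertical arrows $\pi_3^\ast$ and $\pi_4^\ast$, which are isomorphisms by Propositions \ref{param4} and \ref{param5}. Consequently, for every $r\geq r_1$ one has
\begin{equation*}
\ker D^3_\infty \;=\; \ker D^3_{r+1} \;\cong\; \ker\dd^3_{r+1}\cap \Linfty(\clP_{r+1}^4)^{G_{r+1}}.
\end{equation*}

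Next I would argue that this kernel is in fact the full third bounded cohomology of the action $G_{r+1}\curvearrowright \clP_{r+1}$. Indeed, by \autoref{3transitive} the space $\Linfty(\clP_{r+1}^3)^{G_{r+1}}$ consists of constants, on which the coboundary $\dd^2_{r+1}=\sum_{i=0}^3(-1)^i\partial^i$ vanishes identically. Therefore there are no coboundaries in degree three, and
\begin{equation*}
\ker D^3_\infty \;\cong\; \Hb^3(G_{r+1}\curvearrowright \clP_{r+1}).
\end{equation*}

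Finally, I would invoke \autoref{kernel_H3} to identify $\Hb^3(G_{r+1}\curvearrowright \clP_{r+1})$ with $\ker \iota_r^\ast$, and then apply the \hyperlink{bootstrap_bis}{Key Lemma \emph{bis}} (already proved just above), which asserts $\ker\iota_r^\ast=0$ for all $r\geq r_1$. This chain of identifications forces $\ker D^3_\infty=0$, which is exactly the statement that the only a.e. $\Linfty$ solution of the higher-rank Spence--Abel equation $D^3_\infty f=0$ is zero.

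There is no real obstacle here beyond bookkeeping, since every piece is already in place; the only point to verify carefully is that the identifications $\pi_3^\ast$, $\pi_4^\ast$ of Propositions \ref{param4} and \ref{param5} carry the $\dd^3_{r+1}$-invariants on the geometric side precisely to the analytic operator whose kernel is described by \eqref{higher_5term} in coordinates $(a_1,a_2,b_1,b_2,c_1)$, and that the constant-only coboundary observation really does eliminate the quotient in the definition of $\Hb^3$.
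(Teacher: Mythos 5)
Your argument is correct and is precisely the chain of identifications the paper intends: the corollary is stated with \qed as a byproduct because $\ker D^3_\infty \cong \ker\dd^3_{r+1}\big|_{\mathrm{inv}} = \Hb^3(G_{r+1}\curvearrowright\clP_{r+1}) \cong \ker\iota_r^\ast = 0$, exactly as you write. Your remarks on the choice of $(\varepsilon,d)$ for each sign of $\varepsilon$ and on the vanishing of degree-three coboundaries are the right points to check, and both are handled correctly.
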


\section{Proof of Theorems 2 and 3 for the complex families $\mathbf{B}_r$ and $\mathbf{C}_r$} \label{sec:BC}

In Subsection \ref{subsec:assumptions}, we fixed a pair $(\varepsilon, d) \in \{(+1,1),(-1,0),(+1,0)\}$ as a standing assumption. In this section, we will only consider the cases $(+1, 1)$ and $(-1, 0)$, which correspond to the two complex classical families $\mathbf{B}_r$ and $\mathbf{C}_r$, respectively. In both cases, we have $r_0 = r_1 = 1$. 

We first prove that $\res_r: \Hb^3(\SL_{n(r)}(\bbC)) \to \Hb^3(S_r)$ is isometric for any $r \in \bbN$, which is the statement of \autoref{thm:norms}. We rely on the following result from \cite{BBI}. Below, we denote by $\Pi_n: \SL_2(\bbC) \to \SL_{n}(\bbC)$ the irreducible $n$-dimensional $\SL_2(\bbC)$-representation, and by $b_n$ the \emph{bounded Borel class}, defined in \cite{BBI}, which generates $\Hb^3(\SL_n(\bbC))$ as a vector space.  

\begin{thm}[{see \cite[Thm. 2]{BBI}}] \label{thm:irrep}
For any $n\geq 2$, the map $\Pi_n^\ast:\Hb^3(\SL_n(\bbC)) \to \Hb^3(\SL_2(\bbC))$ induced by $\Pi_n$ is an isometric isomorphism that maps the bounded Borel class $b_n$ to  
\[
	\Pi_n^\ast(b_n) = \frac{n(n^2-1)}6 \cdot b_2.
\]
In particular, the Gromov norm of $b_n$ is $\Vert b_{n} \Vert = n(n^2-1)/6 \cdot v_3,$ where $v_{\bbH^3} \approx 1.0149...$ is the maximal volume of an ideal tetrahedron in $\bbH^3$. 
\end{thm}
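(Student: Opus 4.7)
Since $\Hb^3(\SL_k(\bbC)) \cong \bbR$ for every $k \geq 2$ (a fact recalled in the introduction), the map $\Pi_n^\ast$ is linear between one-dimensional spaces. The whole theorem therefore reduces to two points: the \emph{algebraic identity} $\Pi_n^\ast b_n = \frac{n(n^2-1)}{6}\, b_2$, which is by itself enough to conclude that $\Pi_n^\ast$ is an isomorphism as soon as the coefficient is verified to be nonzero; and the \emph{norm identity} $\Vert b_n \Vert = \frac{n(n^2-1)}{6}\, \Vert b_2 \Vert$, which upgrades this to an isometry and, together with the classical $\Vert b_2 \Vert = v_3$, produces the stated Gromov norm.

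For the algebraic identity, the plan is to detour through continuous cohomology by means of the comparison isomorphism $c^3 : \Hb^3(\SL_n(\bbC)) \xrightarrow{\cong} \HH^3(\SL_n(\bbC))$. Under the van Est / Chern--Weil correspondence, the class $b_n$ transgresses to the second Chern class $c_2 \in H^4(B\SL_n(\bbC))$. The irreducible representation $\Pi_n = S^{n-1}V$ of the standard $\SL_2(\bbC)$-module $V$ carries weights $n-1, n-3, \dots, -(n-1)$, so the splitting principle gives
\[
        (B\Pi_n)^\ast c_2 \;=\; \sum_{i<j} w_i w_j \cdot t^2 \;=\; -\tfrac{1}{2}\sum_i w_i^2 \cdot t^2 \;=\; -\tfrac{n(n^2-1)}{6}\, t^2,
\]
using $\sum_i w_i = 0$, while $c_2(V) = -t^2$. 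The ratio $n(n^2-1)/6$ then transports back to $\Hb^3$ by naturality of the comparison map.

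For the norm side, the equality $\Vert b_2 \Vert = v_3$ is classical: $b_2$ is represented by the Bloch--Wigner dilogarithm $\mathcal{D}$ applied to cross-ratios of four points in $\partial\bbH^3 \cong \hat\bbC$, whose essential supremum is exactly the maximal ideal tetrahedron volume $v_3$. Combined with the algebraic identity, this gives $\Vert \Pi_n^\ast b_n \Vert = \frac{n(n^2-1)}{6}\, v_3$, and since any continuous homomorphism induces a norm non-increasing pullback on $\Hb^\ast$, we obtain the lower bound $\Vert b_n \Vert \geq \frac{n(n^2-1)}{6}\, v_3$ for free.

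The main obstacle is the matching upper bound $\Vert b_n \Vert \leq \frac{n(n^2-1)}{6}\, v_3$, which demands an explicit bounded representative of $b_n$ whose supremum attains this target with no slack. The natural tool is Goncharov's dilogarithmic cocycle for the Borel class of $\SL_n(\bbC)$, built as a weighted sum of Bloch--Wigner terms attached to flag/triangulation data on configurations of points in $\bbP^{n-1}(\bbC)$. The crux is to arrange the combinatorics so that the effective number of $\mathcal{D}$-summands is exactly $\binom{n+1}{3} = n(n^2-1)/6$ and no triangle-inequality loss occurs in the pointwise bound; this count matches the weight-pair enumeration from the Chern--Weil calculation above and forces sharpness in the norm.
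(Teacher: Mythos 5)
First, a contextual point: the paper does not prove \autoref{thm:irrep} at all --- it is quoted from \cite[Thm.\ 2]{BBI} and used as an external input --- so there is no internal proof to compare yours against. Judged on its own terms, your outline reconstructs the actual Bucher--Burger--Iozzi strategy quite faithfully. The coefficient is indeed the Dynkin index of $\Pi_n$, your weight computation $\sum_{i<j}w_iw_j=-\tfrac12\sum_i w_i^2=-\tfrac{n(n^2-1)}{6}$ is correct, and the lower bound $\Vert b_n\Vert\geq \tfrac{n(n^2-1)}{6}\,v_{\bbH^3}$ via the norm-non-increasing pullback $\Pi_n^\ast$ is exactly the argument used in \cite{BBI} (and reused in this paper's proof of \autoref{thm:norms}).

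There are, however, two genuine gaps. The decisive one is the upper bound, which you correctly flag as ``the main obstacle'' but then only describe rather than prove: ``arrange the combinatorics so that the effective number of $\clD$-summands is exactly $\binom{n+1}{3}$'' \emph{is} the theorem at this point. Concretely, one must construct the volume cocycle $B_n=\sum_J B_n^J$ on $\bbF_n^4$ (recalled in \autoref{borel cocycle}) and prove the vanishing-and-counting statement of \autoref{gen_continuity}: on the generic locus $\bbF_n^{\{4\}}$ the summand $B_n^J$ vanishes unless $\Vert J\Vert_1=n-2$, there are precisely $n(n^2-1)/6$ such $J$, and each surviving summand is a hyperbolic volume bounded by $v_{\bbH^3}$. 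That yields $\Vert B_n\Vert_\infty\leq \tfrac{n(n^2-1)}{6}\,v_{\bbH^3}$ and hence the upper bound; your concern about ``no triangle-inequality loss in the pointwise bound'' is a red herring, since the triangle inequality only ever produces an upper bound and sharpness is supplied by the independent lower bound. Without this construction the norm identity, and with it the isometry claim, is unproved. The second, smaller gap is in transporting the Chern--Weil ratio back to $\Hb^3$: one-dimensionality gives only that $\Pi_n^\ast b_n$ is \emph{some} nonzero multiple of $b_2$; to pin the multiple to $n(n^2-1)/6$ you must know that the comparison map sends the specific class $b_n=[B_n]$ to the Borel class with a normalization coherent in $n$ (compatible with the transgression of $c_2$ for all $n$ simultaneously). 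That identification is itself part of what \cite{BBI} establish and does not come for free from the definition of $b_n$ as a cocycle class.
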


Consider now the irreducible representation $\Pi_{n(r)}:\SL_2(\bbC) \to \SL(V_r) \cong \SL_{n(r)}(\bbC)$ for $r \geq 1$. The parity of $n(r)$ depends on the choice of $(\varepsilon,d) \in \{(+1,1),(-1,0)\}$. It is important to note that (up to conjugation) $\Pi_{n(r)}$ preserves the form $\omega_r$ on $V_r$, which implies that its image is contained in $S_r$. We will use the notation $\bar\Pi_r: \SL_2(\bbC) \to S_r$ for the co-restriction of $\Pi_{n(r)}$. 

\begin{proof}[Proof of \autoref{thm:norms}]
Note that $\Pi_{n(r)}^\ast = \bar\Pi_r^\ast \,\circ\, \res_r$. Since $\Hb^3(\SL_{n(r)}(\bbC))$ is generated by the bounded Borel class $b_{n(r)}$, it suffices to show the equality $\Vert \res_r(b_{n(r)}) \Vert = \Vert b_{n(r)} \Vert$. Indeed, in virtue of \autoref{thm:irrep} and the fact that both $\res_r$ and $\bar\Pi_r^\ast$ are norm non-increasing, we have
\[
	\| b_{n(r)} \| = \| \Pi_{n(r)}^\ast (b_{n(r)}) \| = \| \bar\Pi_{r}^\ast (\res_r(b_{n(r)})) \| \leq \| \res_r(b_{n(r)}) \| \leq \| b_{n(r)} \|,
\]
completing the proof.
\end{proof}

For $(\varepsilon,d)$ as in this section, the restriction map $\Hb^\ast(G_r) \to \Hb^\ast(S_r)$ is an isomorphism as a result of \eqref{eq:splitting} and \cite[Cor. 8.5.2]{Monod-Book}. This implies immediately that \hyperlink{bootstrap_bis}{Key Lemma \emph{bis}} holds \emph{verbatim} for $S_r$ instead of $G_r$ in these two cases: 
\begin{lem} \label{klem_BC}
For $r \geq r_1 = 1$, the induced map $\iota_r^\ast: \Hb^3(S_{r+1}) \to \Hb^3(S_r)$ is injective. \qed
\end{lem}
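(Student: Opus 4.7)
The plan is to reduce the claim to the Key Lemma \emph{bis}, exploiting the restriction isomorphism $\Hb^\ast(G_r) \xrightarrow{\cong} \Hb^\ast(S_r)$ noted just before the statement of \autoref{klem_BC}. The guiding idea is that restriction along $S_\bullet \hookrightarrow G_\bullet$ is natural with respect to the block embeddings $\iota_r$, so the already-established injectivity of $\iota_r^\ast$ at the level of $G$ transports directly to injectivity at the level of $S$.

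Concretely, I would first assemble the commutative square
\begin{equation*}
\begin{array}{ccc}
\Hb^3(G_{r+1}) & \xrightarrow{\ \iota_r^\ast\ } & \Hb^3(G_r) \\[2pt]
{\scriptstyle\cong}\big\downarrow & & \big\downarrow{\scriptstyle\cong} \\[2pt]
\Hb^3(S_{r+1}) & \xrightarrow{\ \iota_r^\ast\ } & \Hb^3(S_r)
\end{array}
\end{equation*}
whose commutativity is automatic from the functoriality of $\Hb^\ast$ combined with the compatibility of the two chains $S_r \hookrightarrow G_r \hookrightarrow G_{r+1}$ and $S_r \hookrightarrow S_{r+1} \hookrightarrow G_{r+1}$. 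The Key Lemma \emph{bis} yields injectivity of the top arrow, which then transfers to the bottom arrow. For the vertical isomorphisms, the case $(\varepsilon,d) = (-1,0)$ is immediate since $G_r = S_r$ by \eqref{eq:splitting}, while the case $(\varepsilon,d) = (+1,1)$ uses the direct-product decomposition $G_r \cong \{\pm I\} \times S_r$ from \eqref{eq:splitting}; the factor $\{\pm I\}$ is finite (hence amenable with trivial bounded cohomology in positive degrees), so \cite[Cor. 8.5.2]{Monod-Book} identifies $\Hb^\ast(G_r)$ with $\Hb^\ast(S_r)$ via restriction.

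I do not foresee any genuine obstacle: the substantive work is contained in the Key Lemma \emph{bis}, and the present lemma is a formal consequence. The only point deserving a brief check is that the splitting $G_r \cong \{\pm I\} \times S_r$ in the $\mathbf{B}_r$ case is stable under the block embeddings $\iota_r$, so that the square above really commutes; this is visible from the block-diagonal form of $\iota_r$ in the adapted basis $\scB_r$, where the central factor $\{\pm I\}$ corresponds to scalar multiplication by $\pm 1$ and is preserved in every rank.
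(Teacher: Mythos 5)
Your argument is correct and is exactly the paper's: the paper dispatches this lemma in one sentence by noting that \eqref{eq:splitting} together with \cite[Cor.~8.5.2]{Monod-Book} makes the restriction $\Hb^\ast(G_r)\to\Hb^\ast(S_r)$ an isomorphism, so the Key Lemma \emph{bis} transfers verbatim from $G_r$ to $S_r$. Your extra check that the square commutes (compatibility of the splitting with the block embeddings) is a harmless elaboration of what the paper leaves implicit.
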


Recall also that, depending on $(\varepsilon,d)$, the group $S_1$ equals $\Sp_2(\bbC)$ or $\SO_3(\bbC)$. The former group equals $\SL_2(\bbC)$ and the latter admits an isogeny $\SL_2(\bbC) \twoheadrightarrow \SO_3(\bbC)$, as seen in, e.g., \cite{vdWaerden,Tao}. Thus, $S_1$ is of type $\mathbf{A}_1$, and, in particular, $\dim \Hb^3(S_1) = 1$. 

\begin{proof}[Proof of \hyperlink{restriction_inj_bis}{Theorem \ref{restriction_inj} \emph{bis}} for $\mathbf{B}_r,\mathbf{C}_r$]
If $r \geq r_0 = 1$, then $n(r) \geq 2$, so that $\Vert b_{n(r)} \Vert > 0$. Together with \autoref{thm:norms}, this establishes the injectivity of $\res_r$. 
Surjectivity follows then from the fact that $\dim \Hb^3(S_r) \leq 1$ for all $r \geq 1$. This, in turn, is a consequence of \autoref{klem_BC} and the equality $\dim \Hb^3(S_1) = 1$. 
%In fact, $S_1^{(-1,0)} = \Sp_2(\bbC) = \SL_2(\bbC)$, and the group $S_1^{(+1,1)} = \SO_3(\bbC)$ admits an exceptional isogeny $\SL_2(\bbC) \twoheadrightarrow \SO_3(\bbC)$, as seen in \cite{vdWaerden,Tao}. This isogeny induces an isomorphism in bounded cohomology. 
\end{proof}

\begin{proof}[Proof of \hyperlink{new_stability_bis}{Theorem \ref{new_stability} \emph{bis}} for $\mathbf{B}_r,\mathbf{C}_r$]
\hyperlink{restriction_inj_bis}{Theorem \ref{restriction_inj} \emph{bis}} implies that $\dim\Hb^3(S_r) = 1$ for all $r\geq 1$. Thus, the induced maps $\iota_r^\ast: \Hb^3(S_{r+1}) \to \Hb^3(S_r)$---injective by \autoref{klem_BC}---are isomorphisms. %The mapping $\beta_{r+1} \mapsto \beta_r$ is now a direct consequence of the analogous fact for the bounded Borel class, stated in \autoref{old_stability}. 
\end{proof} 

\section{Proof of Theorems 2 and 3 for the complex family $\mathbf{D}_r$} \label{sec:D}

In this section, we consider the case $(\varepsilon,d) = (+1,0)$, which corresponds to the $\mathbf{D}_r$ family, and recall that $n(r)=2r$, $r_0=3$, and $r_1=2$. Note that the images of the irreducible representations $\Pi_{2r}:\SL_2(\bbC) \to \SL_{2r}(\bbC)$ are not contained in $S_r=\SO_{2r}(\bbC)$ for any $r$. This limitation prevents the adaptation of the argument in Section \ref{sec:BC} to the $\mathbf{D}_r$ family.

We will establish \hyperlink{restriction_inj_bis}{Theorem \ref{restriction_inj} \emph{bis}} by induction on the rank $r$ without relying on norm considerations. The induction will be based on the following statement. 

\begin{lem} \label{lem_ind_base}
The restriction map $\res_2:\,\Hb^3(\SL_4(\bbC)) \to \Hb^3(\SO_4(\bbC))$ is injective.
\end{lem}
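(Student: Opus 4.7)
Since $\Hb^3(\SL_4(\bbC)) \cong \bbR$ is one-dimensional and generated by the bounded Borel class $b_4$ (see \autoref{thm:irrep}), the injectivity of $\res_2$ is equivalent to the non-vanishing of $\res_2(b_4)$ in $\Hb^3(\SO_4(\bbC))$. The plan is to detect this non-vanishing by restricting further along a carefully chosen copy of $\SL_2(\bbC)$ sitting inside $\SO_4(\bbC)$, where $\Hb^3$ is one-dimensional and the bounded Borel class $b_2$ is well understood.

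The group $\SO_4(\bbC)$ is isogenous to $\SL_2(\bbC) \times \SL_2(\bbC)$ via the tensor-product representation $(A, B) \mapsto A \otimes B$ on $\bbC^4 \cong \bbC^2 \otimes \bbC^2$, with finite (hence compact) kernel $\{\pm(I,I)\}$. Taking the image of the first factor produces a subgroup $H < \SO_4(\bbC)$ locally isomorphic to $\SL_2(\bbC)$. After reordering the tensor basis as $(e_1\otimes e_1,\, e_2\otimes e_1,\, e_1\otimes e_2,\, e_2\otimes e_2)$, the composition $H \hookrightarrow \SO_4(\bbC) \hookrightarrow \SL_4(\bbC)$ is conjugate to the block-diagonal embedding
\[
	j\colon \SL_2(\bbC) \to \SL_4(\bbC), \quad g \mapsto g \oplus g.
\]
Using that restriction to $H$ followed by the finite-cover pullback is injective on $\Hb^3$ by \cite[Cor.\ 8.5.2]{Monod-Book}, the proof reduces to establishing the identity $j^\ast(b_4) = 2\, b_2$ in $\Hb^3(\SL_2(\bbC)) \cong \bbR$. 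This is the bounded-cohomological counterpart of the classical Whitney sum formula $c_2(V \oplus V) = 2\, c_2(V)$ for the standard $\SL_2(\bbC)$-representation $V$, where $c_1(V) = 0$ eliminates the cross term.

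The main obstacle is to upgrade the Whitney sum identity from ordinary continuous cohomology to continuous bounded cohomology, since the Borel class is specified by a concrete bounded cocycle whose behavior under direct sums is not automatic. Following the strategy attributed in the acknowledgments to Monod, I would transit through discrete bounded cohomology: restrict along a uniform lattice $\Gamma < \SL_2(\bbC)$ (for instance a Bianchi-type subgroup $\PSL_2(\clO_K)$ for an imaginary quadratic field $K$), where the induced map $\Hb^3(\SL_2(\bbC)) \to \Hb^3(\Gamma)$ is known to be injective. On the discrete side, the bounded Borel class admits an explicit representative via the Bloch--Wigner dilogarithm cocycle, and the Whitney sum identity can be verified as a genuine cochain-level equality by direct computation with the cocycle associated to $j|_\Gamma$. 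Transferring the resulting equation back through the injective restriction yields $j^\ast(b_4) = 2\, b_2 \neq 0$ in $\Hb^3(\SL_2(\bbC))$, and hence the desired injectivity of $\res_2$.
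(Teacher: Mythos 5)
Your reduction is sound: the tensor-product isogeny does place the block-diagonal embedding $j\colon \SL_2(\bbC)\to\SL_4(\bbC)$, $g\mapsto g\oplus g$, inside a conjugate of $\SO_4(\bbC)$, and since $\Hb^3(\SL_4(\bbC))=\langle b_4\rangle$ is one-dimensional, $j^\ast(b_4)\neq 0$ forces $\res_2(b_4)\neq 0$. (Your intermediate claim that restriction to $H$ is injective is false --- $\Hb^3(\SO_4(\bbC))$ is two-dimensional --- but the logic does not need it.) The identity $j^\ast(b_4)=2\,b_2$ is indeed true; the gap is that you never prove it. The phrase ``can be verified as a genuine cochain-level equality by direct computation'' is the entire content of the lemma: the cocycle $B_4$ is a sum of ten non-vanishing terms $B_4^J$ over index tuples with $\Vert J\Vert_1=2$, and after pulling back along the $j$-equivariant map $\bbP\to\bbF_4$, $p\mapsto(L_p\subset L_p\oplus L_p\subset\bbC^2\oplus L_p\subset\bbC^4)$, each term contributes $\pm\clD$ of some cross-ratio; only after Spence--Abel cancellations does the sum collapse to $2\,\clD\circ\CR_0$ rather than to $0$. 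This is exactly the kind of computation carried out in \autoref{non_zero}, and it cannot be waved through by analogy with the Whitney sum formula, because $b_4$ is specified by a particular cocycle, not by a Chern--Weil construction. Two further technical points are missing. First, the flag map above has null image in $\bbF_4$, so it induces no map of $\Linfty$-complexes; to work with it one must pass to discrete bounded cohomology \emph{and} invoke Monod's lifting theorem (\autoref{lifting}) to replace the a.e.-class $[B_4]$ by a strict representative agreeing with $B_4$ on its continuity locus. Second, a lattice $\Gamma<\SL_2(\bbC)$ is the wrong discretization: $\Gamma$ is countable, hence not transitive on generic triples in $\bbP^3$, so you cannot exclude that your pulled-back cocycle is an $\ell^\infty(\bbP^{\bullet+1})^\Gamma$-coboundary by the constancy argument; you should use $\SL_2(\bbC)^\delta$ instead, exactly as the paper uses $\SO_4(\bbC)^\delta$.

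That said, your target identity admits a proof that bypasses all cocycle work. The degree-three comparison maps of $\SL_2(\bbC)$ and $\SL_4(\bbC)$ are known isomorphisms (this is already used elsewhere in the paper), and comparison commutes with restriction; hence $j^\ast(b_4)=2\,b_2$ in $\Hb^3(\SL_2(\bbC))$ follows from the corresponding identity in $\HH^3$, where it is the additivity of the Dynkin index (your Whitney-sum heuristic made precise via van Est and $H^3$ of compact groups). With that substitution your argument closes and is genuinely shorter than the paper's proof of \autoref{lem_ind_base}; what it does not yield is the explicit value $B_4(F_\infty,F_0,F_1,F_{(a,b)})=2(\clD(a)+\clD(b))$, which the paper also needs for the norm computation in \autoref{norm_low}.
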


To prove \autoref{lem_ind_base}, we will realize $\res_2$ over the Furstenberg boundaries of the involved groups. Subsection \ref{subsec_cocycle} collects relevant background on the Goncharov--Bucher--Burger--Iozzi \emph{volume cocycle} of $\SL_n(\bbC)$, which represents the bounded Borel class $b_n \in \Hb^3(\SL_n(\bbC))$. Then, in Subsection \ref{furstenberg}, we will discuss the Furstenberg boundary of $\SO_4(\bbC)$.  

\subsection{The Goncharov--Bucher--Burger--Iozzi volume cocycle} \label{subsec_cocycle}
The content of this subsection is extracted entirely from the references \cite{Goncharov,BBI}. We abbreviate by $\bbP$ the complex projective line $\bbP(\bbC^2)$, isomorphic to the boundary of the hyperbolic 3-space $\bbH^3$. We let $\mathrm{Vol}_{\bbP}:\bbP^4 \to \bbR$ be the function that assigns to a 4-tuple $(p_0,p_1,p_2,p_3) \in \bbP^4$ the oriented volume of the ideal tetrahedron in $\bbH^3$ with vertices $p_i$. Up to rescaling, $\mathrm{Vol}_\bbP$ is the unique alternating, $\GL_2(\bbC)$-invariant, continuous bounded cocycle $\bbP^4 \to \bbR$%, to which we refer as the \emph{hyperbolic volume cocycle}
; see \cite{Bloch,Burger-Monod3}. 

We may write $\mathrm{Vol}_{\bbP}$ in terms of the Bloch--Wigner dilogarithm $\clD: \bbC \to \bbR$ (see \cite[\S 3]{Zagier}) as 
\begin{equation} \label{vol_dilog}
	\mathrm{Vol}_{\bbP} = \clD \, \circ \, \CR_0, 
\end{equation}
where $\CR_0$ is the classical cross-ratio on $\bbP$ (see \autoref{def:cartan_cr} and \autoref{rem:classical}). Under the identification $\bbP \cong \hat\bbC$ from \eqref{eq:extended_complex}, the equality \eqref{vol_dilog} corresponds to
$%\begin{equation*} %\label{bloch_wigner}
	\mathrm{Vol}_{\bbP}(\infty,0,1,z) = \clD(z).
$ %\end{equation*}
Thus, the cocycle equation for $\mathrm{Vol}_\bbP$ implies that $\clD$ satisfies Spence--Abel functional equation \eqref{spence_abel}, and the alternation of $\mathrm{Vol}_\bbP$ translates into the symmetries
\begin{equation} \label{symmetries_bw}
	\clD(z)=\clD\left(1-\frac1{z}\right) = \clD\left(\frac1{1-z}\right) = -\clD\left(\frac1{z}\right) = -\clD(1-z)=-\clD\left(\frac{-z}{1-z}\right). \vspace{3pt}
\end{equation}

A complete flag in $\bbC^n$ is a sequence $F^0 \!\subset\! F^1 \!\subset\! \cdots \!\subset\! F^{n-1} \!\subset\! F^{n}$ of subspaces $F^j \subset \bbC^n$ such that $\dim(F^j)=j$. The group $\SL_n(\bbC)$ acts transitively on the variety $\bbF_n$ of complete flags in $\bbC^n$, and the stabilizer of the standard flag $
	\{0\}\!\subset\! \langle e_1\rangle \!\subset\! \cdots \!\subset\! \langle e_1,\ldots,e_{n-1}\rangle \!\subset\! \bbC^n
$ is the upper-triangular subgroup $P_n < \SL_n(\bbC)$, a minimal parabolic subgroup. This identifies $\bbF_n$ with the Furstenberg boundary of $\SL_n(\bbC)$. Now, a complete \emph{affine} flag $(F,\vec{v})$ is a pair that consists of a complete flag $F$, and a tuple of spanning vectors $\vec v=(v^1,\ldots,v^{n})$ in the sense that \vspace{-3pt}
\[
	F^{j} = F^{j-1} \oplus \langle v^{j} \rangle \quad \mbox{for } j = 1,\ldots,n. \vspace{-3pt}
\] 
 We write $\bbF_{n,\mathrm{aff}}$ for the set of affine flags, which comes equipped with a projection $\bbF_{n,\mathrm{aff}} \twoheadrightarrow \bbF_n$.

\begin{defn} \label{def:sigma3}
Let $\sigma_3$ denote the collection of equivalence classes $[V;(v^0,\ldots,v^3)]$, where $V$ is a $\bbC$-vector space, $v^0,\ldots,v^3 \in V$ are vectors that span $V$. The equivalence $[V,(v^j)] \sim [W,(w^j)]$ holds if there exists a linear isomorphism
\begin{equation*} 
	\phi: V \xrightarrow{\cong \ } W \mbox{ with } \ \phi(v^j) = w^j \ \ \mbox{for all } j \in \{0,1,2,3\}.
\end{equation*}
Note that $\sigma_3$ is a set since every class $[V;(v^j)]$ admits a representative $[\bbC^m;(\bar{v}^j)]$ for $0 \leq m \leq 4$. 

Now, for any $J=(j_0,\ldots,j_3) \in \{0,\ldots,n-1\}^4$, we set $T_J:\bbF_{\mathrm{aff}}(\bbC^n)^4 \to \sigma_3$ as 
\begin{equation} \label{T_map}
	T_J\big((F_0,\vec v_0),\ldots,(F_3,\vec v_3)\big) \coloneqq \left[\frac{\langle F_0^{j_0+1},\ldots,F_3^{j_3+1}\rangle}{\langle F_0^{j_0},\ldots,F_3^{j_3}\rangle};\left(v_0^{j_0+1},\ldots,v_3^{j_3+1}\right)\right],
\end{equation}
where each $(F_i,\vec v_i) \in \bbF_{n,\mathrm{aff}}$ is such that $F_i = (F_i^0 \subset \cdots \subset F_i^n)$ and $\vec v_i = (v_i^1,\ldots,v_i^n)$, and the vectors in the right-hand side of \eqref{T_map} are regarded as classes modulo $\langle F_0^{j_0},\ldots,F_3^{j_3}\rangle$.
\end{defn}

\begin{defn} \label{borel cocycle}
Let $n \geq 2$, and let $\mathrm{Vol}: \sigma_3 \to \bbR$ be the map defined as 
\[
	\mathrm{Vol}\left[\bbC^m;(v^0,\ldots,v^3)\right] \coloneqq \begin{cases}
		\mathrm{Vol}_{\bbP}[v^0,\ldots,v^3] & \mbox{if } m = 2 \mbox{ and all } v^i \neq 0, \\[-3pt]
		0 & \mbox{otherwise}. 
	\end{cases}
\]
We let $\tilde B_n^J \colon \bbF_{n,\mathrm{aff}}^4 \to \bbR$ be the composition $\mathrm{Vol} \, \circ \, T_J$ for any $J \in \{0,\ldots,n-1\}^4$. This function is independent of the choices of spanning vectors, and thus, descends to a bounded Borel function $B_n^J: \bbF_n^4 \to \bbR$. Moreover, the sum \vspace{-4pt}
\[
	B_n \coloneqq \sum_{J} B_n^J \vspace{-4pt}
\]
descends to an alternating, $\GL_n(\bbC)$-invariant, bounded strict Borel cocycle $B_n:\bbF_n^4 \to \bbR$ (see {\cite[Cor. 13]{BBI}}). Observe that $B_2 = \mathrm{Vol}_{\bbP}$, so by analogy, we will call $B_n$ the (Goncharov--Bucher--Burger--Iozzi) \emph{volume cocycle}. The \emph{bounded Borel class} $b_n \in \Hb^3(\SL_n(\bbC))\cong \Hb^3(\GL_n(\bbC))$, mentioned in \autoref{thm:irrep} above, is defined as $b_n \coloneqq [B_n]$.
\end{defn}

Whilst $B_n$ is not everywhere continuous, it is so in the following generic locus. 

\begin{defn}
We say that a 4-tuple $(F_0,F_1,F_2,F_3) \in \bbF_n^4$ is in \emph{general position} if 
\[
	\dim\langle F_0^{j_0},\ldots,F_3^{j_3}\rangle = j_0 + \cdots + j_3
\]
as long as the right-hand side is at most $n$. We will denote by 
$\bbF_n^{\{4\}}$ the Zariski-open (hence Hausdorff-open and co-null) subset of $\bbF_n^4$ of 4-tuples in general position. 
\end{defn} 

\begin{lem} \label{gen_continuity}
For any $n \geq 2$ and $J =(j_0,\ldots,j_3) \in \{0,\ldots,n-1\}^4$, the restriction of $B_n^J$ to the open subset $\bbF_n^{\{4\}}$ is continuous, and vanishes identically unless the equality
\begin{equation} \label{norm1}
	\Vert J\Vert_1=j_0 + \ldots + j_3 = n-2
\end{equation}
holds. In particular, the volume cocycle $B_n:\bbF_n^4 \to \bbR$ is continuous on $\bbF_n^{\{4\}}$. 
\end{lem}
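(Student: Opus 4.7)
My plan is to split the two assertions---vanishing and continuity---and show that both follow from a single dimension count on the general-position locus $\bbF_n^{\{4\}}$. First I would compute, for any $(F_0,\ldots,F_3) \in \bbF_n^{\{4\}}$, the dimensions of the two spans featuring in the definition \eqref{T_map} of $T_J$:
\begin{equation*}
	\dim \langle F_0^{j_0}, \ldots, F_3^{j_3}\rangle = \min(\|J\|_1, n),\qquad \dim \langle F_0^{j_0+1}, \ldots, F_3^{j_3+1}\rangle = \min(\|J\|_1+4, n).
\end{equation*}
Writing $W$ and $V$ for these two spans, the dimension $m = \dim(V/W)$ is therefore constant on $\bbF_n^{\{4\}}$ and equals $4, 3, 2, 1, 0$ exactly according to whether $\|J\|_1$ belongs to $[\,0, n-4\,]$, $\{n-3\}$, $\{n-2\}$, $\{n-1\}$, or $[\,n, \infty)$ respectively. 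Since $\mathrm{Vol}[\bbC^m; (v^0, \ldots, v^3)]$ vanishes by \autoref{borel cocycle} unless $m = 2$, this already proves that $B_n^J$ vanishes on $\bbF_n^{\{4\}}$ whenever $\|J\|_1 \neq n-2$.

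For the surviving case $\|J\|_1 = n-2$, where $\dim V = n$ and $\dim W = n - 2$, I would deduce continuity as follows. The subspace $W$ varies continuously with $(F_0, \ldots, F_3)$ as an element of $\Gr_{n-2}(\bbC^n)$, so the quotient $\bbC^n / W \cong V/W$ defines a continuous rank-$2$ vector bundle over $\bbF_n^{\{4\}}$. Applying the general-position condition with a single incremented index yields $\langle v_i^{j_i+1}, W\rangle \neq W$, so each class $[\bar v_i^{j_i+1}]$ is a well-defined point of $\bbP(V/W) \cong \bbP$; applying it with two incremented indices at positions $i \neq j$ gives $\dim \langle v_i^{j_i+1}, v_j^{j_j+1}, W\rangle = n$, so any two of these projective classes are distinct. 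Consequently, $T_J$ lands continuously in the stratum of $\sigma_3$ of classes $[\bbC^2; (v^0, \ldots, v^3)]$ with pairwise-non-collinear $v^i$, where $\mathrm{Vol}$ coincides with $\mathrm{Vol}_\bbP$ under the identification $\bbP(V/W) \cong \bbP$. Continuity of $B_n^J$ then follows from the continuity of $\mathrm{Vol}_\bbP = \clD \circ \CR_0$ on the locus of pairwise-distinct 4-tuples in $\bbP^4$, using that the Bloch--Wigner dilogarithm extends continuously to $\hat\bbC$ with $\clD(0) = \clD(1) = \clD(\infty) = 0$ (the last limit being forced by $\clD(1/z) = -\clD(z)$ and $\clD(0)=0$). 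The final sentence of the lemma is an immediate consequence of $B_n = \sum_J B_n^J$.

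The main bookkeeping obstacle is verifying that, on the full locus $\bbF_n^{\{4\}}$, the four projective classes $[\bar v_i^{j_i+1}]$ are pairwise distinct---which is where the general-position hypothesis is genuinely used beyond the mere dimension count, and which ensures that $T_J$ never escapes into a degenerate stratum of $\sigma_3$ in which $\mathrm{Vol}$ could be discontinuous. Once this is in hand, the rest of the proof is the dimension tabulation above, combined with the standard continuity properties of the ideal-tetrahedron volume.
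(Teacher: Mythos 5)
Your argument is correct and is essentially the argument the paper has in mind: the paper's proof simply defers to the discussion preceding \cite[Lem.~16]{BBI}, which consists of exactly your dimension count showing $\dim(V/W)=2$ precisely when $\Vert J\Vert_1 = n-2$ (so that $\mathrm{Vol}$ vanishes otherwise), followed by the observation that in the surviving case $B_n^J$ reduces to $\mathrm{Vol}_\bbP$ evaluated on continuously varying points of $\bbP(V/W)$. Your additional verification that the four classes $[\bar v_i^{j_i+1}]$ are nonzero and pairwise distinct on $\bbF_n^{\{4\}}$ is a correct (if slightly more than strictly necessary, since $\mathrm{Vol}_\bbP = \clD\circ\CR_0$ is continuous on all of $\bbP^4$) way to ensure $T_J$ stays in a stratum of $\sigma_3$ where $\mathrm{Vol}$ is continuous.
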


\begin{proof}
See discussion preceding \cite[Lem. 16]{BBI}. The restriction $B_n^J\vert_{\bbF_n^{\{4\}}}$  reduces to a hyperbolic volume function $\mathrm{Vol}_{\bbP}$ when \eqref{norm1} holds, and vanishes otherwise. This implies the continuity statement. Note that there are precisely $n(n^2-1)/6$ tuples $J$ that fulfill \eqref{norm1}. 
\end{proof}

\subsection{The Furstenberg boundary of $\SO_4(\bbC)$} \label{furstenberg}

The content of this subsection is classical, and can be found e.g. in \cite[\S I.7]{vdWaerden}. We continue to abbreviate $\bbP \coloneqq \bbP(\bbC^2)$.
%Let us first consider the case of the group $\SO_3(\bbC)$. The variety $\clF_1 \coloneqq \clP_{1}^{(+1,1)}$ (see \autoref{isotropic_cone}) is the Furstenberg boundary of $\SO_3(\bbC)$. Indeed, the stabilizer of the isotropic point $[e_1] \in \clP_1$ equals the subgroup $P_3 \cap \SO_3(\bbC)$ of upper-triangular orthogonal matrices. Additionally, the exceptional isogeny $\theta_1: \SL_2(\bbC) \twoheadrightarrow \SO_3(\bbC)$ (e.g. in \cite{Tao}) descends to an isomorphism $\partial\theta_{1}: \bbP \to \clF_{1}$ of the corresponding Furstenberg boundaries as Lebesgue $\SL_2(\bbC)$-spaces. In coordinates with respect to the basis $\scB_1^{(+1,1)} = \{e_1,h,f_1\}$, it is given explicitly by the formula
%\begin{equation} \label{eq:partial_kappa_B}
%	\partial\theta_{1}(a) \coloneqq \big[1,{\scriptstyle \sqrt{-2}}\, a,a^2\big]^\top, \ \ a \in \hat\bbC,
%\end{equation}
%under the identification $\bbP \cong \hat\bbC$ (see \eqref{eq:extended_complex}). 
%In the case of $\SO_4(\bbC)$, we have the following interpretation of the Furstenberg boundary. 
 Observe first that $\SO_4(\bbC)$ acts on the variety of totally isotropic \emph{projective lines} in $\bbP(\bbC^4)$ (i.e. 2-dimensional linear subspaces of $\bbC^4$) with two orbits, $O^+$ and $O^-$. Respectively, these consist of elements 
\begin{align*}
	l^+_a &\coloneqq \big\{[1,-t,a,at]^\top \in \bbP(\bbC^4) \ \mid t \in \hat\bbC \big\} \in O^+, \\[-3pt]
	l^-_a &\coloneqq \big\{[1,a,-t,at]^\top \in \bbP(\bbC^4) \ \mid t \in \hat\bbC \big\} \in O^-,
\end{align*}
with $a \in \hat\bbC \cong \bbP$, given in coordinates with respect to the basis $\scB_2^{(+1,0)} = \{e_1,e_2,f_2,e_1\}$. These lines satisfy the properties \vspace{-4pt}
\begin{equation*} \begin{cases}
	l^+_a \cap l_{b}^+ \!\!\!\! &= l_a^- \cap l_{b}^- = \emptyset \quad \mbox{for } a \neq b, \\
	l^+_a \cap l_b^- \!\!\!\! &= [1,b,-a,ab]^\top.
\end{cases} \end{equation*}
The $\SO_4(\bbC)$-action on the product $\clF \coloneqq O^+ \times O^-$ is transitive, and the stabilizer of the pair $(l_0^+,l_0^-) \in \clF$ equals the subgroup of upper-triangular orthogonal matrices. Thus, we identify the Furstenberg boundary of $\SO_4(\bbC)$ with $\clF$. 

There exists an exceptional isogeny $\theta: \SL_2(\bbC) \times \SL_2(\bbC) \twoheadrightarrow \SO_4(\bbC)$ that descends to an isomorphism of Lebesgue $\SL_2(\bbC)\times \SL_2(\bbC)$-spaces, 
\begin{equation} \label{eq:partial_kappa_D}
	\partial\theta: \bbP \times \bbP \to \clF, \quad \partial\theta(a,b) \coloneqq (l_a^+,l_b^-), \ \ a,b \in \hat\bbC.
\end{equation}

The following embedding of the Furstenberg boundary $\clF$ into the full flag variety of $\bbC^4$ plays a key role in our proof of \autoref{lem_ind_base}. 

\begin{defn} \label{rho_F}
Let $\rho: \clF \to \bbF_4$ be the $\SO_4(\bbC)$-equivariant embedding defined as
\begin{align*}
%	\rho_{1}(p) &\coloneqq \big(\{0\} \subset p \subset p^\perp \subset \bbC^3\big),  \nonumber \\
	\rho(l^+,l^-) &\coloneqq \big(\{0\} \subset \ l^+ \cap l^- \ \subset \ l^+ \ \subset l^+ + l^- \ \subset \ \bbC^4\big).\label{xi_d}
\end{align*}
An analogous map can be obtained if we choose $l^-$ instead of $l^+$ as the 2-dimensional subspace. 
\end{defn}

\begin{rem} \label{rem:not_mcp}
The image of $\rho$ is a null set. In particular, $\rho$ does not preserve measure classes.  
\end{rem}

\subsection{Generic values of the volume cocycle of $\SO_4(\bbC)$}
We introduce a notion of genericity for tuples of points in the Furstenberg boundary $\clF$. 

\begin{defn}
For any $k \in \bbN$, we set
\begin{align*}
	%\clF_{1}^{\{k+1\}} &\coloneqq \big(\clP_{1}^{(+1,1)}\big)^{\{k+1\}}, \\[-3pt]
	\clF^{\{k+1\}} &\coloneqq \partial\theta^{\,\times(k+1)} \big(\bbP^{\{k+1\}} \times \bbP^{\{k+1\}}\big) \subset \clF^{k+1},
\end{align*}
where the brackets in the upper index of $\bbP$ refer to the generic subsets \eqref{Pk_brackets} in the classical setting. The product $\bbP^{\{k+1\}} \times \bbP^{\{k+1\}}$ is regarded as a subset of $(\bbP \times \bbP)^{k+1}$ in the obvious way. 
\end{defn}

\begin{lem} \label{SG transitive}
The following statements hold:
\begin{enumerate}[label=\emph{(\roman*)},leftmargin=25pt]
\item The set $\clF^{\{k+1\}}$ is an $\SO_4(\bbC)$-invariant, co-null Borel subset of $\clF^{k+1}$, and the equality $\partial_i(\clF^{\{k+1\}}) = \clF^{\{k\}}$ holds for every $i \in \{0,\ldots,k+1\}$. 
\item The action $\SO_4(\bbC)\curvearrowright \clF^{\{3\}}$ is transitive.
\item The image $\rho^{\, \times 4}(\clF^{\{4\}})$ is contained in the set $\bbF_4^{\{4\}}$ of 4-tuples in general position. 
\end{enumerate}
\end{lem}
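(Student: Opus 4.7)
The plan is to deduce each of the three parts from the corresponding properties of the generic loci $\bbP^{\{k+1\}}$ in the classical setting of Section \ref{homog}, transferred to $\clF$ through the equivariant isomorphism of Lebesgue spaces $\partial\theta:\bbP \times \bbP \to \clF$ from \eqref{eq:partial_kappa_D} and the isogeny $\theta: \SL_2(\bbC)\times \SL_2(\bbC) \twoheadrightarrow \SO_4(\bbC)$.

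For (i), recall that $\bbP^{\{k+1\}}$ is Zariski-open (hence Hausdorff-open), co-null, and $\SL_2(\bbC)$-invariant in $\bbP^{k+1}$, and that face maps satisfy $\partial_i(\bbP^{\{k+1\}}) = \bbP^{\{k\}}$. After reindexing coordinates, the product $\bbP^{\{k+1\}} \times \bbP^{\{k+1\}}$ inherits the analogous properties as a subset of $(\bbP \times \bbP)^{k+1}$ relative to the product action of $(\SL_2(\bbC))^2$, and face maps commute with the factor-wise decomposition. Applying $(\partial\theta)^{\times (k+1)}$---an isomorphism of Lebesgue $(\SL_2(\bbC))^2$-spaces that descends via $\theta$ to an isomorphism of Lebesgue $\SO_4(\bbC)$-spaces---transfers these properties to $\clF^{\{k+1\}}$.

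For (ii), the rank-one case of \autoref{3transitive}---equivalently, the classical $3$-transitivity of Möbius transformations on $\hat\bbC$---gives that $\SL_2(\bbC) = G_1^{(-1,0)}$ acts transitively on $\bbP^{\{3\}}$. Hence $(\SL_2(\bbC))^2$ acts transitively on $\bbP^{\{3\}} \times \bbP^{\{3\}}$, and equivariance of $(\partial\theta)^{\times 3}$ together with surjectivity of $\theta$ produces the desired transitivity of $\SO_4(\bbC)$ on $\clF^{\{3\}}$.

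For (iii), the main work is computational. Using (ii) together with the 3-transitive action of each $\SL_2(\bbC)$-factor on $\bbP^{\{3\}}$, I would normalize to 4-tuples of the form $\bigl(\partial\theta(\infty,\infty),\, \partial\theta(0,0),\, \partial\theta(1,1),\, \partial\theta(a,b)\bigr)$ with $a,b \in \hat\bbC \setminus \{0,1,\infty\}$ arbitrary (and independent), reducing the entire claim to the analysis of a two-parameter family of explicit complete flags
\[
	\rho(l^+_{a_i}, l^-_{b_i}) \,=\, \bigl(0 \subset p_i \subset l^+_{a_i} \subset l^+_{a_i} + l^-_{b_i} \subset \bbC^4\bigr), \qquad p_i \coloneqq l^+_{a_i} \cap l^-_{b_i}.
\]
For every multi-index $J=(j_0,j_1,j_2,j_3) \in \{0,1,2,3\}^4$ with $\lVert J \rVert_1 \le 4$, I would verify the dimension equality
\[
	\dim \bigl\langle F_0^{j_0}, F_1^{j_1}, F_2^{j_2}, F_3^{j_3} \bigr\rangle \,=\, \lVert J \rVert_1
\]
by direct rank computation, relying on the following three structural inputs from Subsection \ref{furstenberg}: $l^+_a \cap l^+_b = 0$ for $a\neq b$, $l^-_a \cap l^-_b = 0$ for $a \neq b$, and $\dim(l^+_a \cap l^-_b) = 1$ always. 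The main obstacle is the case analysis: each multi-index $J$ yields an elementary Vandermonde-type rank computation in the parameters $a_i, b_i$, but to conclude we must check that the pairwise distinctness of the $a_i$'s together with that of the $b_i$'s---which is exactly what membership in $\bbP^{\{4\}} \times \bbP^{\{4\}}$ encodes---suffices in every case to prevent the expected dimension from dropping.
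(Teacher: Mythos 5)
Your handling of (i) and (ii) coincides with the paper's: both statements are transferred from the corresponding properties of $\bbP^{\{k+1\}}$ through the equivariant isomorphism $\partial\theta$ and the surjectivity of $\theta$, and there is nothing to add there. For (iii), your preliminary normalization of the first three entries to $\bigl(\partial\theta(\infty,\infty),\partial\theta(0,0),\partial\theta(1,1)\bigr)$ is legitimate (general position of flags is a $\GL_4(\bbC)$-invariant condition, $\rho$ is equivariant, and (i)--(ii) supply the transitivity), and it is a genuine simplification over the paper, which runs the dimension counts directly for arbitrary $(\mathbf{a},\mathbf{b})\in\bbP^{\{4\}}\times\bbP^{\{4\}}$ using the intersection pattern of the two rulings.

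The verification you defer, however, is exactly where the difficulty sits, and for one multi-index it fails, so the proposal cannot be completed as written. For $J=(1,1,1,1)$ the required equality $\dim\langle p_0,p_1,p_2,p_3\rangle=4$ does not follow from the pairwise distinctness of the $a_i$ and of the $b_i$: in your normalized family take $a=b\in\bbC\smallsetminus\{0,1\}$, so that the four points $p_i=[1,b_i,-a_i,a_ib_i]^\top$ with $(a_i,b_i)=(\infty,\infty),(0,0),(1,1),(a,a)$ all lie in the hyperplane $\{x_2+x_3=0\}$ and span only a $3$-dimensional subspace. Geometrically, the pairwise hypotheses exclude every degeneration except the one where the four points of the quadric lie on a plane conic section, and that one they do not exclude; every other multi-index with $\Vert J\Vert_1\leq 4$ does check out (for instance, three pairwise distinct points of the quadric can only be collinear if the line is a ruling line, which would force two $a_i$'s or two $b_i$'s to coincide). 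To be fair, the paper's own proof displays only the pairwise cases and declares the rest ``completely analogous'', which the case $J=(1,1,1,1)$ is not; the statement should either exclude that multi-index or shrink $\clF^{\{4\}}$ by the (closed, null) coplanarity locus. Either fix is harmless for the later applications in \autoref{non_zero} and \autoref{lem_ind_base}, which only need continuity and non-vanishing of $B_4$ on a co-null invariant subset, but your write-up should identify the failure rather than assert that the distinctness hypotheses suffice ``in every case''.
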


\begin{proof}
%\emph{Case $(d,n)=(1,3)$}. Item (i) is clear, and the transitivity of $\SO_3(\bbC) \curvearrowright \clF_1^{\{3\}}$ was proved in \autoref{3transitive}. For (iii), let $(p_0,\ldots,p_3) \in \clF_{1}^{\{4\}}$. Then, for distinct $i,j \in \{0,1,2,3\}$, we have $\dim(p_i \cap p_j^\perp) = 0$ since $\omega(p_i,p_j) \neq 0$. This implies that
%\begin{equation*}
%	\dim(p_i + p_j^\perp) = \dim p_i + \dim (p_j^\perp) = 1 + 2 = 3,
%\end{equation*}
%proving that $\rho_{1}^{\,\times 4}(p_0,\ldots,p_3) \in \bbF_3^{\{4\}}$. 

%\emph{Case $(d,n) = (0,4)$}. 
Item (i) follows from the fact that $\partial\theta$ is an isomorphism of Lebesgue $\SL_2(\bbC) \times \SL_2(\bbC)$-spaces, and $\bbP^{\{k+1\}} \times \bbP^{\{k+1\}}$ is invariant, Borel and co-null in $(\bbP\times\bbP)^{k+1}$. Item (ii) is a consequence of the transitivity of the action $\SL_2(\bbC) \curvearrowright \bbP^{\{3\}}$. For (iii), we let $\mathbf{a}=(a_0,\ldots,a_3)$ and $\mathbf{b}=(b_0,\ldots,b_3) \in \bbP^{\{4\}}$. The genericity of these tuples means that their entries are pairwise distinct when regarded under the identification $\bbP\cong \hat\bbC$. Set $l_i^+ \coloneqq l_{a_i}^+$ and $l_i^- \coloneqq l_{b_i}^-$, so that \vspace{-3pt}
\begin{equation*}
\big((l_{0}^+,l_{0}^-),\ldots,(l_{3}^+,l_{3}^-)\big)= (\partial\theta_{0})^4(\mathbf{a},\mathbf{b}) \in \clF^4.
\end{equation*}
We claim that the image under $\rho^{\,\times 4}$ of the tuple $((l_{0}^+,l_{0}^-),\ldots,(l_{3}^+,l_{3}^-))$ is in general position. Indeed, if we set $p_i \coloneqq l_{i}^+ \cap l_{i}^- \in \clP_{2}$, then, for instance, for distinct $i,j \in \{0,1,2,3\}$, we have
\begin{align*}
	\dim(p_i + p_j) &= 1+1-\dim(l_{i}^+\cap l_{i}^-\cap l_{j}^+\cap l_{j}^-) = 2,\\
	\dim(p_i +l_{j}^+) &= 1+2-\dim(l_{i}^+\cap l_{i}^-\cap l_{j}^+) = 3, \\
	\dim(p_i +(l_{j}^+ + l_{j}^-)) &= 1 + 3 - \dim(l_{i}^+\cap l_{i}^-\cap(l_{j}^+ + l_{j}^-)) = 4,
\end{align*}
where we used that $l_i^+ \cap l_j^+ = l_i^- \cap l_j^- = \{0\}$ and
\[
	l_i^+ \cap l_i^- \cap(l_j^+ + l_j^-) \subset (l_i^+ \cap l_i^- \cap l_j^+) + (l_i^+ \cap l_i^- \cap l_j^-) = \{0\}.
\]
The remaining verifications are completely analogous to the three given above.
\end{proof}

In the following lemma, we examine the values of the volume cocycle $B_n$ over the image of the generic set $\clF^{\{4\}}$ via the inclusion $\rho^{\,\times 4}:\clF^4 \to \bbF_4^4$. For simplicity, in its proof we will write 4-tuples of indices $(j_0,\ldots,j_3)$ as $j_0\cdots j_3$, omitting parentheses and commas. 

\begin{lem} \label{non_zero}
For any $(a,b) \in \hat\bbC^2$, let $F_{(a,b)} \coloneqq \rho\big(\partial\theta(a,b)\big)$ and $F_a \coloneqq F_{(a,a)}$. Then for all $(a,b) \in (\bbC \smallsetminus \{0,1\})^2$, the equality 
\[
	B_4(F_\infty,F_0,F_1,F_{(a,b)}) = 2 \left(\clD(a) + \clD(b)\right)
\]
holds. In particular, the map 
$
	B_{4} \, \circ \, \rho^{\,\times 4}\vert_{\clF^{\{4\}}}: \clF^{\{4\}} \to \bbR
$
is non-zero. 
\end{lem}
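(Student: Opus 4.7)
The plan is to evaluate $B_4(F_\infty, F_0, F_1, F_{(a,b)})$ explicitly by unwinding the decomposition $B_4 = \sum_{J:\|J\|_1 = 2} B_4^J$ provided by \autoref{gen_continuity}, which is valid on the general-position locus $\bbF_4^{\{4\}}$. By \autoref{SG transitive}(iii), our 4-tuple lies in that locus whenever $(a,b)\in(\bbC\setminus\{0,1\})^2$. Each summand $B_4^J$, restricted to $\bbF_4^{\{4\}}$, takes the form $\clD\circ z_J$ for a rational function $z_J$ obtained as a classical $\bbP^1$-cross-ratio in the two-dimensional quotient $\bbC^4/\langle F_\infty^{j_0}, F_0^{j_1}, F_1^{j_2}, F_{(a,b)}^{j_3}\rangle$; note that the numerator in \eqref{T_map} always fills $\bbC^4$ because $\sum_i(j_i+1) = 6 > 4$.

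To make this concrete, I would first fix explicit affine lifts of the four flags. In the adapted basis we have $l_a^+ = \langle(1,0,a,0)^\top, (0,1,0,-a)^\top\rangle$, $l_b^- = \langle(1,b,0,0)^\top, (0,0,-1,b)^\top\rangle$, and $l_a^+\cap l_b^- = \langle(1,b,a,-ab)^\top\rangle$; with a suitable rescaling of coordinates to handle the boundary cases $a,b \in \{\infty, 0, 1\}$, this pins down each of $F_\infty, F_0, F_1, F_{(a,b)}$ as an affine flag. Next I would enumerate the 10 relevant multi-indices: the 4 permutations of $(2,0,0,0)$ and the 6 permutations of $(1,1,0,0)$. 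For each, compute the two-dimensional quotient, reduce the four numerator vectors modulo the denominator subspace, and read off the resulting cross-ratio.

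The key organizing principle is the $\bbZ/2$-symmetry swapping the two $\SL_2$-factors of $\theta$, which interchanges $l^+ \leftrightarrow l^-$ and therefore $a \leftrightarrow b$ in the final formula; it pairs the ten indices into five $a$--$b$-conjugate pairs, halving the computational effort. Each $z_J(a,b)$ is expected to lie in the Bloch--Wigner orbit of $a$ or of $b$ under \eqref{symmetries_bw}, so that $\clD(z_J(a,b))$ equals $\pm\clD(a)$ or $\pm\clD(b)$; the signed sum then collapses to $2\clD(a)+2\clD(b)$. For the final claim, specializing to $a = b$ with $a$ in the upper half-plane and $a \neq 0, 1$ gives value $4\clD(a) \neq 0$, since the Bloch--Wigner dilogarithm is strictly positive on the upper half-plane off the real axis.

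The principal obstacle is bookkeeping across the 10 computations: tracking the lifts of spanning vectors through successive quotients, the signs inherited from the face operators, and the precise dilogarithm identities in \eqref{symmetries_bw} used to rewrite $\clD(z_J)$ as $\pm\clD(a)$ or $\pm\clD(b)$. The $\bbZ/2$-swap symmetry not only cuts the work in half, but also furnishes an essential structural consistency check on the final coefficient $2$.
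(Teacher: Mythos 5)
Your overall route is the paper's: restrict to the general-position locus, expand $B_4$ into the ten summands $B_4^J$ with $\Vert J\Vert_1=2$ (the four permutations of $(2,0,0,0)$ and six of $(1,1,0,0)$), identify each restricted $B_4^J$ with $\clD$ of a cross-ratio in a two-dimensional quotient, and sum. But the mechanism you propose for the collapse of the sum is wrong, and the essential ingredient is missing. It is \emph{not} the case that each $z_J(a,b)$ lies in the six-element orbit of $a$ or of $b$ under the symmetries \eqref{symmetries_bw}: carrying out the computation one finds that the four indices of type $(2,0,0,0)$ each contribute $\clD(b)$, while the six of type $(1,1,0,0)$ contribute, in pairs, $-\clD(b/a)$, $\clD\bigl(\tfrac{1-b}{1-a}\bigr)$, and $-\clD\bigl(\tfrac{a(1-b)}{b(1-a)}\bigr)$ --- cross-ratios that genuinely mix $a$ and $b$ and do not individually reduce to $\pm\clD(a)$ or $\pm\clD(b)$. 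The sum
\[
4\,\clD(b)+2\left(-\clD\!\left(\tfrac{a(1-b)}{b(1-a)}\right)+\clD\!\left(\tfrac{1-b}{1-a}\right)-\clD\!\left(\tfrac{b}{a}\right)\right)
\]
collapses to $2(\clD(a)+\clD(b))$ only by invoking the Spence--Abel five-term relation \eqref{spence_abel} satisfied by $\clD$, which your plan never uses. Without it the ``signed sum then collapses'' step fails, so this is a genuine gap rather than bookkeeping.

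Relatedly, the $\bbZ/2$-symmetry you invoke to pair the ten indices into $a$--$b$-conjugate pairs is not available in the form you describe: the embedding $\rho$ of \autoref{rho_F} breaks the symmetry between $l^+$ and $l^-$ (it singles out $l^+$ as the two-dimensional member of the flag), so swapping the two $\SL_2$-factors carries $\rho$ to the \emph{other} equivariant embedding, not back to itself. Consistently with this, the actual values are not $a\leftrightarrow b$-conjugate in pairs --- all four $(2,0,0,0)$-type terms equal $\clD(b)$, none equals $\clD(a)$ --- and the symmetry of the final answer in $a$ and $b$ emerges only after the five-term relation is applied. Your deduction of non-vanishing from the closed formula (specialize to $a=b$ in the upper half-plane, where $\clD>0$) is fine, as is your observation that general position of the image tuple is guaranteed by \autoref{SG transitive}(iii).
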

\begin{proof}
We claim the following values for the non-vanishing summands $B_4^J$ of $B_4$
(see \autoref{gen_continuity}): 
\begin{equation} \label{terms_J}B_4^J(F_\infty,F_0,F_1,F_{(a,b)}) = \left\{\begin{array}{ll}
	\phantom{-}\clD(b) & \mbox{ if } J \in \{2000,0200,0020,0002\},\\
	-\clD(b/a) & \mbox{ if } J \in \{1100,0011\}, \\
	\phantom{-}\clD\left(\frac{1-b}{1-a}\right) & \mbox{ if } J \in \{1010,0101\}, \\[4pt]
	-\clD\left(\frac{a(1-b)}{b(1-a)}\right) & \mbox{ if } J \in \{1001,0110\}. \\
\end{array}\right.
\end{equation}
First, if we let $\phi_{ab} \coloneqq e_1+b\,e_2-a\,f_2+ab\,f_1$ for any $a,b \in \hat\bbC$, then
\begin{equation*} 
	l_a^+ = \langle e_1-a\,f_2, \ e_2+a\,f_1 \rangle, \quad l_b^- = \langle e_1+b\,e_2, \ b\,f_1-f_2 \rangle, \qand  l_a^+ \cap l_b^- = \langle \phi_{ab} \rangle, %\\[2pt]
	%l_\infty^+ &= \langle f_2,f_1\rangle, & l_\infty^- & = \langle e_2,f_1\rangle, & l_\infty^+ \cap l_\infty^- &= \langle f_1 \rangle, \\[2pt]
\end{equation*}
and therefore, for $a,b \in \bbC\smallsetminus \{0,1\}$,
\begin{align*}
	F_\infty &=\big(\{0\} \subset \langle f_1\rangle \subset \langle f_1,f_2 \rangle \subset \langle f_1,f_2,e_2 \rangle \subset \bbC^4\big), \\[-2pt]
	F_0 &=\big(\{0\} \subset \langle e_1\rangle \subset \langle e_1,e_2 \rangle \subset \langle e_1,e_2,f_2 \rangle \subset \bbC^4\big), \\[-2pt]
	F_{(a,b)} &=\big(\{0\} \subset \langle \phi_{ab} \rangle \subset \langle \phi_{ab}, e_1-a\,f_2 \rangle \subset \langle \phi_{ab},e_1-a\,f_2, e_1+b\,e_2 \rangle \subset \bbC^4\big).
\end{align*}
Now, we have:\vspace{-4pt}
\begin{align*}
B_4^{2000}(F_\infty,F_0,F_1,F_{(a,b)}) &= \mathrm{Vol} \ \bigg[\frac{\langle f_1,f_2,e_2 \rangle + \langle e_1 \rangle + \langle \phi_{11}\rangle + \langle \phi_{ab} \rangle}{\langle f_1,f_2\rangle + \{0\} + \{0\} + \{0\}};\big(e_2,e_1,\phi_{11},\phi_{ab}\big)\bigg] \\
	&=\mathrm{Vol} \ \big[V^{(+1,0)}_2/\langle f_1,f_2\rangle;\big(e_2,e_1,\phi_{11},\phi_{ab}\big)\big]\\
	&\overset{(\ast)}{=}\mathrm{Vol}\left[\bbC^2;\begin{pmatrix}
	0 & 1 & 1 & 1 \\[-3pt]
	1 & 0 & 1 & b
\end{pmatrix}\right] \\
	&= \mathrm{Vol}_{\bbP}(\infty,0,1,b)=\clD(b), \qand \\[-30pt]
\end{align*}
\begin{align*}
B_4^{0011}&(F_\infty,F_0,F_1,F_{(a,b)}) = \\
	&= \mathrm{Vol} \ \bigg[\frac{\langle f_1 \rangle + \langle e_1 \rangle + \langle \phi_{11},e_1\!-\!f_2 \rangle + \langle \phi_{ab},e_1\!-\!af_2 \rangle}{\{0\} + \{0\} + \langle \phi_{11}\rangle+ \langle \phi_{ab}\rangle};\big(f_1,e_1,e_1-f_2,e_1-af_2\big)\bigg] \\
	&=\mathrm{Vol} \ \big[V_2/ \langle \phi_{11}, \phi_{ab}\rangle;\big(f_1,e_1,e_1-f_2,e_1-af_2\big)\big] \\
	&\!\overset{(\ast\ast)}{=}\!\mathrm{Vol}\left[\bbC^2;\begin{pmatrix}
	0 & \frac{a-b}{a-1} & 1 & b \\[-2pt]
	-b^{-1} & 0 & 1 & a
\end{pmatrix}\right] \\[-2pt]
	&=\mathrm{Vol}_{\bbP}(\infty,0,1,a/b) = \clD(a/b) = -\clD(b/a).
\end{align*}
The last equality in the case $J=0011$ follows from one of the symmetries in \eqref{symmetries_bw}. The equalities $(\ast)$ and $(\ast\ast)$ follow from the notion of equivalence in $\sigma_3$, implemented respectively by the epimorphisms $T_1,T_2: V_2 \twoheadrightarrow \bbC^2$ defined as:
\begin{align*}
	T_1&: e_1 \mapsto \big(1, 0\big)^\top, \qquad\! e_2 \mapsto  \big(0, 1\big)^\top, \qquad f_2 \mapsto \big(0,0\big)^\top, \qquad\ f_1 \mapsto \big(0,0\big)^\top; \\[-4pt]
	T_2&: e_1 \mapsto \left({\textstyle \frac{\scriptstyle a-b}{\scriptstyle a-1}},0\right)^\top, \quad\!\!\! e_2 \mapsto \left(-1, {\textstyle\frac{\scriptstyle 1-b}{\scriptstyle b}} \right)^\top, \, f_2 \mapsto \left({\textstyle\frac{\scriptstyle 1-b}{\scriptstyle a-1}}, -1\right)^\top, \ f_1 \mapsto \left(0, -b^{-1}\right)^\top.
\end{align*}
Note that $\ker T_1 = \langle f_1,f_2 \rangle$ and $\ker T_2 = \langle \phi_{11}, \phi_{ab} \rangle$. To conclude, we add the terms in \eqref{terms_J} and use the fact that $\clD$ satisfies the Spence--Abel equation \eqref{spence_abel}: 
\begin{align*}
	B_4(F_\infty,F_0,F_1,F_{(a,b)}) &= 4 \, \clD(b) + 2 \left(-\clD\!\left({\textstyle \frac{a(1-b)}{b(1-a)}}\right)+\clD\!\left({\textstyle \frac{1-b}{1-a}}\right)-\clD\!\left({\textstyle \frac{b}a}\right)\right) \\
	&= 4 \, \clD(b)+2\big(\clD(a) - \clD(b)\big) = 2\big(\clD(a) + \clD(b)\big).\qedhere
\end{align*}
\end{proof}

\subsection{Proof of \autoref{lem_ind_base}} \label{subsec:ind_base}
%Let $(d,n) \in \{(1,3),(0,4)\}$. 
If $\rho: \clF \to \bbF_4$ induced a morphism of $\Linfty$-complexes, then it would, by virtue of functoriality \cite[Prop. 8.4.2]{Monod-Book}, implement the map $\res_2$ from \autoref{lem_ind_base} at the cocycle level, and the injectivity of $\res_2$ would be a consequence of \autoref{non_zero}. However, since $\rho$ does not preserve measure classes, it cannot possibly induce a map of $\Linfty$-spaces.

To circumvent this measure-theoretical difficulty, we will consider the commutative diagram
\begin{equation} \label{eq:discretebc}
\begin{gathered}
\xymatrixcolsep{4pc}
\xymatrixrowsep{1.2pc}
\xymatrix{
\Hb^3(\SL_{4}(\bbC)) \ar[r]^-{\res_2} \ar[d]_{\id^\ast} & \Hb^3(\SO_{4}(\bbC)) \ar[d]^{\id^\ast}\\
\Hb^3(\SL_{4}(\bbC)^\delta) \ar[r]^-{\res_2^\delta} & \Hb^3(\SO_4(\bbC)^\delta)
}
\end{gathered} \vspace{-2pt}
\end{equation}
where the upper indices $(-)^\delta$ on groups indicate that they are being regarded as discrete. In words, the map $\res_2$ ``factors'' over its discrete analogue $\res_2^\delta$. This observation proves advantageous for us, as the latter map is more tractable than the former at the level of cocycles. 

As actions of discrete groups, $\SL_4(\bbC)^\delta \curvearrowright \bbF_4$ and $\SO_4(\bbC)^\delta \curvearrowright \clF$ are amenable. In fact, their point-stabilizers are isomorphic to upper-triangular subgroups, and hence abstractly solvable. Thus, we have canonical isomorphisms
\begin{equation*}
	\Hb^\ast(\SL_{4}(\bbC)^\delta) \cong \HH^\ast(\ell^\infty(\bbF_n^{\bullet+1})^{\SL_{4}(\bbC)}) \qand
	\Hb^\ast(\SO_{4}(\bbC)^\delta) \cong \HH^\ast(\ell^\infty(\clF^{\bullet+1})^{\SO_{4}(\bbC)}). 
\end{equation*}
Moreover, the now well-defined morphism of complexes\vspace{-3pt}
\begin{equation} \label{eq:rho_d}
	\rho^\ast: \ell^\infty(\bbF_4^{\bullet+1}) \to \ell^\infty(\clF^{\bullet+1}) \vspace{-3pt}
\end{equation}
implements the restriction $\res_2^\delta: \Hb^\ast(\SL_4(\bbC)^\delta) \to \Hb^\ast(\SO_4(\bbC)^\delta)$ (see \cite[Thms. 4.23, 4.15]{Frigerio}). 

Our last ingredient in the proof of \autoref{lem_ind_base} is an ``equivariant lifting'' theorem, due to Monod \cite{Monod-Lifting}, that will enable the realization at the level of cocycles of the maps $\id^\ast$ in the diagram \eqref{eq:discretebc}. For any measure space $X$, we denote the Banach space of bounded measurable functions $X \to \bbR$ (without identification modulo null sets) by $\Binfty(X)$. If $\varphi \in \Binfty(X)$, we write $[\varphi]$ for its equivalence class in $\Linfty(X)$. 

\begin{thm}[{see \cite{Monod-Lifting}}] \label{lifting}
Let $G$ be a locally compact group with an amenable $C^1$-action on a differentiable manifold $X$. Then for every $k \in \bbN$, there exists a $G$-equivariant lifting \vspace{-3pt}
\[
	s^k: \Linfty(X^{k+1}) \to \Binfty(X^{k+1})\vspace{-3pt}
\] 
of the canonical projection $\Binfty(X^{k+1}) \twoheadrightarrow \Linfty(X^{k+1})$ such that:
\begin{enumerate}[label=\emph{(\roman*)}]
 	\item $s^\bullet$ is a morphism of complexes, and 
	\item if $x \in X^{k+1}$ is a continuity point of a function $\varphi \in \Binfty(X^{k+1})$, then $(s[\varphi])(x) = \varphi(x)$. 
\end{enumerate}
\end{thm}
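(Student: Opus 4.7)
The plan is to build $s^k$ in two stages: first produce a non-equivariant \emph{strong} lifting that already respects the simplicial structure, then average it over $G$ using amenability to promote it to a $G$-equivariant one.

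\textbf{Stage 1 (strong lifting compatible with face maps).} Apply the classical Ionescu-Tulcea / von Neumann-Maharam lifting theorem to $(X,\mu)$, with $\mu$ a Radon measure in the Lebesgue class, to obtain a Banach-algebra section $\sigma^0:\Linfty(X)\to\Binfty(X)$ of the quotient map. Because $X$ is a differentiable manifold, the Lebesgue differentiation theorem on $X$ lets us upgrade $\sigma^0$ to a \emph{strong} lifting, that is, $\sigma^0[\varphi](x)=\varphi(x)$ at every continuity point $x$ of $\varphi$. Extend to $\sigma^k:\Linfty(X^{k+1})\to\Binfty(X^{k+1})$ by the product construction $\sigma^k=(\sigma^0)^{\otimes(k+1)}$ on elementary tensors (then by weak density) so that pullbacks along the face maps $\partial_i:X^{k+1}\to X^k$ are intertwined. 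This automatically makes $\sigma^\bullet$ a morphism of complexes and preserves property (ii) in each degree.

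\textbf{Stage 2 (equivariance via amenability).} Amenability of the action supplies a sequence of Borel maps $\beta_n:X\to\Prob(G)$ that are asymptotically equivariant, i.e.\ $\|h_\ast\beta_n(x)-\beta_n(hx)\|_1\to 0$ locally uniformly in $h\in G$. Fix a non-principal ultrafilter $\mathcal{U}$ on $\bbN$ and define
\[
s^k[\varphi](x)\;\coloneqq\;\lim_{n\to\mathcal{U}}\int_G \sigma^k\!\bigl[g^{-1}\!\cdot\!\varphi\bigr]\!\bigl(g^{-1}x\bigr)\,d\beta_n(x)(g),
\]
with the convention $(g\cdot\varphi)(y)=\varphi(g^{-1}y)$. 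The integrand is pointwise defined (since $\sigma^k$ lands in $\Binfty$) and uniformly bounded by $\|\varphi\|_\infty$, so the ultralimit exists everywhere. Equivariance $s^k[h\cdot\varphi]=h\cdot s^k[\varphi]$ follows from the substitution $g\mapsto hg$ inside the integral combined with the asymptotic equivariance of $\beta_n$ along $\mathcal{U}$. Property (i) is preserved because the face pullbacks commute with the $G$-action and the averaging is applied in each coordinate; property (ii) is preserved because at a continuity point $x$ of $\varphi$ the integrand collapses, by strongness of $\sigma^k$, to $(g^{-1}\!\cdot\!\varphi)(g^{-1}x)=\varphi(x)$ for every $g$.

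\textbf{Main obstacle.} The genuinely delicate step is the joint Borel measurability and $\beta_n(x)$-integrability of the map $g\mapsto \sigma^k[g^{-1}\!\cdot\!\varphi](g^{-1}x)$. A priori $\sigma^k$ is only a section of the $L^\infty$-quotient and carries no measurable dependence on the translation parameter $g$. Handling this rigorously forces one either to build $\sigma^\bullet$ in the Borel category from the outset, or to invoke a parameter-dependent version of the Ionescu-Tulcea theorem. This is exactly the point at which the $C^1$ hypothesis on the action is used: it guarantees that the $G$-action preserves the Lebesgue class smoothly on compacta, so that continuity points of $\varphi$ are carried to continuity points of $g\cdot\varphi$ in a locally uniform way and the strong-lifting property transfers measurably to all translates. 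I expect the bulk of the technical work of the theorem to live in this measurability verification, with the algebraic/simplicial bookkeeping in Stage~1 and the averaging in Stage~2 being essentially formal once it is in place.
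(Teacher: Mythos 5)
First, note that the paper does not actually prove this statement: it is quoted from Monod's \emph{Equivariant measurable liftings}, and the ``About the proof'' environment consists solely of pointers into that reference. So your proposal has to be measured against Monod's argument. Against that benchmark it has a genuine gap, and it is precisely the one you flag at the end but do not close: the measurability (hence $\beta_n(x)$-integrability) of $g \mapsto \sigma^k[g^{-1}\!\cdot\!\varphi](g^{-1}x)$. A lifting produced by the Ionescu-Tulcea/Maharam theorem comes from transfinite recursion and Zorn's lemma; it is a highly non-canonical object, and there is no reason for $g\mapsto\sigma^k[g^{-1}\!\cdot\!\varphi]$ to depend measurably on $g$. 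Strongness only pins down the values of $\sigma^k[g^{-1}\!\cdot\!\varphi]$ on the set of continuity points of the translate, which for a general $\varphi\in\Binfty(X^{k+1})$ is negligible, so it gives no control where the measurability problem lives. The $C^1$ hypothesis cannot repair this for a lifting chosen in advance of the group action; it constrains the action, not the lifting. Monod's proof reverses your logic: rather than averaging an abstract lifting, he \emph{defines} the lifting by a canonical formula --- a Lebesgue-differentiation limit over shrinking balls in charts, taken along an ultrafilter, composed with a $G$-equivariant conditional expectation $\Linfty(G\times X^{k+1})\to\Linfty(X^{k+1})$ supplied by amenability of the action. Because the recipe is explicit, measurability in the group variable is automatic; the $C^1$ regularity is used to show that the action distorts the differentiation basis in a controlled way, so that density and continuity points are respected by translation, which is what yields both equivariance and property (ii).

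A secondary weakness sits in your Stage 1. Liftings are not continuous for any topology in which elementary tensors are dense in $\Linfty(X^{k+1})$, so ``$(\sigma^0)^{\otimes(k+1)}$ extended by weak density'' is not a construction; the existence of product liftings compatible with the coordinate projections is itself a delicate question. Moreover property (i) is the identity $s^{k+1}[\varphi\circ\partial_i]=s^{k}[\varphi]\circ\partial_i$, a consistency condition between liftings on spaces of \emph{different} dimension, which is not automatic for abstractly chosen liftings --- this is why the paper cites a separate remark of Monod for it. In the canonical-formula approach it again comes for free from naturality in $X$. In short: your two-stage plan correctly names the ingredients (strong liftings, amenable averaging) and honestly identifies the obstruction, but the obstruction is the theorem, and the route you propose for it does not go through.
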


\begin{proof}[About the proof]
The existence of the $s^k$ is \cite[Thm. A]{Monod-Lifting}. For (i), see \cite[Rem. 1]{Monod-Lifting}. For (ii), see the comment at the end of the proof of \cite[Thm. B]{Monod-Lifting}. 
\end{proof}

Let us fix a morphism of complexes $s$ that results from the composition of an equivariant lifting as in \autoref{lifting} and the surjection $\Binfty \twoheadrightarrow \ell^\infty$: 
\begin{equation} \label{eq:s}
\begin{gathered}
\xymatrixcolsep{1pc}
\xymatrix{
	\Linfty(\bbF_4^{\bullet+1}) \ar[r] \ar@/_1.3pc/@{-->}[rr]_s& \Binfty(\bbF_4^{\bullet+1}) \ar@{->>}[r] & \ell^\infty(\bbF_4^{\bullet+1})
}
\end{gathered} \vspace{-6pt}
\end{equation}
Again by functoriality, the morphism $s$ implements $\id^\ast:\Hb^\ast(\SL_4(\bbC)) \to \Hb^\ast(\SL_4(\bbC)^\delta)$ at the level of cocycles. Observe that $s[B_{4}] = B_{4}$ on $\bbF_4^{\{4\}}$ by \autoref{gen_continuity}.

\begin{proof}[Proof of \autoref{lem_ind_base}]
Since $\Hb^3(\SL_4(\bbC)) \cong \bbR$, it suffices to prove that the composition $\res^\delta \circ \id^\ast$ in the diagram \eqref{eq:discretebc} is not the zero map. This will follow from exhibiting a non-trivial 3-cocycle in the image of $\rho^\ast \circ s$, for $\rho^\ast$ and $s$ as introduced in \eqref{eq:rho_d} and \eqref{eq:s}, respectively.

Let $[B_4]$ be the class up to null sets of the volume cocycle $B_4 \in \Binfty(\bbF_4^4)$, and set $\clB_2 \coloneqq (\rho^\ast \circ s)[B_4] \in \ell^\infty(\clF^{4})$. Observe that $\clB_2$ is an $\SO_4(\bbC)$-invariant bounded 3-cocycle. Moreover, 
\[
	\clB_2(x) = s[B_4]\big(\rho^{\,\times 4}(x)\big) = B_4\big(\rho^{\,\times 4}(x)\big) \quad \mbox{for all } x \in \clF^{\{4\}},
\]
in virtue of \autoref{gen_continuity} and \autoref{SG transitive} (iii). Thus, we deduce from \autoref{non_zero} that $\clB_2$ does not vanish identically on $\clF^{\{4\}}$, and this implies that $\clB_2$ is not a coboundary. Indeed, by \autoref{SG transitive} (ii), any $\SO_4(\bbC)$-invariant function $\varphi \in \ell^\infty(\clF^3)$ must take a constant value $c$ on $\clF^{\{3\}}$, and therefore, for every $x \in \clF^{\{4\}}$, any coboundary $\dd\!\varphi$ must satisfy the equality
\[
	\dd\!\varphi(x) = {\textstyle \sum_{i=0}^3} (-1)^i \, \varphi(\partial_i(x)) = c - c + c - c = 0. \qedhere
\]
\end{proof}

We finish this subsection by computing the norm of the restricted class $\res_2(b_4) \in \Hb^3(\SO_4(\bbC))$. In particular, this proves that the restrictions $\res_r$ need not be isometric in the $\mathbf{D}_r$ case. 

\begin{prop} \label{norm_low}
The restricted class $\res_2(b_4) \in \Hb^3(\SO_4(\bbC))$ has Gromov norm equal to $4\,v_{\bbH^3}$, and the map
$
	\res_2: \Hb^3(\SL_4(\bbC)) \hookrightarrow \Hb^3(\SO_4(\bbC))
$
has operator norm equal to $2/5$. 
\end{prop}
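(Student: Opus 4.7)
The plan is to pull back via the exceptional isogeny $\theta:\SL_2(\bbC)\times\SL_2(\bbC)\to\SO_4(\bbC)$ from \eqref{eq:partial_kappa_D}. Since $\ker\theta$ is central and finite, $\theta^\ast:\Hb^3(\SO_4(\bbC))\to\Hb^3(\SL_2(\bbC)\times\SL_2(\bbC))$ is an isometric isomorphism. Writing $p_i:\SL_2(\bbC)\times\SL_2(\bbC)\to\SL_2(\bbC)$ for the $i$-th projection and setting $b_2^{(i)}:=p_i^\ast b_2$, a Künneth-type decomposition for bounded cohomology (valid since $\Hb^1(\SL_2(\bbC))=\Hb^2(\SL_2(\bbC))=0$) identifies the target with $\bbR\, b_2^{(1)}\oplus\bbR\, b_2^{(2)}$.

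The first step is to write $\theta^\ast\res_2(b_4)=c_1\, b_2^{(1)}+c_2\, b_2^{(2)}$ and determine the coefficients. The composition $\varphi:=\mathrm{incl}\circ\theta:\SL_2(\bbC)\times\SL_2(\bbC)\to\SL_4(\bbC)$ is the tensor product representation $V\boxtimes W$, so that $\theta^\ast\res_2(b_4)=\varphi^\ast b_4$. The coefficient $c_i$ is computed by restricting $\varphi$ to the $i$-th factor (via $g\mapsto(g,e)$, resp.\ $g\mapsto(e,g)$): in both cases the restricted representation of $\SL_2(\bbC)$ is the reducible $V\oplus V:\SL_2(\bbC)\to\SL_4(\bbC)$, which factors as $\SL_2(\bbC)\xrightarrow{\Delta}\SL_2(\bbC)\times\SL_2(\bbC)\xrightarrow{\mathrm{diag}}\SL_4(\bbC)$. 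The additivity of the bounded Borel class under block-diagonal embeddings gives $\mathrm{diag}^\ast b_4=b_2^{(1)}+b_2^{(2)}$, hence $(V\oplus V)^\ast b_4=2\,b_2$, yielding $c_1=c_2=2$. Alternatively, the cocycle value $B_4(F_\infty,F_0,F_1,F_{(a,b)})=2(\clD(a)+\clD(b))$ obtained in \autoref{non_zero}, combined with $\mathrm{Vol}_\bbP=\clD\circ\CR_0$ and $\SO_4(\bbC)$-transitivity on $\clF^{\{3\}}$, writes $\varphi^\ast b_4$ directly as $2\,\mathrm{Vol}_\bbP^{(1)}+2\,\mathrm{Vol}_\bbP^{(2)}$ on generic tuples in $(\bbP\times\bbP)^4$.

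Next, I compute $\|2\,b_2^{(1)}+2\,b_2^{(2)}\|=4\,v_{\bbH^3}$. The upper bound follows from the cocycle representative $2\,\mathrm{Vol}_\bbP^{(1)}+2\,\mathrm{Vol}_\bbP^{(2)}\in\Linfty((\bbP\times\bbP)^4)^{\SL_2(\bbC)\times\SL_2(\bbC)}$, whose essential supremum is at most $4\,v_{\bbH^3}$ by the triangle inequality and $\|\mathrm{Vol}_\bbP\|_\infty=v_{\bbH^3}$. For the matching lower bound, pull back along the diagonal $\Delta:\SL_2(\bbC)\hookrightarrow\SL_2(\bbC)\times\SL_2(\bbC)$: since $\Delta^\ast(2\,b_2^{(1)}+2\,b_2^{(2)})=4\,b_2$ with norm $4\,v_{\bbH^3}$ and pullbacks are norm non-increasing, the claim follows.

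Combining the isometry of $\theta^\ast$ with this computation gives $\|\res_2(b_4)\|=4\,v_{\bbH^3}$. Since $\Hb^3(\SL_4(\bbC))=\bbR\,b_4$ with $\|b_4\|=\tfrac{4\cdot 15}{6}\,v_{\bbH^3}=10\,v_{\bbH^3}$ by \autoref{thm:irrep}, the operator norm is $\|\res_2\|=\|\res_2(b_4)\|/\|b_4\|=2/5$. The main technical point is identifying the coefficients $c_1=c_2=2$; this can be handled either by the additivity of the Borel class under block-diagonal embeddings, or, avoiding that input, by the explicit cocycle formula of \autoref{non_zero} which we have already established.
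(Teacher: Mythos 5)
Your strategy reaches the correct values, and the norm computation itself takes a genuinely different route from the paper's. The paper works entirely on the Furstenberg boundary $\clF$ of $\SO_4(\bbC)$: it shows $\res_2(b_4)$ is represented by the explicit cocycle $[\clB_2]\in\Linfty(\clF^4)$, and gets the lower bound from essential $3$-transitivity (no nonzero invariant coboundaries), so the class norm is literally the sup norm $4\,v_{\bbH^3}$ of the cocycle. You instead transfer to $\SL_2(\bbC)\times\SL_2(\bbC)$ via the isometric isomorphism $\theta^\ast$ and obtain the lower bound by restricting $2\,b_2^{(1)}+2\,b_2^{(2)}$ along the diagonal to $4\,b_2$; that diagonal trick is correct, self-contained, and a nice alternative to the coboundary argument. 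The final step $\Vert\res_2\Vert=4v_{\bbH^3}/\Vert b_4\Vert=2/5$ is the same in both proofs and is valid because the domain is one-dimensional.

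The gap sits exactly where you flag "the main technical point": identifying $\theta^\ast\res_2(b_4)$ with $2\,b_2^{(1)}+2\,b_2^{(2)}$. Route (a) invokes a Whitney-sum formula $\mathrm{diag}^\ast b_4=b_2^{(1)}+b_2^{(2)}$ for the \emph{bounded} Borel class under block-diagonal embeddings; this is plausible (the unbounded Borel class is primitive, hence additive) but is established neither in this paper nor in \cite{BBI}, and this route additionally requires injectivity of $\Hb^3(\SL_2(\bbC)\times\SL_2(\bbC))\to\Hb^3(\SL_2(\bbC))^{\oplus 2}$, i.e. the K\"unneth decomposition you assert without reference. Route (b) claims the identity of \autoref{non_zero} "writes $\varphi^\ast b_4$ directly as $2\,\mathrm{Vol}_\bbP^{(1)}+2\,\mathrm{Vol}_\bbP^{(2)}$ on generic tuples" --- but this is precisely where \autoref{rem:not_mcp} bites: the boundary map $\rho\circ\partial\theta\colon\bbP\times\bbP\to\bbF_4$ has null image, so it induces no map of $\Linfty$-resolutions, and the pointwise pullback of the cocycle $B_4$ does not a priori represent the pulled-back \emph{class}. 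Bridging this requires the machinery of Subsection 5.3 (the discrete-group detour, Monod's equivariant lifting $s$, continuity of $B_4$ on $\bbF_4^{\{4\}}$, and the fact that $\rho^{\times 4}$ sends $\clF^{\{4\}}$ into $\bbF_4^{\{4\}}$), which is what the first half of the paper's own proof of this proposition assembles in order to conclude that $\res_2(b_4)$ is represented by $[\clB_2]$. If you import that conclusion explicitly, your argument closes; as written, the class-level identification is asserted rather than proved.
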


\begin{proof}
%After choosing 
%The morphism $s$ induces the map $\id^\ast: \Hb^\ast(\SO_4(\bbC)) \to \Hb^\ast(\SO_4(\bbC)^\delta)$, which is the arrow at the right of the diagram \eqref{eq:discretebc}. 
Let $\clB_2 \in \ell^\infty(\clF^4)$ be the cocycle defined in the proof of \autoref{lem_ind_base}. Since $\clB_2$ is the restriction of a Borel cocycle, its class $[\clB_2] \in \Linfty(\clF^4)$ is well-defined. Now, as in \eqref{eq:s}, let \vspace{-3pt}
\[
	s': \Linfty(\clF^{\bullet+1}) \to \ell^\infty(\clF^{\bullet+1}) \vspace{-3pt}
\]
be the $\SO_4(\bbC)$-equivariant morphism of complexes defined by the composition of a lifting as in \autoref{lifting} and the surjection $\Binfty \twoheadrightarrow \ell^\infty$. %By functoriality, the morphism $s'$ realizes at the level of cocycles the map $\id^\ast: \Hb^3(\SO_4(\bbC)) \to \Hb^3(\SO_4(\bbC)^\delta)$ that appears in the diagram \eqref{eq:discretebc}. 
Moreover, the equality $s'[\clB_2] = \clB_2$ holds since $\clB_2$ is continuous on the co-null set $\clF^{\{4\}} \subset \clF^4$. By \cite[Thm. 7.5.3]{Monod-Book} and the commutativity of the diagram \eqref{eq:discretebc}, we deduce that the norm of $\res_2(b_4)$ must equal the norm of the cohomology class defined by $[\clB_2]$. 

Now, by the essential transitivity of the action $\SO_4(\bbC) \curvearrowright \clF^3$ (see \autoref{SG transitive} (ii)), there are no non-trivial invariant 3-coboundaries in $\Linfty(\clF^{\bullet+1})$. Hence,
$
	\Vert \res_2(b_4) \Vert = \Vert [\clB_2] \Vert_\infty = 4 \cdot v_3
$
by \autoref{non_zero}. For the operator norm of $\res_2$, note simply that $\Vert b_4 \Vert = 10 \cdot v_3$. 
\end{proof}

\subsection{End of proof of Theorems \ref{new_stability} and \ref{restriction_inj}}
For any $r \geq r_1 = 2$, consider the commutative cube
\begin{equation} \label{cube}
\hspace{-50pt}\begin{gathered}
	\xymatrixcolsep{0.3pc}
	\xymatrixrowsep{0.3pc}
  	\xymatrix{
& \Hb^3(G_{r+1}) \ar@{^{(}->}[ld] \ar@{^{(}->}'[d][dd] && \makebox[\widthof{$B$}][l]{$\Hb^3(\GL_{n(r+1)}(\bbC))$} \ar[dd]_-\cong \ar[ld]_\cong \ar[ll] \\
\Hb^3(S_{r+1}) \ar[dd] && \Hb^3(\SL_{n(r+1)}(\bbC)) \ar[ll]_(.41){\res_{r+1}} \ar[dd]_(.30)\cong & \\
& \Hb^3(G_r) \ar@{^{(}->}[ld] && \makebox[\widthof{$B$}][l]{$\Hb^3(\GL_{n(r)}(\bbC))$} \ar'[l][ll] \ar[ld]_{\cong}\\ 
\Hb^3(S_r) && \Hb^3(\SL_{n(r)}(\bbC))\ar[ll]^-{\res_r}  &
	}
\end{gathered}
\end{equation}
where the horizontal arrows correspond to the restriction maps \vspace{-2pt}
\[
	\res_s: \Hb^3(\SL_{n(s)}(\bbC)) \to \Hb^3(S_s) \qand \Hb^3(\GL_{n(s)}(\bbC)) \to \Hb^3(G_s),\vspace{-2pt}
\]
the arrows pointing frontwards to the restriction maps\vspace{-2pt}
\[
	\sigma_{n(s)}: \Hb^3(\GL_{n(s)}(\bbC)) \to \Hb^3(\SL_{n(s)}(\bbC)) \qand \tau_s: \Hb^3(G_s) \to \Hb^3(S_s) \vspace{-2pt}
\]
for $s \in \{r,r+1\}$, and the vertical arrows to the maps induced by the block inclusions
\begin{align*}
	\iota_r: S_r \hookrightarrow S_{r+1}, \quad & \quad j_r: \SL_{n(r)}(\bbC) \hookrightarrow \SL_{n(r+1)}(\bbC), \\[-3pt]
	\iota_r: \, G_r \hookrightarrow G_{r+1},\quad &  \quad j_r: \GL_{n(r)}(\bbC) \hookrightarrow \GL_{n(r+1)}(\bbC),
\end{align*}
Observe that $\sigma_{n(s)}$ is an isomorphism by \cite[Cor. 8.5.5]{Monod-Book} and that $\tau_s$ is an injection by \cite[Prop. 8.6.2]{Monod-Book} for both $s \in \{r,r+1\}$, since $S_s < G_s$ is a finite-index normal subgroup. Moreover, the maps $j_r^\ast$ induced in degree three by $j_r$, both for $\GL$ and $\SL$, are isomorphisms due to \autoref{new_stability} for the $\mathbf{A}_r$ case. By \hyperlink{bootstrap_bis}{Key Lemma \emph{bis}}, the map $\iota_r^\ast:\Hb^3(G_{r+1}) \to \Hb^3(G_r)$ is an injection. 

The next lemma will serve as the induction step in the proofs of Theorems \hyperlink{new_stability_bis}{\ref{new_stability} \emph{bis}} and \hyperlink{restriction_inj_bis}{\ref{restriction_inj} \emph{bis}}. 

\begin{lem} \label{ind_step}
If, for any $r \geq 1$, the map $\res_r: \Hb^3(\SL_{n(r)}(\bbC)) \to \Hb^3(S_r)$
is an isomorphism, then all the arrows in the diagram \eqref{cube} are isomorphisms.
\end{lem}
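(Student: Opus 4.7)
The plan is a diagram chase in the commutative cube \eqref{cube}, supplemented by an analogue of \hyperlink{bootstrap_bis}{Key Lemma \emph{bis}} for the family $(S_r)$. I begin with the bottom face: commutativity gives $\tau_r\circ(\text{restriction})=\res_r\circ\sigma_{n(r)}$, whose right-hand side is an isomorphism by the hypothesis and the fact that $\sigma_{n(r)}$ is iso. Since $\tau_r$ is injective by \cite[Prop. 8.6.2]{Monod-Book}, the composition being iso forces $\tau_r$ to be iso and the restriction $\Hb^3(\GL_{n(r)}(\bbC))\to\Hb^3(G_r)$ to be iso as well; in particular $\Hb^3(G_r)\cong\bbR$. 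The back face then reads $\iota_r^\ast\circ(\text{restriction})=(\text{restriction})\circ j_r^\ast$ with right-hand side iso (using the $\mathbf A_r$ case of \autoref{new_stability}); combined with the injectivity of $\iota_r^\ast|_{\Hb^3(G_{r+1})}$ from \hyperlink{bootstrap_bis}{Key Lemma \emph{bis}}, both left-hand factors become isomorphisms and $\Hb^3(G_{r+1})\cong\bbR$. The left face now shows that $\iota_r^\ast\circ\tau_{r+1}=\tau_r\circ\iota_r^\ast$ is iso, so the arrow $\iota_r^\ast\colon\Hb^3(S_{r+1})\to\Hb^3(S_r)$ is surjective; analogously, the front and top faces give $\res_{r+1}$ injective with image equal to $\im\tau_{r+1}$, a one-dimensional subspace of $\Hb^3(S_{r+1})$.

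The remaining task is to prove that $\iota_r^\ast\colon\Hb^3(S_{r+1})\to\Hb^3(S_r)$ is injective; once this holds, $\Hb^3(S_{r+1})\cong\bbR$, and the already-known injections $\tau_{r+1}$ and $\res_{r+1}$ automatically become isomorphisms between one-dimensional spaces, settling the lemma. My strategy is to replay the proof of \hyperlink{bootstrap_bis}{Key Lemma \emph{bis}} with $S_r$ in place of $G_r$. The critical input is that the stabilizer $M_{r+1}$ of a generic triple described in \autoref{3transitive} projects in the $\mathbf D_r$ case onto $\OO_{2(r-1)+1}(\bbC)$, a group with elements of determinant $-1$; hence $M_{r+1}\not\subset S_{r+1}$, and the $S_{r+1}$-orbit on $\clP_{r+1}^{\{3\}}$ coincides with the $G_{r+1}$-orbit. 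The same line of reasoning applied to 4- and 5-tuples guarantees that the cross-ratio parametrizations of Section \ref{homog} descend to $S_{r+1}$-orbits generically. Consequently, the spectral sequence of \autoref{kernel_H3} and the secondary-stability argument of \autoref{0row} both transfer to $S$, yielding $\ker(\iota_r^\ast|_{\Hb^3(S_{r+1})})\cong\ker(\iota_{r_1}^\ast|_{\Hb^3(S_{r_1+1})})$ for every $r\geq r_1=2$.

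Combining this with weak bc-stability for the $\mathbf D_r$ family from \cite{DM+Hartnick,KasSr}, which provides $\iota_r^\ast$ injective on $\Hb^3(S_{r+1})$ for $r$ sufficiently large, the secondary-stability isomorphism forces $\ker(\iota_r^\ast|_{\Hb^3(S_{r+1})})=0$ for all $r\geq 2$, closing the chase. The main obstacle is the transfer itself: one must check that no new $S_{r+1}$-invariant functions appear in the relevant $\Linfty$-complex beyond those used in Section \ref{sec:bootstrap} when defining the operator $D_{r+1}^3$ of \eqref{higher_5term}. Given the orbit-coincidence just observed, this is essentially automatic, but carefully tracking the Lebesgue-space isomorphisms of Section \ref{homog} through the $S$-version is the principal technical burden of the argument.
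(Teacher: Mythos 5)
Your first paragraph is precisely the paper's own proof, which is nothing more than the four-face chase (bottom, back, left, front); up to that point the two arguments coincide. Where you depart from it is in observing that the chase by itself only delivers surjectivity of $\iota_r^\ast\colon\Hb^3(S_{r+1})\to\Hb^3(S_r)$ together with injectivity of $\tau_{r+1}$ and $\res_{r+1}$: the cube alone cannot exclude, say, $\dim\Hb^3(S_{r+1})=2$ with $\im\tau_{r+1}=\im\res_{r+1}$ a line and $\ker(\iota_r^\ast)$ a complementary line. The pattern that works on the bottom and back faces (composite iso plus \emph{outer} factor injective forces both factors to be iso) is not available on the left face, exactly because injectivity of $\iota_r^\ast$ on $\Hb^3(S_{r+1})$ is the unknown. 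So your supplementary step is not a detour but a genuine addition: it supplies the one ingredient the published chase leaves implicit. Your way of supplying it is sound: for $r\geq 2$ the $G_{r+1}$-stabilizers of generic $k$-tuples in $\clP_{r+1}$, $k\leq 5$, contain determinant-$(-1)$ elements (they contain $\OO(L^\perp)$ for $L$ the nondegenerate span of the tuple, and $\dim L^\perp\geq 1$), whence $\Linfty(\clP_{r+1}^k)^{S_{r+1}}=\Linfty(\clP_{r+1}^k)^{G_{r+1}}$ for $k\leq 5$ and the spectral sequence of \autoref{kernel_H3} runs verbatim for $S_{r+1}$ (the $\EE_1^{p,q}$-terms become $\Hb^q$ of the determinant-one stabilizers, which still vanish for $q=1,2$). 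You can in fact short-circuit the last stage: the equality of invariants gives $\Hb^3(S_{r+1}\curvearrowright\clP_{r+1})=\Hb^3(G_{r+1}\curvearrowright\clP_{r+1})\cong\ker\big(\iota_r^\ast|_{\Hb^3(G_{r+1})}\big)=0$ directly from \autoref{kernel_H3} and the \hyperlink{bootstrap_bis}{Key Lemma \emph{bis}}, with no need to re-run the secondary-stability argument of \autoref{0row} or to re-invoke weak bc-stability for $S_r$.

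One small point to tighten: $M_{r+1}\not\subset S_{r+1}$ does not formally follow from the quotient $M'_{r+1}\cong\OO_{2r-1}(\bbC)$ containing determinant-$(-1)$ elements, since the surjection $M_{r+1}\twoheadrightarrow M'_{r+1}$ of \autoref{3transitive} has a kernel and the relevant determinant is that of $M_{r+1}$ inside $\GL(V_{r+1})$. The conclusion is nonetheless correct: the elements of $M_{r+1}$ fixing $e_{r+1}$, $f_{r+1}$, $x_0$ and acting by an arbitrary element of $\OO\big(\langle x_0\rangle^\perp\cap V_r\big)\cong\OO_{2r-1}(\bbC)$ on the orthogonal complement realize determinant $-1$ directly.
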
   

\begin{proof}
Note, by examining the bottom face of the cube, that if $\res_r$ is an isomorphism, then $\tau_r$ is surjective. This, in turn, implies that all the arrows in the bottom face are isomorphisms. Relying on these isomorphisms, a similar analysis of, first, the back face; then, the left face; and finally, the front face of \eqref{cube}, complete the proof of the lemma. 
\end{proof}

\begin{proof}[Proof of \hyperlink{restriction_inj_bis}{Theorem \ref{restriction_inj} \emph{bis}} for $\mathbf{D}_r$]
We prove by induction that $\res_r: \Hb^3(\SL_{n(r)}(\bbC)) \to \Hb^3(S_r)$ is an isomorphism for every $r \geq r_0 = 3$. By \autoref{ind_step}, if $\res_r$ is an isomorphism for any $r \geq 3$, then so is $\res_{r+1}$. Thus, the claim follows once we establish that 
\[
	\res_{r_0} = \res_3: \Hb^3(\SL_6(\bbC)) \to \Hb^3(\SO_6(\bbC))
\]
is an isomorphism. The group $\SO_6(\bbC)$ admits an exceptional isogeny $\SL_4(\bbC) \twoheadrightarrow \SO_6(\bbC)$, as in e.g. \cite{vdWaerden,Tao}, which induces an isomorphism in bounded cohomology \cite[Cor. 8.5.2]{Monod-Book}. After applying \autoref{new_stability} for $\mathbf{A}_r$, we deduce the chain of isomorphisms
\begin{align*}
	\Hb^3(\SO_6(\bbC)) &\cong \Hb^3(\SL_4(\bbC)) \cong \Hb^3(\SL_6(\bbC)) \cong \bbR. 
\end{align*}
Thus, it suffices to prove that $\res_3$ is not identically zero. %Consider the commutative diagram
%\begin{equation*} 
%\begin{gathered}
%\xymatrixcolsep{5pc}
%\xymatrixrowsep{1.5pc}
%\xymatrix{\Hb^3(\SL_6(\bbC)) \ar[d]_{j_2^\ast}^\cong \ar[r]^{\res_3} & \Hb^3(\SO_6(\bbC)) \ar[d]^{\iota_2^\ast} \\
%\Hb^3(\SL_{4}(\bbC)) \ar@{^{(}->}[r]^{\res_2} & \Hb^3(\SO_4(\bbC))}
%\end{gathered}
%\end{equation*}
For this purpose, consider the front face of the commutative cube \eqref{cube}, with $r=2$. Since the bottom arrow $\res_2$ is injective by \autoref{lem_ind_base}, we deduce our claim. 
\end{proof}

\begin{proof}[Proof of \hyperlink{new_stability}{Theorem \ref{new_stability} \emph{bis}} for $\mathbf{D}_r$]
By \autoref{ind_step}, if $\res_r$ is an isomorphism for any $r \geq r_0=3$, then so is the map $\iota_r^\ast:\Hb^3(S_{r+1}) \to \Hb^3(S_r)$. \hyperlink{restriction_inj_bis}{Theorem \ref{restriction_inj} \emph{bis}} finishes the proof. 
\end{proof}

\begin{rem}
We note that $\res_2: \Hb^3(\SL_4(\bbC))\hookrightarrow \Hb^3(\SO_4(\bbC))$ is not an isomorphism, since $\Hb^3(\SL_4(\bbC)) \cong \bbR$, and $\SO_4(\bbC)$ is isogenous to $\SL_2(\bbC) \times \SL_2(\bbC)$, which has 2-dimensional $\Hb^3$.
\end{rem}
%\begin{rem}
%Arguing via the cube \eqref{cube} seems only necessary in the case of the $\mathbf{D}$ family. For the families $\mathbf{B, C}$, the proof can be simplified as follows. In both cases, the restriction 
%$
%	\tau_s:\Hb^\ast(G_s) \to \Hb^\ast(S_s)
%$
%is obviously an isomorphism (since $\OO_{2r+1}(\bbC) \cong \{\pm 1\} \times \SO_{2r+1}(\bbC)$, and the elements of $\Sp_{2n}(\bbC)$ have determinant 1). This implies that \hyperlink{bootstrap}{Key Lemma} hold \emph{verbatim} for the groups $S_s$, and hence, analyzing only the front face of the cube \eqref{cube} suffices. 
%\end{rem}

\section{Proof of Theorem 1}
Let $(\varepsilon,d) \in \{(+1,1),(-1,0),(+1,0)\}$. We will prove \hyperlink{main_thm_bis}{Theorem \ref{main_thm} \emph{bis}}, that is, that the comparison map $c^3:\Hb^3(S_r) \to \HH^3(S_r)$ is an isomorphism for all $r \geq r_0$, by induction on $r$. 

The base case $r=r_0$ holds due to the exceptional Lie algebra isomorphisms in low rank, which place $S_{r_0}$ inside the $\mathbf{A}_r$ family. For the induction step, consider the commutative diagram
\begin{equation*} 
\begin{gathered}
\xymatrixcolsep{5pc}
\xymatrixrowsep{1.2pc}
\xymatrix{\Hb^3(S_{r+1}) \ar[d]_{\iota_r^\ast}^\cong \ar[r]^-{c^3} & \HH^3(S_{r+1}) \ar[d]^{\iota_r^\ast}_\cong \\
\Hb^3(S_{r}) \ar[r]_{c^3}^\cong & \HH^3(S_r)}
\end{gathered}
\end{equation*}
The induction hypothesis is the fact that the bottom arrow an isomorphism. That the left arrow is an isomorphism is \hyperlink{new_stability}{Theorem \ref{new_stability}}. That the right one is an isomorphism is the stability of classical families in continuous cohomology, for which we argue as follows. For any non-compact semisimple Lie group $G$, the continuous cohomology $\HH^\ast(G)$ is isomorphic to the cohomology of its compact symmetric space (see e.g. \cite[\S 5]{Stas}). In the cases covered by \autoref{main_thm}, such compact symmetric spaces are diffeomorphic to compact classical groups. The stability of the (space) cohomology of those groups is recorded in \cite[Thm. III.6.5]{Toda-Mimura}. \qed

\section{About the Gromov norm on $\Hb^3$ of complex classical groups} \label{subsec:norm_D}

For the purpose of this final section, let us adapt the notation introduced in Section \ref{sec:notation} to cover also the complex family $\mathbf{A}_r$, as follows. For $\scF \in \{\mathbf{A,\,B,\,C,\,D}\}$, let $S_{\scF_r}$ denote, respectively, the group 
\[
	\SL_{r+1}(\bbC), \quad \SO_{2r+1}(\bbC),\quad \Sp_{2r}(\bbC),\quad \SO_{2r}(\bbC),
\]
in the complex family $\scF_r$. As explained in Subsection \ref{subsec:assumptions}, the choice of $\scF \in \{\mathbf{B,\,C,\,D}\}$ corresponds, respectively, to choosing $(\varepsilon,d) \in \{(+1,1),(-1,0),(+1,0)\}$, and we will assume the notation introduced in Section \ref{sec:notation} for those cases. 

Let $b_{\scF_r} \in \Hb^3(S_{\scF_r})$ be the continuous bounded cohomology class defined as
\begin{equation*}
	b_{\scF_r} \coloneqq \begin{cases}
		b_{r+1} & \mbox{ if } \clF= \mathbf{A}, \\[-2pt]
		\frac12 \, \res_{r}(b_{2r+1}) & \mbox{ if } \clF = \mathbf{B}, \\
		\res_{r}(b_{2r}) & \mbox{ if } \clF = \mathbf{C}, \\
		\frac12 \, \res_{r}(b_{2r}) & \mbox{ if } \clF = \mathbf{D}, \\
	\end{cases}
\end{equation*} 
where $b_n \in \Hb^3(\SL_n(\bbC))$ is the bounded Borel class, and 
$
	\res_r: \Hb^3(\SL_{n(r)}(\bbC)) \to \Hb^3(S_{\scF_r})
$
is the restriction map. 

The next statement is an immediate corollary of Bucher--Burger--Iozzi \cite[Thm. 2]{BBI} (see \autoref{thm:irrep}), which gives the computation of the Gromov norm of the bounded Borel class, and of \autoref{thm:norms}.
\begin{cor} \label{Gromov_norms}
For $\scF \in \{\mathbf{A,\,B,\,C}\}$ and $r \geq 1$, the Gromov norm of $b_{\scF_r} \in \Hb^3(S_{\scF_r})$ is
\[
	\Vert b_{\scF_r} \Vert = \clI(\scF_r) \cdot v_{\bbH^3}, 
\]
where $v_{\bbH^3}$ is the maximal volume of an ideal tetrahedron in $\bbH^3$, and the number $\clI(\scF_r) \in \bbZ$ is given in the table \vspace{7pt}
\begin{equation*}
\begin{array}{|c||c|c|c|c|} \hline
	\scF_r & \mathbf{A}_r & \mathbf{B}_r & \mathbf{C}_r  \\ \hline
	\clI(\scF_r) & \frac{r(r+1)(r+2)}{6} & \frac{r(r+1)(2r+1)}3 & \frac{r(4r^2-1)}3 \\ \hline
\end{array}
\end{equation*}
\end{cor}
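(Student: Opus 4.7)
The plan is to deduce the corollary as a direct consequence of Theorem \ref{thm:irrep} (Bucher--Burger--Iozzi) and Proposition \ref{thm:norms}, combined with the fact that the Gromov (semi)norm is homogeneous under real scalar multiplication. No new geometric input is required; everything is bookkeeping with the formula $\|b_n\| = n(n^2-1)/6 \cdot v_{\bbH^3}$.

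First, for the family $\mathbf{A}_r$, the definition gives $b_{\mathbf{A}_r} = b_{r+1}$, so that Theorem \ref{thm:irrep} applied with $n = r+1$ yields
\[
	\|b_{\mathbf{A}_r}\| = \|b_{r+1}\| = \frac{(r+1)((r+1)^2-1)}{6} \cdot v_{\bbH^3} = \frac{r(r+1)(r+2)}{6} \cdot v_{\bbH^3},
\]
which is the value $\clI(\mathbf{A}_r) \cdot v_{\bbH^3}$ recorded in the table.

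Next, for $\mathcal{F} \in \{\mathbf{B},\mathbf{C}\}$, I would invoke Proposition \ref{thm:norms}, which asserts that the restriction maps $\res_r: \Hb^3(\SL_{n(r)}(\bbC)) \to \Hb^3(S_{\mathcal{F}_r})$ are isometric in these two cases. Consequently,
\[
	\|\res_r(b_{n(r)})\| = \|b_{n(r)}\| = \frac{n(r)(n(r)^2-1)}{6}\cdot v_{\bbH^3},
\]
with $n(r) = 2r+1$ or $n(r) = 2r$ according to whether $\mathcal{F} = \mathbf{B}$ or $\mathcal{F} = \mathbf{C}$. Multiplying by the scalar factor prescribed in the definition of $b_{\mathcal{F}_r}$ and using homogeneity of the Gromov norm, I compute
\[
	\|b_{\mathbf{B}_r}\| = \tfrac{1}{2}\cdot\tfrac{(2r+1)(2r)(2r+2)}{6}\cdot v_{\bbH^3} = \tfrac{r(r+1)(2r+1)}{3}\cdot v_{\bbH^3},
\]
\[
	\|b_{\mathbf{C}_r}\| = \tfrac{(2r)(4r^2-1)}{6}\cdot v_{\bbH^3} = \tfrac{r(4r^2-1)}{3}\cdot v_{\bbH^3},
\]
matching $\clI(\mathbf{B}_r)\cdot v_{\bbH^3}$ and $\clI(\mathbf{C}_r)\cdot v_{\bbH^3}$, respectively.

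There is no real obstacle here: the essential geometric content has already been packaged into Theorem \ref{thm:irrep} (which in turn relies on the explicit realization of $b_n$ via the volume cocycle and the evaluation of the Gromov norm by ideal hyperbolic simplices) and into Proposition \ref{thm:norms} (where the isometric nature of $\res_r$ for types $\mathbf{B}_r$ and $\mathbf{C}_r$ was obtained through the irreducible representation $\bar\Pi_r: \SL_2(\bbC) \to S_r$). The only minor point worth checking is that dividing by $2$ in the definitions of $b_{\mathbf{B}_r}$ genuinely rescales the norm by $1/2$, which holds because the Gromov seminorm satisfies $\|c\,\alpha\| = |c|\cdot\|\alpha\|$ for $c \in \bbR$. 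With this remark, the corollary is established.
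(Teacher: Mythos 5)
Your proposal is correct and is exactly the paper's intended argument: the corollary is stated there as an immediate consequence of \autoref{thm:irrep} and \autoref{thm:norms}, and your arithmetic with the scaling factors in the definition of $b_{\scF_r}$ checks out in all three cases.
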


It was indicated to us by Marc Burger that the number $\clI(\scF_r)$ equals the Dynkin index of the principal $\SL_2(\bbC)$-homomorphism $\SL_2(\bbC) \to S_{\clF_r}$. We refer the reader to \cite{Oni} for the definition of Dynkin index, and \cite{Kos} for the theory of principal $\SL_2(\bbC)$-homomorphisms. The Dynkin index of the principal $\SL_2(\bbC)$-homomorphisms has been computed for all complex simple Lie groups \cite{Panyushev}, including exceptional ones. For $\clF=\mathbf{D}$, it is given by the formula
\[
	\clI(\mathbf{D}_r) = \frac{r(r-1)(2r-1)}3.
\]
%\begin{equation} \label{table}
%\begin{array}{|c||c|c|c|c|} \hline
%	\scF_r & \mathbf{A}_r & \mathbf{B}_r & \mathbf{C}_r & \mathbf{D}_r  \\ \hline
%	\clI(\scF_r) & \frac{r(r+1)(r+2)}{6} & \frac{r(r+1)(2r+1)}3 & \frac{r(4r^2-1)}3 & \frac{r(r-1)(2r-1)}3 \\ \hline
%\end{array}
%\end{equation}

Relying on the equality $\Vert b_{\mathbf{D}_2} \Vert = 2 \, v_{\bbH^3} = \clI(\mathbf{D}_2) \, v_{\bbH^3}$ established in \autoref{norm_low}, we conjecture that \autoref{Gromov_norms} holds also for $\scF=\mathbf{D}$.

\appendix
\section{Proofs of cross-ratio parametrization statements} \label{app:computation}

This appendix is devoted to the proof of the parametrizations via $\omega$-cross-ratios of the configuration spaces of four and five points in the variety $\clP_r$ of isotropic points. These were stated in the main text as \autoref{param4} and \autoref{param5}, respectively. We continue to adhere to the notation and standing assumptions in Subsections \ref{subsec:notation} and \ref{subsec:assumptions}. 

\subsection{Proof of \autoref{param4}}
Let us assume that $r \geq 2$. We consider the Borel functions $\Gamma,\,\Delta,\,\Delta^{1/2}, \, \varphi_-, \, \varphi_+:\bbC^2 \to \bbC$, defined as
\begin{align} 
	\Gamma(z_1,z_2) &\coloneqq 1 - z_1 - z_2, \label{def_gamma} \\
	\Delta(z_1,z_2) &\coloneqq \Gamma(z_1,z_2)^2-2(1+\varepsilon) \cdot z_1z_2, \label{def_delta}\\[-3pt]
	\Delta^{1/2}(z_1,z_2)  &\coloneqq \begin{cases}
		\Gamma(z_1,z_2) & \mbox{ if } \varepsilon = -1, \\
		\sqrt{\Delta(z_1,z_2)} & \mbox{ if } \varepsilon = +1, 
	\end{cases} \label{def_sqrt_delta} \\
	\varphi_{\eta}(z_1,z_2) &\coloneqq \left(\frac{\Delta^{1/2} \ + \ \eta\,\Gamma}2\right)(z_1,z_2), \quad \eta \in \{\pm 1\}, \label{def_varphi}
\end{align}
where $\sqrt{-}:\bbC \to \bbC$ is a Borel choice of complex square root. Abusively, we will use the same symbols to denote the Borel functions $\clP_r^4 \to \bbC$ defined a.e. by precomposing with $\pi_3 = (\CR_1,\CR_2)$. 

\begin{defn}
We let $\clP_r^{\{\!\{4\}\!\}}$ be the subset of $\clP_r^{\{4\}}$ consisting of tuples $(p_0,p_1,p_2,p_3)$ such that the subspace $\langle p_0,p_1,p_2,p_3 \rangle < V_r$ is non-degenerate (see \eqref{Pk_brackets} for the definition of $\clP_r^{\{4\}}$) . 
\end{defn}

\begin{lem} \label{delta_nondeg}
A tuple $[v_0,v_1,v_2,v_3] \in \clP_r^{(4)}$ belongs to $\clP_r^{\{\!\{4\}\!\}}$ if and only if the associated Gram matrix $(\omega(v_i,v_j))_{i,j}$
is non-singular. Moreover,
\begin{equation*} 
	\clP_r^{\{\!\{4\}\!\}} =\{(p_0,p_1,p_2,p_3) \in \clP_r^{(4)}\mid \Delta(p_0,p_1,p_2,p_3) \neq 0\}, 
\end{equation*}
and in particular, $\clP_r^{\{\!\{4\}\!\}}$ is a $G_r$-invariant, co-null Borel subset of $\clP_r^4$. 
\end{lem}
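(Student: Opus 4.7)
Fix any lifts $v_0,v_1,v_2,v_3\in V_r$ of the points $p_0,\ldots,p_3$, write $a_{ij}\coloneqq \omega(v_i,v_j)$, and let $M=(a_{ij})_{i,j}$ be the associated Gram matrix. I would first observe that on $\clP_r^{(4)}$ one has $a_{ij}\neq 0$ for $i\neq j$, so the relevant ratios are well defined. Since $r\geq 2$ forces $\dim V_r\geq 4$, the condition that the 4-tuple $\mathbf{p}$ lies in $\clP_r^{\{4\}}$ is equivalent to the linear independence of $v_0,\ldots,v_3$. Under that hypothesis, $M$ represents $\omega\vert_W$ in the basis $\{v_i\}$ of $W\coloneqq\langle v_0,\ldots,v_3\rangle$, and hence is non-singular if and only if $(W,\omega\vert_W)$ is non-degenerate, i.e. $\mathbf{p}\in\clP_r^{\{\!\{4\}\!\}}$. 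Conversely, non-singularity of $M$ forces the $v_i$ to be linearly independent (a dependence relation would give one among the rows of $M$), so the hypothesis of general position is automatic from the Gram condition. This proves the first claim and, simultaneously, shows that the set in the display can be described inside $\clP_r^{(4)}$ rather than inside $\clP_r^{\{4\}}$.

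Next, I would compute $\det M$ explicitly. Using $a_{ii}=0$ and $a_{ji}=\varepsilon\,a_{ij}$, a routine cofactor expansion along the first row gives the $\varepsilon$-homogeneous expression
\[
\det M \;=\; a_{01}^2a_{23}^2 + a_{02}^2a_{13}^2 + a_{03}^2a_{12}^2 - 2\,a_{01}a_{02}a_{13}a_{23} - 2\,a_{02}a_{03}a_{12}a_{13} - 2\varepsilon\,a_{01}a_{03}a_{12}a_{23}.
\]
(A convenient consistency check: for $\varepsilon=-1$ this collapses to $(a_{01}a_{23}-a_{02}a_{13}+a_{03}a_{12})^{2}$, the square of the Pfaffian.) Dividing by $a_{01}^2a_{23}^2\neq 0$ and recognising
\[
\CR_1(\mathbf{p}) \;=\; \frac{a_{02}a_{13}}{a_{01}a_{23}}, \qquad \varepsilon\,\CR_2(\mathbf{p}) \;=\; \frac{a_{03}a_{12}}{a_{01}a_{23}},
\]
from \autoref{def:cartan_cr}, I obtain
\[
\frac{\det M}{a_{01}^{\,2}\,a_{23}^{\,2}} \;=\; 1+\CR_1^{\,2}+\CR_2^{\,2}-2\CR_1-2\CR_2-2\varepsilon\,\CR_1\CR_2 \;=\; \Gamma^{2}-2(1+\varepsilon)\,\CR_1\CR_2 \;=\; \Delta(\mathbf{p}),
\]
with $\Gamma=1-\CR_1-\CR_2$ as in \eqref{def_gamma}. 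Since $a_{01}^2a_{23}^2\neq 0$ on $\clP_r^{(4)}$, the equivalence $\det M\neq 0 \iff \Delta(\mathbf{p})\neq 0$ follows, which together with the first paragraph gives the displayed description of $\clP_r^{\{\!\{4\}\!\}}$.

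Finally, for the $G_r$-invariance, Borelness and co-nullity, it suffices to note that $\CR_1$ and $\CR_2$ are $G_r$-invariant holomorphic functions on the $G_r$-invariant Zariski-open subset $\clP_r^{(4)}\subset\clP_r^{4}$ (see \autoref{def:cartan_cr}), and $\Delta$ is a polynomial in them; hence $\{\Delta\neq 0\}\cap\clP_r^{(4)}$ is a Zariski-open $G_r$-invariant subset of $\clP_r^{4}$, and therefore Borel and co-null.

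The only non-routine point is the cofactor expansion of the ``zero-diagonal $\varepsilon$-symmetric'' $4\times 4$ matrix: one has to keep careful track of the single $\varepsilon$ that survives the contraction (all other $\varepsilon$'s cancel in pairs), so that the resulting polynomial lines up exactly with $\Delta$ in the form $\Gamma^2-2(1+\varepsilon)\CR_1\CR_2$. The antisymmetric case serves as a built-in sanity check via the Pfaffian identity.
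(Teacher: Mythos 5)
Your proof is correct and follows essentially the same route as the paper: characterize singularity of the Gram matrix via linear dependence versus degeneracy of the span, then identify the normalized Gram determinant with $\Delta$ through the cross-ratios. Your explicit identity $\det M=\Delta(\mathbf p)\cdot(\omega(v_0,v_1)\,\omega(v_2,v_3))^2$ is the correct (scaling-consistent) form of the relation the paper states as a quotient, and your Pfaffian check for $\varepsilon=-1$ confirms it.
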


\begin{proof}
The matrix $(\omega(v_i,v_j))_{i,j}$ is singular if and only if there exists a non-trivial linear combination $v= \sum_i \alpha_i v_i$ such that $\omega(v_k,v) = 0$ for every $k \in \{0,\ldots,3\}$. This means precisely that either $v=0$ and the vectors $v_0,\ldots,v_3$ are linearly dependent, or $\langle v_0,\ldots,v_3\rangle$ is degenerate. The second part of the lemma follows from the fact that the Gram determinant of $\{v_0,v_1,v_2,v_3\}$ equals the quotient 
$
\Delta[v_0,v_1,v_2,v_3]\,/(\omega(v_0,v_1)\,\omega(v_2,v_3))^2. 
$
\end{proof}

The subspace $L<V_r$ generated by any four lines $(p_0,p_1,p_2,p_3) \in \clP_r^{\{\!\{4\}\!\}}$ is isomorphic to the sum $\clH \oplus \clH$. In \autoref{key lemma} below, we will construct an adapted basis of $L$ in terms of any such points $p_0,p_1,p_2,p_3$. For that purpose, we introduce perpendicular projections with respect to $\omega$ onto hyperbolic planes, and list relevant properties.

\begin{defn}
Let $\clH_0 < V_r$ be a hyperbolic plane and $\{v_0,v_1\}$ a basis of isotropic vectors of $\clH_0$. The \emph{perpendicular projection} $\mbox{proj}=\mbox{proj}_{\clH_0}\colon V_r \twoheadrightarrow \clH_0$ onto $\clH_0$ is given by
\begin{equation*}
	\mbox{proj}(v) \coloneqq {\displaystyle\frac{\omega(v_1,v)}{\omega(v_1,v_0)} \, v_0 + \frac{\omega(v_0,v)}{\omega(v_0,v_1)}\, v_1,} 
\end{equation*}
with kernel $\clH_0^\perp$. We also let 
$\widehat{\,\cdot\,} \colon V_r \to \clH_0^\perp$ be the \emph{complement map}, defined as $\widehat{v} \coloneqq v-\mbox{proj}(v)$.
\end{defn}
 
\begin{lem} \label{lemma_hats}
Let $[v_0,v_1,v_2,v_3] \in \clP_r^{(4)}$, let $\widehat{\,\cdot\,}$ denote the complement map to $\langle v_0,v_1 \rangle$, let $v,w \in V_r$ be any two vectors, and let $u \in V_r$ be isotropic. Then the following identities hold:
	\begin{enumerate}[label=\emph{(\roman*)},itemsep=1pt]
		\item $\omega(\widehat{v},\widehat{w}) = \omega(\widehat{v},w) = \omega(v,\widehat{w})$.
		\item $q(\widehat{u}) = -(1+\varepsilon)\, \big( \omega(v_1,u) \, \omega(u,v_0)/\omega(v_0,v_1) \big)$.
		\item $\omega(\widehat{v_2},\widehat{v_3})= \omega(v_2,v_3) \cdot \Gamma[v_0,v_1,v_2,v_3]$. 
		\item $\omega(\widehat{v_2},\widehat{v_3})^2 - q(\widehat{v_2}) \, q(\widehat{v_3}) = \omega(v_2,v_3)^2 \cdot \Delta[v_0,v_1,v_2,v_3]$.
	\end{enumerate}
\end{lem}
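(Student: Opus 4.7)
The plan is to prove all four identities by direct computation using three elementary facts: (a) $\widehat{v}\in \langle v_0,v_1\rangle^\perp$ by construction, (b) $v_0$ and $v_1$ are isotropic, and (c) the $\varepsilon$-symmetry of $\omega$ together with the useful arithmetic identity $(1+\varepsilon)^2 = 2(1+\varepsilon)$, which holds since $\varepsilon^2 = 1$.

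Identity (i) is immediate from the definitions: since $\mathrm{proj}(w)\in\langle v_0,v_1\rangle$, we have $\omega(\widehat{v},\mathrm{proj}(w))=0$ by (a), so $\omega(\widehat{v},\widehat{w})=\omega(\widehat{v},w)$; the second equality follows by swapping the roles of $v$ and $w$ and applying $\varepsilon$-symmetry. For (ii), expand $q(\widehat{u}) = q(u)-\omega(u,\mathrm{proj}(u))-\omega(\mathrm{proj}(u),u)+q(\mathrm{proj}(u))$; the first term vanishes because $u$ is isotropic, while $q(\mathrm{proj}(u))$ reduces to a single cross-term in $\omega(v_0,v_1)$ because $v_0,v_1$ are isotropic. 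Collecting the three remaining summands and applying $\omega(v_0,v_1)=\varepsilon\omega(v_1,v_0)$ produces a factor of $(1+\varepsilon)$ out of the symmetrization, and after absorbing a further $\varepsilon$ via $\varepsilon(1+\varepsilon)=(1+\varepsilon)$, one recovers the stated expression.

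For (iii), I will use (i) to rewrite $\omega(\widehat{v_2},\widehat{v_3}) = \omega(v_2,v_3) - \omega(v_2,\mathrm{proj}(v_3))$. Dividing both sides by $\omega(v_2,v_3)$ and expanding $\mathrm{proj}(v_3)$ via its definition, the two correction terms become exactly $\CR_1$ and $\CR_2$ (after using $\varepsilon$-symmetry to convert $\omega(v_i,v_j)$ into $\omega(v_j,v_i)$ where needed), so the ratio equals $1-\CR_1-\CR_2 = \Gamma$. Identity (iv) is then obtained by combining (ii) and (iii): applying (ii) to both $v_2$ and $v_3$ gives
\[
q(\widehat{v_2})\,q(\widehat{v_3}) = (1+\varepsilon)^2\,\frac{\omega(v_1,v_2)\,\omega(v_2,v_0)\,\omega(v_1,v_3)\,\omega(v_3,v_0)}{\omega(v_0,v_1)^2},
\]
and after rearranging with $\varepsilon$-symmetry the fraction equals $\varepsilon\,\omega(v_2,v_3)^2\,\CR_1\CR_2$; the collapse $(1+\varepsilon)^2 = 2(1+\varepsilon)$ together with $\varepsilon(1+\varepsilon)=(1+\varepsilon)$ turns the coefficient into $2(1+\varepsilon)$. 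Subtracting from $\omega(\widehat{v_2},\widehat{v_3})^2 = \omega(v_2,v_3)^2\,\Gamma^2$ and comparing with \eqref{def_delta} gives the desired equality.

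The main obstacle is purely bookkeeping: one must carry the $\varepsilon$-factors consistently through multiple inversions of arguments. The observations $(1+\varepsilon)^2=2(1+\varepsilon)$ and $\varepsilon(1+\varepsilon)=(1+\varepsilon)$, which both rest on $\varepsilon^2=1$, are what allow a single formula to cover the symmetric and antisymmetric cases uniformly.
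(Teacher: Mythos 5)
Your proposal is correct and follows essentially the same route as the paper: identity (i) from $\widehat{v}\perp\langle v_0,v_1\rangle$, then (ii) and (iii) by expanding the definition of the complement map, and (iv) by combining (ii) and (iii) with the expression of the resulting ratios as $\CR_1$ and $\CR_2$. The only (cosmetic) difference is that the paper verifies (iv) by splitting into the cases $\varepsilon=-1$ (where $q(\widehat{v_2})q(\widehat{v_3})=0$ and $\Delta=\Gamma^2$) and $\varepsilon=+1$, whereas your identities $(1+\varepsilon)^2=2(1+\varepsilon)$ and $\varepsilon(1+\varepsilon)=1+\varepsilon$ handle both cases uniformly.
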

\begin{proof}
Since $\omega(\widehat{v},\mbox{proj}(w)) = 0$, we have $\omega(\widehat{v},\widehat{w})=\omega(\widehat{v},w) - \omega(\widehat{v},\mbox{proj}(w)) =\omega(\widehat{v},w)$, the first equality in (i). The second one is analogous. Items (ii) and (iii) follow immediately from (i) and the definition of the complement map. For $\varepsilon = -1$, the identity (iv) holds since $\Delta = \Gamma^2$ and $q(\widehat{v_2}) \, q(\widehat{v_3}) = 0$. If $\varepsilon = +1$, then (iv) follows from (iii) and the next computation:
\begin{align*}	
	q(\widehat{v_2}) \, q(\widehat{v_3}) &= 4 \cdot  \frac{\omega(v_1,v_2) \, \omega(v_2,v_0)}{\omega(v_0,v_1)} \cdot \frac{\omega(v_1,v_3) \, \omega(v_3,v_0)}{\omega(v_0,v_1)} \\
	&= 4 \cdot \omega(v_2,v_3)^2 \cdot \frac{\omega(v_1,v_3) \, \omega(v_2,v_0)}{\omega(v_1,v_0) \, \omega(v_2,v_3)} \cdot \frac{\omega(v_2,v_1) \, \omega(v_0,v_3)}{\omega(v_2,v_3) \, \omega(v_0,v_1)} \\
	&= 4 \cdot \omega(v_2,v_3)^2 \cdot (\CR_1^{-1} \cdot \CR_2)[v_0,v_1,v_2,v_3]. \qedhere
\end{align*}
\end{proof}

The following technical lemma plays a key role in the proof of \autoref{param4}. 

\begin{lem} \label{key lemma}
Let $\mathbf{p} = [v_0,v_1,v_2,v_3] \in \clP_r^{\{\!\{4\}\!\}}$, let $\clH_0 \coloneqq \langle v_0,v_1 \rangle$ be the hyperbolic plane generated by the first two vectors, and let $\widehat{\,\cdot\,}$ denote the complement map to $\clH_0$. We also let the quantities $\Pi=\Pi(v_0,v_1,v_2)$, $\lambda=\lambda(v_0,v_1,v_2)$, $\mu = \mu(v_0,v_1,v_2,v_3)$ be defined as
\begin{align} \label{lambda,pi}
	\Pi &\coloneqq \omega(v_0,v_1) \, \omega(v_1,v_2) \, \omega(v_2,v_0), \\ 
	\lambda &\coloneqq \frac{\sqrt{\Pi}}{\omega(v_1,v_0)} = \frac{\omega(v_1,v_2) \, \omega(v_0,v_2)}{\sqrt{\Pi}}, \\
	\label{mu}
\mu &\coloneqq \lambda^{-1} \cdot \omega(v_3,v_2).
\end{align}
Then $
	\clH_0' \coloneqq \clH_0^\perp \cap \langle v_0,v_1,v_2,v_3 \rangle = \langle \widehat{v_2},\widehat{v_3} \rangle  
$
is a hyperbolic plane, and the unique solutions $e', f'\in \clH_0'$ of the next linear systems form an adapted basis of $\clH_0'$:
\begin{equation} \label{systems}
	\left\{ \begin{array}{rl}
		\omega(e',v_2) &= \lambda \\
		\omega(e',v_3) &= \mu \,\varphi_-(\mathbf{p})
	\end{array} \right\} \qand
	\left\{ \begin{array}{rl} 
		\omega(f',v_2) &= -\lambda, \\
		\omega(f',v_3) &= \varepsilon \mu \,\varphi_+(\mathbf{p}),
	\end{array} \right\}
\end{equation}

\end{lem}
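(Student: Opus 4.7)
The plan is to verify the statement in three steps: identify $\clH_0'$ and show it is a hyperbolic plane, establish unique solvability of the systems \eqref{systems}, and then check that the resulting vectors satisfy the adapted-basis relations.

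First, I would show that $\clH_0' = \langle \widehat{v_2}, \widehat{v_3}\rangle$ is a hyperbolic plane. The identification follows from the orthogonal decomposition $V_r = \clH_0 \oplus \clH_0^\perp$: every element of $\langle v_0, v_1, v_2, v_3\rangle$ is a linear combination of $v_0, v_1, \widehat{v_2}, \widehat{v_3}$, and belongs to $\clH_0^\perp$ if and only if its $v_0, v_1$ coefficients vanish. For non-degeneracy, the Gram matrix of $\{\widehat{v_2}, \widehat{v_3}\}$ has determinant
\[
	q(\widehat{v_2})\,q(\widehat{v_3}) - \varepsilon\,\omega(\widehat{v_2},\widehat{v_3})^2 \;=\; -\varepsilon\,\omega(v_2,v_3)^2\,\Delta[\mathbf{p}],
\]
by \autoref{lemma_hats}(iv), which is nonzero since $\mathbf{p} \in \clP_r^{\{\!\{4\}\!\}}$ forces $\Delta \neq 0$ by \autoref{delta_nondeg}, and $\omega(v_2, v_3) \neq 0$ on $\clP_r^{(4)}$. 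Any non-degenerate 2-dimensional $\varepsilon$-symmetric space over $\bbC$ is isometric to $\clH_\varepsilon$.

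Next, since $e', f' \in \clH_0^\perp$, \autoref{lemma_hats}(i) allows me to replace $v_i$ by $\widehat{v_i}$ in every equation of \eqref{systems}. Expressing the unknowns in the basis $\{\widehat{v_2}, \widehat{v_3}\}$, each system becomes a $2\times 2$ linear system over $\bbC$ whose coefficient matrix is the (non-singular) Gram matrix of $\{\widehat{v_2}, \widehat{v_3}\}$; Cramer's rule then determines $e'$ and $f'$ uniquely, together with explicit expressions for their coefficients.

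Finally, I would verify the adapted-basis conditions $q(e') = q(f') = 0$ and $\omega(f', e') = 1$, identifying $\{f', e'\}$ with the standard basis $\{e_1, f_1\}$ of $\clH_\varepsilon$. Writing $e' = \alpha \widehat{v_2} + \beta \widehat{v_3}$, $\varepsilon$-symmetry combined with \eqref{systems} gives $q(e') = \alpha \lambda + \beta \mu\,\varphi_-(\mathbf{p})$. Substituting the Cramer expressions for $\alpha, \beta$ and using $q(\widehat{v_2}) = -(1+\varepsilon)\lambda^2$ (from \autoref{lemma_hats}(ii) and the definition of $\lambda$), $\lambda\mu = \omega(v_3, v_2)$, and $\omega(\widehat{v_2}, \widehat{v_3}) = \omega(v_2, v_3)\,\Gamma$, the numerator of $q(e')$ is proportional to $\CR_1\CR_2 + \varphi_-(\varphi_- + \Gamma)$, which vanishes by the identities $\varphi_- + \Gamma = \varphi_+$ and $\varphi_-\varphi_+ = -(1+\varepsilon)\CR_1\CR_2/2$, both immediate from \eqref{def_delta}--\eqref{def_varphi}. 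The same argument with $\varphi_+ - \Gamma = \varphi_-$ yields $q(f') = 0$, and a parallel Cramer computation, combined with the identity $\Delta = \Gamma^2 - 2(1+\varepsilon)\CR_1\CR_2$, gives $\omega(f', e') = 1$ by cancellation of the Gram determinant against the numerator.

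The main obstacle is the algebraic bookkeeping in this last step. The antisymmetric case $\varepsilon = -1$ trivializes, since $q(\widehat{v_2}) = q(\widehat{v_3}) = 0$ and $\varphi_- = 0$, so that both isotropy conditions hold almost automatically and the normalization reduces to the single identity $\lambda\mu \Gamma = \omega(v_2, v_3) \Gamma$. In the symmetric case $\varepsilon = +1$, one must track carefully the interplay of $\lambda, \mu, \varphi_\pm, \Gamma, \Delta^{1/2}$ with the cross-ratios and with the entries $\omega(v_i, v_j)$, exploiting the $\varphi_+ \leftrightarrow \varphi_-$ symmetry built into \eqref{def_varphi}.
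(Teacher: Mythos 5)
Your proposal is correct and follows essentially the same route as the paper: both express $e',f'$ in the basis $\{\widehat{v_2},\widehat{v_3}\}$ of $\clH_0'$, invoke the non-vanishing of the determinant $-\omega(v_2,v_3)^2\,\Delta(\mathbf{p})$ (via \autoref{delta_nondeg} and \autoref{lemma_hats}) for unique solvability, and verify the adapted-basis relations through the identities $\varphi_-+\Gamma=\varphi_+$ and $\varphi_+\varphi_-=-(1+\varepsilon)\CR_1\CR_2/2$. The only quibble is a sign in the symplectic case: since $\lambda\mu=\omega(v_3,v_2)=\varepsilon\,\omega(v_2,v_3)$, the normalization there comes out as $\omega(e',f')=1$ (consistent with \eqref{omega_pairing}) rather than $\omega(f',e')=1$.
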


\begin{proof}
After writing $e'$ and $f'$ as linear combinations of the basis $\{\widehat{v_2},\widehat{v_3}\}$ of $\clH'$, one obtains that the coefficient matrix of both systems in \eqref{systems} has determinant
\[
	q(\widehat{v_2}) \, q(\widehat{v_3}) - \omega(\widehat{v_2},\widehat{v_3})^2 = - \omega(v_2,v_3)^2 \cdot \Delta(\mathbf{p}).
\]
This quantity is non-zero in virtue of \autoref{delta_nondeg}. Their explicit solutions are:%, and verify that they indeed satisfy \eqref{systems} and are adapted bases of $\clH'$ below:
\begin{align} 
	\mbox{For } \varepsilon = -1: & \ \ \left\{\!\!\begin{array}{rl}
	e' &= (\lambda/\omega(\widehat{v_3},\widehat{v_2}))\cdot\widehat{v_3}, \\
	f' &= (-\mu /\omega(v_2,v_3)) \cdot \widehat{v_2} - (\lambda/\omega(\widehat{v_3},\widehat{v_2})) \cdot \widehat{v_3}.
\end{array} \right. \label{ef_-1} \\
	\mbox{For } \varepsilon = +1: & \ \ \left\{\!\!\begin{array}{rl}
	e' &= \phantom{-} \lambda/q(\widehat{v_2}) \cdot (\widehat{v_2} - r),  \\
	f' &= - \lambda/q(\widehat{v_2}) \cdot (\widehat{v_2} + r),
\end{array} \!\!\right. \label{ef_+1}
\end{align}
where 
\[
	r \coloneqq \frac{\omega(\widehat{v_2},\widehat{v_3}) \cdot \widehat{v_2}-q(\widehat{v_2}) \cdot \widehat{v_3}}{\omega(v_2,v_3) \cdot \Delta^{1/2}(\mathbf{p})} \in \clH'. \vspace{4pt} 
\]
The next identities are consequences of \autoref{lemma_hats} and facilitate the computations that verify that $e'$ and $f'$ as above are actually solutions of \eqref{systems} and an adapted basis of $\clH'_0$:
\begin{align*}
	(\varepsilon = -1) \quad& \omega(\widehat{v_2},v_2) = q(\widehat{v_2}) = 0, \quad \omega(\widehat{v_3},v_3) = q(\widehat{v_3}) = 0, \\
	 &\omega(\widehat{v_2},v_3) = \omega(v_2,\widehat{v_3}) = \omega(\widehat{v_2},\widehat{v_3}) = \omega(v_2,v_3) \cdot \Gamma(\mathbf{p}) \\[-3pt]
	 &\qquad \quad \ \ = \omega(v_2,v_3) \cdot \Delta^{1/2}(\mathbf{p}) \neq 0. \\[5pt]
	 (\varepsilon = +1) \quad& \omega(r,v_2) = \omega(r,\widehat{v_2}) = 0, \quad q(r) = -q(\widehat{v_2}) = 2\lambda^2 \neq 0,  \\
	 &\omega(r,v_3) = \omega(r,\widehat{v_3}) = \omega(v_2,v_3) \cdot \Delta^{1/2}(\mathbf{p}) \neq 0. \qedhere
\end{align*}
\end{proof}

%Relying on the properties of the vectors $e',f'$ from \autoref{key lemma}, we prove \autoref{param4}.

\begin{proof}[Proof of \autoref{param4}]
Let $r \geq 2$. The theorem is proven once we establish the existence of a co-null subset $\Omega_3 \subset \bbC^2$ and a Borel map
$
	\Phi_3: \Omega_3 \to \clP_r^{4}
$
with the next three properties:
\begin{enumerate}[label=(\roman*),leftmargin=25pt]
	\item $\pi_3(\clP_r^{\{\!\{4\}\!\}}) \subset \Omega_3$.
	\item $\Phi_3(\Omega_3) \subset \clP_r^{\{\!\{4\}\!\}}$ and $\pi_3 \, \circ \, \Phi_3 = \id$.
	\item For all $\mathbf{p} \in \clP_r^{\{\!\{4\}\!\}}$, there exists $g \in G_r$ such that $\Phi_3 \, \circ \, \pi_3(\mathbf{p}) = g\, \mathbf{p}$.
\end{enumerate}

We proceed now to their construction. Let us define 
\begin{equation} \label{omega3}
	\Omega_3 \coloneqq \left\{(a_1,a_2) \in (\bbC^\times)^2\mid \  \Delta(a_1,a_2) \neq 0 \right\}.
\end{equation}
Note that $\Omega_3$ is a non-empty Zariski-open subset of $\bbC^2$, hence co-null, and condition (i) follows immediately from \autoref{delta_nondeg}. We define the isotropic vector $\phi_2$ and the Borel map $\phi_3 \colon \Omega_3 \to \langle e_r,e_{r-1},f_{r-1},f_r \rangle$ by
\begin{align*}
	\phi_2 &\coloneqq e_r + e_{r-1} - f_{r-1} + f_r, \\
	\phi_3(a_1,a_2) &\coloneqq a_1 \ e_r + \varphi_-(a_1,a_2) \ e_{r-1} + \varepsilon \, \varphi_+(a_1,a_2) \ f_{r-1} + \varepsilon \, a_2 \ f_r.
\end{align*}
The vector $\phi_3(a_1,a_2)$ is isotropic for any $(a_1,a_2) \in \Omega_3$. %Indeed, if $\varepsilon = +1$, then
%\begin{align*}
%	q\big(\phi_3(\mathbf{a})\big) &= 2\big(a_1^{-1}a_2 +\varphi_-(a_1,a_2) \cdot \varphi_+(a_1,a_2)\big)  \\
%	&= 2\left(a_1^{-1}a_2 +\frac{(\Delta - \Gamma^2)(a_1^{-1},a_2)}4\right) \\
%	&= 2\left(a_1^{-1}a_2 +\frac{-4a_1^{-1}a_2}4\right) = 0.
%\end{align*}
%
We let now $\Phi_3: \Omega_3 \to \clP_r^4$ be the map
\begin{equation} \label{bigphi3}
	 \Phi_3(a_1,a_2) \coloneqq \left[e_r,\,f_r,\,\phi_2,\, \ \phi_3(a_1,a_2)\right].
\end{equation}
The first three points in the triple are precisely the representative generic 3-tuple used in the proof of \autoref{3transitive}. After abbreviating $\phi_3 \equiv \phi_3(a_1,a_2)$, the identities given below hold true, implying that none of those quantities is zero and, hence, that $\Phi_3$ ranges onto $\clP_r^{(4)}$.
\begin{equation} \label{omega_pairing}
\begin{array}{lllllll}
	\omega(e_r,f_r) = 1, && \omega(e_r,\phi_2) = 1, && \omega(e_r,\phi_3) = \varepsilon a_2, \\
	\omega(f_r,\phi_2) = \varepsilon, &&
	\omega(f_r,\phi_3) = \varepsilon a_1, &&
	\omega(\phi_2,\phi_3) = \varepsilon. 
\end{array} \end{equation} 

From \eqref{omega_pairing}, we also derive that $\CR_i\big(\Phi_3(a_1,a_2)\big) = a_i$ for $i = 1,2$. Together with \autoref{delta_nondeg}, this implies that $\Phi_3(a_1,a_2)$ is in $\clP_r^{\{\!\{4\}\!\}}$ and that $\pi_4 \, \circ \, \Phi_3(a_1,a_2) = (a_1,a_2)$, establishing (ii). 

For (iii), we fix an arbitrary $\mathbf{p} = [v_0,v_1,v_2,v_3] \in \clP_r^{\{\!\{4\}\!\}}$. As in \autoref{key lemma}, we let $\lambda, \Pi, \mu$ be the quantities defined in function of $(v_0,v_1,v_2,v_3)$ from \eqref{lambda,pi}--\eqref{mu}, and $e',\,f'$ be the solutions of the systems \eqref{systems}. Then we set
\begin{equation} \label{4-basis}
	e'_r \coloneqq \frac{\lambda \, v_1}{\omega(v_1,v_2)}, \quad e'_{r-1} \coloneqq e', \quad f'_{r-1} \coloneqq f', \quad f'_r \coloneqq \frac{\omega(v_1,v_2)\,v_0}{\sqrt{\Pi}}.
\end{equation}
Observe that $\{e'_r,\,f'_r\}$ is an adapted basis of the hyperbolic plane $\clH_r \coloneqq \langle v_0,v_1 \rangle < V_r$, and that $\{e'_{r-1},f'_{r-1}\}$ is an adapted basis of $\clH_{r-1} \coloneqq \clH_r^\perp \cap \langle v_0,v_1,v_2,v_3 \rangle$. By Witt's lemma, we extend $\{e'_r,e'_{r-1},f'_{r-1},f'_r\}$ to a full adapted basis $\scB_r'$ of $V_r$. The automorphism $T \in \GL(V_r)$ that maps the standard adapted basis $\scB_r$ to $\scB'_r$ lies in $G_r$, and therefore, so does 
\begin{equation} \label{defn_g}
g = g(v_0,v_1,v_2,v_3) \coloneqq T^\top J_r.
\end{equation} 
Note that for every $i \in \{0,\ldots,3\}$, we have
\begin{align*}
	g \cdot v_i %&= \sum_{j=1}^{r} \big(\omega(e'_j,v_i)\, e_j +  \omega(f'_j,v_i)\, f_j \big) \ + \ d \, \omega(h',v_i) \, h \\
	&= \omega(e'_r,v_i)\, e_r + \omega(e'_{r-1},v_i)\, e_{r-1} + \omega(f'_{r-1},v_i)\, f_{r-1} + \omega(f'_r,v_i)\, f_r.
\end{align*}
Writing in coordinates with respect to the standard adapted basis $\scB_r$, we have the equalities below, where the second one follows after multiplying the fourth vector by $\mu^{-1}$:
\begin{equation*}
	g \cdot \mathbf{p} 
%	\left[ \begin{array}{cccc}
%			1 & 0 & \omega(e'_r,v_2) & \omega(e'_r,v_3) \\[2pt]
%			0 & 0 & \omega(e'_{r-1},v_2) & \omega(e'_{r-1},v_3) \\[2pt]
%			\mathbf{0} & \mathbf{0} & \mathbf{0} & \mathbf{0} \\[2pt]
%			0 & 0 & \omega(f'_{r-1},v_2) & \omega(f'_{r-1},v_3)\\[2pt]
%			0 & 1 & \omega(f'_r,v_2) & \omega(f'_r,v_3)
%		\end{array}\right] 
		= \left[ \begin{array}{cccc}
			1 & 0 & \lambda & \lambda \, \omega(v_1,v_3)/\omega(v_1,v_2) \\
			0 & 0 & \lambda & \mu \, \varphi_-(\mathbf{p}) \\
			\mathbf{0} & \mathbf{0} & \mathbf{0} & \mathbf{0} \\
			0 & 0 & -\lambda &\varepsilon \, \mu \,\varphi_+(\mathbf{p}) \\
			0 & 1 & \lambda & \omega(v_1,v_2)\,\omega(v_0,v_3)/\sqrt{\Pi}
		\end{array}\right] = \Phi_3 \, \circ \, \pi_3(\mathbf{p}). \vspace{-20pt}
\end{equation*}
%\begin{align*}
%	&\mu^{-1} \cdot \frac{\lambda \, \omega(v_1,v_3)}{\omega(v_1,v_2)} = %\frac{\omega(v_1,v_2)\,\omega(v_2,v_0)}{\sqrt{\Pi} \cdot \omega(v_2,v_3)} \cdot \frac{\sqrt{\Pi}}{\omega(v_1,v_0)} \cdot \frac{\omega(v_1,v_3)}{\omega(v_1,v_2)} = 
%	\CR_1^{-1}(\mathbf{p}), \qand 
%	\mu^{-1} \cdot \frac{\omega(v_1,v_2)\,\omega(v_0,v_3)}{\sqrt{\Pi}} %= \frac{\omega(v_1,v_2)\,\omega(v_2,v_0)}{\sqrt{\Pi} \cdot \omega(v_2,v_3)} \cdot \frac{\omega(v_1,v_2)\,\omega(v_0,v_3)}{\sqrt{\Pi}} 
%	= \varepsilon \cdot \CR_2(\mathbf{p}). \qedhere
%\end{align*}
\end{proof}

\subsection{Proof of \autoref{param5}}
Recall the cross-ratios $\alpha_j, \beta_j, \gamma_j: \clP_r^5 \to \bbC$ from \autoref{alpha,beta,gamma}. With the notation established in \eqref{def_gamma}--\eqref{def_varphi}, we set $\psi_-,\psi_+:\bbC^6 \to \bbC$ to be a.e.
\begin{equation} \label{def_psi}
	\psi_{\eta}(\mathbf{a},\mathbf{b},\mathbf{c}) \coloneqq \eta \, \varphi_\eta(\mathbf{a}) \cdot \Gamma(\mathbf{b}) \ \ + \ \ a_2b_1c_1^{-1} \cdot \Gamma(\mathbf{c}), \quad \eta \in \{\pm 1\},
\end{equation}
for $\mathbf{a} = (a_1,a_2),\,\mathbf{b} = (b_1,b_2),\,\mathbf{c} = (c_1,c_2)$. Let also $\Gamma_\alpha, \, \Gamma_\beta, \, \Gamma_\gamma:\, \clP_r^5 \to \bbC$ be the functions defined a.e. as
\begin{equation*}
	\Gamma_\alpha \coloneqq \Gamma\,\circ\,(\alpha_1,\alpha_2), \quad \Gamma_\beta \coloneqq \Gamma\,\circ\,(\beta_1,\beta_2), \quad \Gamma_\gamma \coloneqq \Gamma\,\circ\,(\gamma_1,\gamma_2).
\end{equation*}
In this subsection, we will consider $\Delta, \, \Delta^{1/2},\,\varphi_\eta,\,\psi_\eta$ a.e. defined functions $\clP_r^{5} \to \bbC$ of 5-tuples after precomposing the expressions from \eqref{def_delta}--\eqref{def_varphi} with the function $(\alpha_1,\alpha_2)$, and the expression from \eqref{def_psi} with $(\alpha_1,\alpha_2,\beta_1,\beta_2,\gamma_1,\gamma_2)$. %Observe that if $\varepsilon = -1$, then the identity $\Gamma_\alpha = \Delta^{1/2}$ holds a.e. in $\clP_r^5$. 

The proof of \autoref{param5} builds up on the computations of the previous subsection. The next lemma establishes the result of pairing the vectors $e'$, $f'$ obtained in \autoref{key lemma} from a tuple $[v_0,v_1,v_2,v_3] \in \clP_r^{\{\!\{4\}\!\}}$ with a fifth vector $v_4$. Its proof is a direct computation that we shall omit, relying on the explicit expressions of $e'$ and $f'$ given in  \eqref{ef_-1} and \eqref{ef_+1}.

\begin{lem} \label{continuation key lemma}
Let $[v_0,v_1,v_2,v_3] \in \clP_r^{\{\!\{4\}\!\}}$, let $\widehat\cdot$ be the complement map onto the hyperbolic plane $\langle v_0,v_1 \rangle$ spanned by the first two vectors, and let $\lambda,\mu,e',f'$ be as in \autoref{key lemma}. If an isotropic vector $v_4 \in V_r \smallsetminus \{0\}$ is such that $\mathbf{p} \coloneqq [v_0,v_1,v_2,v_3,v_4]$ lies in $\clP_r^{(5)}$, then 
\begin{equation*} 
	\omega(e',v_4) = \varepsilon \frac{\omega(v_2,v_4)}{\lambda} \cdot \frac{\psi_-(\mathbf{p})}{\Delta^{1/2}(\mathbf{p})} \qand
%- \frac{\mu \, \omega(v_2,v_4)}{\omega(v_2,v_3)} \, \frac{\varphi_-(\mathbf{p}) \, \Gamma_\beta(\mathbf{p})}{\Delta^{1/2}(\mathbf{p})} + \varepsilon \, \frac{\lambda \, \omega(v_3,v_4)}{\omega(v_2,v_3)} \, \frac{\Gamma_\gamma(\mathbf{p})}{\Delta^{1/2}(\mathbf{p})}, 
	\omega(f',v_4) = \frac{\omega(v_2,v_4)}{\lambda} \cdot \frac{\psi_+(\mathbf{p})}{\Delta^{1/2}(\mathbf{p})}. 
\end{equation*} 
\end{lem}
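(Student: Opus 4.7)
The proof will be a direct verification that, after substituting the explicit expressions for $e'$ and $f'$ provided in the proof of Lemma \ref{key lemma}, the pairings $\omega(e', v_4)$ and $\omega(f', v_4)$ agree with the claimed formulas. The computation naturally splits according to the two parity cases $\varepsilon = \pm 1$, corresponding respectively to the explicit descriptions \eqref{ef_-1} and \eqref{ef_+1} of $e'$ and $f'$. The key preliminary reductions are common to both cases: Lemma \ref{lemma_hats}(i) converts each appearance of $\omega(\widehat{v_i}, v_4)$ into $\omega(\widehat{v_i}, \widehat{v_4})$, and Lemma \ref{lemma_hats}(iii), applied to the triples $[v_0, v_1, v_2, v_4]$ and $[v_0, v_1, v_3, v_4]$, produces the identifications $\omega(\widehat{v_2}, \widehat{v_4}) = \omega(v_2, v_4)\,\Gamma_\beta$ and $\omega(\widehat{v_3}, \widehat{v_4}) = \omega(v_3, v_4)\,\Gamma_\gamma$, alongside $\omega(\widehat{v_2}, \widehat{v_3}) = \omega(v_2, v_3)\,\Gamma_\alpha$ already used in Lemma \ref{key lemma}.

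The algebraic core of the translation to $\omega$-cross-ratios is the identity
\[
	\frac{\alpha_2\,\beta_1}{\gamma_1} \;=\; \frac{\lambda^2 \, \omega(v_3, v_4)}{\omega(v_2, v_3) \, \omega(v_2, v_4)},
\]
which holds in both parities and follows by expanding each $\omega$-cross-ratio via Definition \ref{def:cartan_cr} and applying the uniform expression $\lambda^2 = \omega(v_1, v_2)\,\omega(v_0, v_2)/\omega(v_1, v_0)$. In the case $\varepsilon = -1$, substituting \eqref{ef_-1} together with $\omega(\widehat{v_3}, \widehat{v_2}) = -\omega(v_2, v_3)\,\Gamma_\alpha$, $\mu = -\omega(v_2, v_3)/\lambda$, and the special values $\varphi_-(\mathbf{p}) = 0$ and $\varphi_+(\mathbf{p}) = \Gamma_\alpha$ gives
\[
	\omega(e',v_4) \;=\; \frac{\lambda\,\omega(v_3,v_4)\,\Gamma_\gamma}{-\omega(v_2,v_3)\,\Gamma_\alpha}, \qquad \omega(f', v_4) \;=\; \frac{\omega(v_2,v_4)\,\Gamma_\beta}{\lambda} + \frac{\lambda\,\omega(v_3,v_4)\,\Gamma_\gamma}{\omega(v_2,v_3)\,\Gamma_\alpha},
\]
and a single use of the master identity matches these with the target formulas.

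The case $\varepsilon = +1$ is slightly more intricate, because $e'$ and $f'$ depend on the auxiliary vector $r$. Lemma \ref{lemma_hats}(ii) yields $q(\widehat{v_2}) = -2\lambda^2$, so after the Step 1 substitutions we may write
\[
	r \;=\; \frac{\Gamma_\alpha}{\Delta^{1/2}}\,\widehat{v_2} \;+\; \frac{2\lambda^2}{\omega(v_2, v_3)\,\Delta^{1/2}}\,\widehat{v_3}.
\]
Plugging this into \eqref{ef_+1}, the coefficients of $\omega(v_2, v_4)\,\Gamma_\beta$ collapse via the identity $1 \mp \Gamma_\alpha/\Delta^{1/2} = 2\varphi_\mp/\Delta^{1/2}$, which is an immediate consequence of the definition \eqref{def_varphi}; this assembles the $\varphi_\mp\,\Gamma_\beta$ contribution to $\psi_\mp$, while the $\widehat{v_3}$-contribution produces the $(\alpha_2\beta_1/\gamma_1)\,\Gamma_\gamma$ term via the master identity. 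The principal obstacle throughout is the sign and $\varepsilon$-bookkeeping: one must track that $\omega(v_2, v_0) = \varepsilon\,\omega(v_0, v_2)$ enters $\Pi$ and alters the appearance of $\lambda^2$ when written in expanded form, and that $\omega(\widehat{v_3}, \widehat{v_2}) = \varepsilon\,\omega(\widehat{v_2}, \widehat{v_3})$ carries a sign in the $\varepsilon = -1$ branch. Once this bookkeeping is handled consistently, the remaining checks are mechanical.
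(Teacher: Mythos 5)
Your proposal is correct, and it is exactly the argument the paper intends: the paper explicitly omits this proof as "a direct computation ... relying on the explicit expressions of $e'$ and $f'$ given in \eqref{ef_-1} and \eqref{ef_+1}", and your verification (including the identity $\alpha_2\beta_1/\gamma_1 = \lambda^2\,\omega(v_3,v_4)/(\omega(v_2,v_3)\,\omega(v_2,v_4))$, the reductions via \autoref{lemma_hats}, and the sign bookkeeping in both parity cases) checks out.
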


We give our definition of genericity for 5-tuples at this point, and then prove \autoref{param5}. 

\begin{defn} \label{def_curlyP5}
We let $\clP_r^{\{\!\{5\}\!\}}$ be the set of 5-tuples $(p_0,p_1,p_2,p_3,p_4)  \in \clP_r^{\{5\}}$ such that every 4-subtuple is in $\clP_r^{\{\!\{4\}\!\}}$. In virtue of \autoref{lem:alpha,beta,gamma}, such a tuple is in $\clP_r^{\{\!\{5\}\!\}}$ if and only if\vspace{-2pt}
\[
	(p_0,p_1,p_i,p_j) \in \clP_r^{\{\!\{4\}\!\}} \quad \mbox{for } i,j \in \{2,3,4\} \mbox{ with } i < j.\vspace{-1pt}
\]
One verifies that $\clP_5^{\{\!\{5\}\!\}}$ is a $G_r$-invariant, co-null subset of $\clP_r^{5}$. 
%\begin{enumerate}
%	%\item for every $i \in \{0,\ldots,4\}$, the subtuple $\partial_i(p_0,\ldots,p_4)$ is in $\clP_r^{\{4\}}$, and
%	\item , and
%	\item if $\dim(V_r) = 2r+d> 4$, then $\langle p_0,\ldots,p_4 \rangle$ is 5-dimensional.
%	%\item If $\varepsilon = +1$, then the subspace $\langle p_0,\ldots,p_4 \rangle < V_r$ is 5-dimensional and non-degenerate.  \vspace{2pt}
%\end{enumerate}
\end{defn}

%\begin{rem}
%Under the assumption that $r \geq r_0 + 1$, the condition $\dim(V_r) > 4$ in (ii) is not met only in the case $\varepsilon = -1$ and $r = 2$. %The non-degeneracy of $\langle p_0,\ldots,p_4 \rangle$ from (iii) is not required in the setting of $\varepsilon = -1$, since odd-dimensional symplectic vector spaces are necessarily degenerate. 
%\end{rem}

\begin{proof}[Proof of \autoref{param5}]
For every $r \geq r_1 + 1$, we must  define a co-null subset $\Omega_4 \subset \bbC^5$ and a Borel map
$
	\Phi_4: \Omega_4 \to \clP_r^{5}
$
such that:
\begin{enumerate}[label=(\roman*),leftmargin=25pt]
	\item $\pi_4(\clP_r^{\{\!\{5\}\!\}}) \subset \Omega_4$.
	\item $\Phi_4(\Omega_4) \subset \clP_r^{\{\!\{5\}\!\}}$ and $\pi_4 \, \circ \, \Phi_4 = \id$.
	\item For all $\mathbf{p} \in \clP_r^{\{\!\{5\}\!\}}$, there exists $g \in G_r$ such that $\Phi_4 \, \circ \, \pi_4(\mathbf{p}) = g\, \mathbf{p}$. 
\end{enumerate}

We leave out the case $(\varepsilon,d) = (+1,1)$, $r=2$ for individual consideration, and assume that \vspace{-3pt}
\[
	r \geq \begin{cases}
			3	& \mbox{ if } \varepsilon = +1, \\[-4pt]
			2	& \mbox{ if } \varepsilon = -1,\vspace{-3pt}
		  \end{cases}
\]
Similarly to the definition of $\Omega_3$ in \eqref{omega3}, we let
\begin{align}
	\Omega_4 & \coloneqq \left\{(a_1,a_2,b_1,b_2,c_1) \in (\bbC^\times)^5 \mid	\Delta(a_1,a_2), \,\Delta(b_1,b_2), \, \Delta(c_1,c_2) \neq 0\right\}, \label{omega4}
\end{align}
where $c_2 \coloneqq \varepsilon (a_1b_2c_1)/(a_2 b_1)$. Being an intersection of finitely many non-empty Zariski-open subsets of $\bbC^5$, the set $\Omega_4$ is itself non-empty and Zariski-open, in particular co-null. \autoref{def_curlyP5} and \autoref{delta_nondeg} imply that 
$\Delta(\alpha_1(\mathbf{p}),\alpha_2(\mathbf{p})), \, \Delta(\beta_1(\mathbf{p}),\beta_2(\mathbf{p})), \, \Delta(\gamma_1(\mathbf{p}),\gamma_2(\mathbf{p})) \neq 0$ for every $\mathbf{p} \in \clP_r^{\{\!\{5\}\!\}}$, establishing (i). 

We define the functions $\tilde\phi_4 \colon \Omega_4 \to \langle e_r,e_{r-1},f_{r-1},f_r \rangle$ and $\phi_4:\Omega_4 \to V_r$ as
\begin{align}
	\tilde\phi_4(\mathbf{a},\mathbf{b},c_1) &\coloneqq b_1 \ e_r \ + \ \frac{\psi_{-}(\mathbf{a},\mathbf{b},\mathbf{c})}{\Delta^{1/2}(\mathbf{a})} \ e_{r-1} \ + \ \varepsilon \,\frac{\psi_{+}(\mathbf{a},\mathbf{b},\mathbf{c})}{\Delta^{1/2}(\mathbf{a})} \ f_{r-1} \ + \ \varepsilon \, b_2 \ f_r, \nonumber \\[4pt]
	\phi_4(\mathbf{a},\mathbf{b},c_1) &\coloneqq \tilde\phi_4(\mathbf{a},\mathbf{b},c_1) \ + \left\{e_{r-2} - \left(\frac{q\big(\tilde\phi_4(\mathbf{a},\mathbf{b},c_1)\big)}2\right) \cdot f_{r-2}\right\}, \label{phi4_ef}	
\end{align}
with $\mathbf{a}$, $\mathbf{b}$, $\mathbf{c}$ as in \eqref{def_psi} and  $c_2 \coloneqq \varepsilon (a_1b_2c_1)/(a_2 b_1)$. Observe that if $\varepsilon= +1$, the vector $\tilde\phi_4(\mathbf{a},\mathbf{b},c_1)$ is not necessarily isotropic. In virtue of the assumption that $r \geq 3$ in that setting, we are able to incorporate in $\phi_4(\mathbf{a},\mathbf{b},c_1)$ a correction that makes it isotropic for any $(\mathbf{a},\mathbf{b},c_1)$.

We define $\Phi_4\colon\Omega_4 \to \clP_r^5$,
\begin{equation*} \label{bigphi4}
	 \Phi_4(\mathbf{a},\mathbf{b},c_1) \coloneqq \left[e_r,\,f_r,\,\phi_2,\ \phi_3(\mathbf{a}),\ \phi_4(\mathbf{a},\mathbf{b},c_1)\right]. 
\end{equation*}
Note that the first four lines in $\Phi_4(\mathbf{a},\mathbf{b},c_1)$ correspond to the tuple $\Phi_3(\mathbf{a})$ introduced in \eqref{bigphi3}. 
Abbreviating $\phi_3 \equiv \phi_3(\mathbf{a})$ and $\phi_4 \equiv \phi_4(\mathbf{a},\mathbf{b},c_1)$, we obtain, in addition to \eqref{omega_pairing}, the identities
\begin{equation} \label{omega_pairing2}
	\omega(e_r,\phi_4) = \varepsilon b_2, \quad \omega(f_r,\phi_4) = \varepsilon b_1, \quad
	\omega(\phi_2,\phi_4) = \varepsilon, \quad \omega(\phi_3,\phi_4) = a_2b_1c_1^{-1}.
\end{equation} 
In particular, this proves that $\Phi_4$ ranges in $\clP_r^{(5)}$.  
%We verify the last identity: \vspace{-5pt}
%\begin{align*}
%	\omega(\phi_3,\phi_4) &= \varepsilon a_1^{-1}b_2 \ + \ a_2b_1^{-1}  \ + \ \overbrace{(\varepsilon-1) \cdot \left(\frac{\Delta-\Gamma^2}{2\Delta^{1/2}}\right)\!(a_1^{-1},a_2)}^{=\,0}\, \cdot \, \Gamma(b_1^{-1},b_2) \ + \\
%	&\quad + \underbrace{\left(\frac{(1+\varepsilon)\,\Delta^{1/2}+(1-\varepsilon)\,\Gamma}{2\Delta^{1/2}}\right)\!(a_1^{-1},a_2)}_{= \, 1} \, \cdot \, a_2b_1^{-1}c_1\cdot \Gamma(c_1^{-1},c_2) \\
%	&= \varepsilon a_1^{-1}b_2 \ + \ b_1^{-1} a_2 \ + \ a_2b_1^{-1}c_1 \cdot (1 - c_1^{-1} - c_2) \\
%	&= \varepsilon a_1^{-1}b_2 + a_2b_1^{-1}c_1 - a_2b_1^{-1}c_1\cdot(\varepsilon b_1b_2a_1^{-1}a_2^{-1}c_1^{-1}) = a_2b_1^{-1}c_1. 
%\end{align*}
%
From \eqref{omega_pairing} and \eqref{omega_pairing2}, we derive 
\begin{align*}
 \alpha_1\big(\Phi_4(\mathbf{a},\mathbf{b},c_1)\big) &= a_1, \qquad  
  \beta_1\big(\Phi_4(\mathbf{a},\mathbf{b},c_1)\big) = b_1, \qquad
  \gamma_1\big(\Phi_4(\mathbf{a},\mathbf{b},c_1)\big) = c_1, \\
  \alpha_2\big(\Phi_4(\mathbf{a},\mathbf{b},c_1)\big) &= a_2, 
 \qquad
 \beta_2\big(\Phi_4(\mathbf{a},\mathbf{b},c_1)\big) = b_2.
\end{align*}
This implies that $\pi_4 \, \circ \, \Phi_4 = \id$ and that every 4-subtuple of $\Phi_4(\mathbf{a},\mathbf{b},c_1)$ lies in $\clP_r^{\{\!\{4\}\!\}}$. In order to conclude that $\Phi_4(\mathbf{a},\mathbf{b},c_1) \in \clP_r^{\{\!\{5\}\!\}}$ holds, it suffices to observe that if $\dim(V_r) = 2r+d \geq 5$, then necessarily $r > 2$, so that the correction term in \eqref{phi4_ef} is non-zero, and hence, the lines in $\Phi_4(\mathbf{a},\mathbf{b},c_1)$ are linearly independent. This proves point (ii).

For (iii), fix an arbitrary $\mathbf{p} = [v_0,v_1,v_2,v_3,v_4] \in \clP_r^{\{\!\{5\}\!\}}$ and let $\{e'_r,e'_{r-1},f'_{r-1},f_r'\}$ be the adapted basis of the subspace $\langle v_0,v_1,v_2,v_3 \rangle$, 
\begin{equation*} 
	e'_r \coloneqq \frac{\lambda \, v_1}{\omega(v_1,v_2)}, \quad e'_{r-1} \coloneqq e', \quad f'_{r-1} \coloneqq f', \quad f'_r \coloneqq \frac{\omega(v_1,v_2)\,v_0}{\sqrt{\Pi}},
\end{equation*}
defined as in \eqref{4-basis}, where $\Pi,\lambda,\mu,e',f'$ are functions of $(v_0,v_1,v_2,v_3)$ as in \autoref{key lemma}. Let also $g = g(v_0,v_1,v_2,v_3) \in G_r$ be the element from \eqref{defn_g}, so that 
\[
	g\cdot [v_0,v_1,v_2,v_3] = \Phi_3(\pi_3(\mathbf{p})).
\]
Let $x_0 \in \langle v_0,v_1,v_2,v_3\rangle^\perp$ be the vector such that 
\[
	g \cdot v_4 = x_0 + \omega(e'_r,v_4)\, e_r + \omega(e'_{r-1},v_4)\, e_{r-1} + \omega(f'_{r-1},v_4)\, f_{r-1} + \omega(f'_r,v_4)\, f_r. \\
\]
Since the span of the lines in $g [v_0,v_1,v_2,v_3]$ equals $\langle e_r,e_{r-1},f_{r-1},f_r \rangle$, we have that $x_0 \neq 0$ whenever $r > 2$. In coordinates with respect to $\scB_r$, we have
\begin{align*}
	g \cdot [v_4] &= \left[ \begin{array}{c}
			\omega(e'_r,v_4)\\[5pt]
			\omega(e'_{r-1},v_4) \\[4pt]
			x_0 \\[4pt]
			\omega(f'_{r-1},v_4)\\[4pt]
			\omega(f'_r,v_4)
		\end{array}\right] 
		= \left[ \begin{array}{c}
			\lambda \,  \omega(v_1,v_4)/\omega(v_1,v_2)\\[5pt]
			\varepsilon \frac{\omega(v_2,v_4)}{\lambda} \cdot \frac{\psi_-(\mathbf{p})}{\Delta^{1/2}(\mathbf{p})} \\[4pt]
			x_0 \\[4pt]
			\frac{\omega(v_2,v_4)}{\lambda} \cdot \frac{\psi_+(\mathbf{p})}{\Delta^{1/2}(\mathbf{p})}\\[4pt]
			\omega(v_1,v_2) \, \omega(v_0,v_4)/\sqrt\Pi
		\end{array}\right]
		= \left[ \begin{array}{c}
			\beta_1(\mathbf{p})\\[5pt]
			\frac{\psi_-(\mathbf{p})}{\Delta^{1/2}(\mathbf{p})}  \\[5pt]
			x_0' \\[5pt]
			\varepsilon \, \frac{\psi_+(\mathbf{p})}{\Delta^{1/2}(\mathbf{p})} \\[5pt]
			\varepsilon \, \beta_2(\mathbf{p}) 
		\end{array}\right] = \left[\tilde\phi_4 \,(\pi_4(\mathbf{p}))+x_0'\right]
\end{align*}
where $x'_0 \coloneqq \varepsilon \lambda/\omega(v_2,v_4) \cdot x_0 \neq 0$. Indeed, the last equality holds by \autoref{lem:alpha,beta,gamma}, which implies that $\gamma_2(\mathbf{p}) = \varepsilon(\alpha_1\alpha_2^{-1}\beta_1^{-1}\beta_2\gamma_1)(\mathbf{p})$. 

Note that there exists an element $g' \in G_{r-2} < G_r$ such that $g'x_0 = e_{r-2} + (q(x_0')/2) \, f_{r-2}$, and that
\[
q(x'_0) = -q\big(\tilde\phi_4 \,(\pi_4(\mathbf{p})\big).
\]
Therefore, $(g'g)\cdot \mathbf{p} = \Phi_4 \, \circ \, \pi_4(\mathbf{p})$, completing the proof of (iii).

The proof for the case $(\varepsilon,d) = (+1,1)$, $r = 2$ is analogous, but requires a few modifications. We must replace everywhere in the argument above the set $\Omega_4$ as in \eqref{omega4} by its co-null subset $\Omega_4' \subset \Omega_4$ defined below, and the map $\phi_4$ from \eqref{phi4_ef} by $\phi'_4: \Omega_4' \to V_r$:
\begin{align*}
	&\Omega_4' \coloneqq \left\{(\mathbf{a},\mathbf{b},c_1) \in \Omega_4 \left| \ q\big(\tilde\phi_4(\mathbf{a},\mathbf{b},c_1) \big) \neq 0 \right.\right\}, \\
	&\phi_4'(\mathbf{a},\mathbf{b},c_1) \coloneqq \tilde\phi_4(\mathbf{a},\mathbf{b},c_1) \ + \sqrt{-q\big(\tilde\phi_4(\mathbf{a},\mathbf{b},c_1)\big)} \cdot h.\qedhere
\end{align*}
\end{proof}

\begin{rem} %\label{low_rank_cases}
The isomorphism $G_1\lmod \clP_1^5 \cong \bbC^2$ of Lebesgue spaces holds for the parameters $(\varepsilon,d) \in \{(+1,1),(-1,0)\}$. Indeed, in the classical setting (see \autoref{rem:classical}), any 5-tuple $\mathbf{p} \in \clP_1^{\{\!\{5\}\!\}}$ is in the $G_1$-orbit of the tuple $(\infty,0,1,a,b) \in \widehat\bbC^5$ with $a=\alpha_0(\mathbf{p})$ and $b=\beta_0(\mathbf{p})$. 

In the other case that is not covered by \autoref{param5}, namely $(\varepsilon,d)=(+1,0)$ and $r=2$, low dimensionality forces the extra relation $q(\tilde\phi_4(\pi_4(\mathbf{p})))=0$, which causes that $G_2\lmod \clP_2^5 \cong \bbC^4$. 
\end{rem}

%\begin{rem} \label{rk:rank_one_5}
%In the case $\varepsilon = -1$, $r = 1$, the relation from \autoref{rk:rank_one_4} implies the isomorphism $\clP_1^5 \cong \bbC^2$ of Lebesgue spaces by only two independent $\omega$-cross-ratios. The case $(\varepsilon,d)=(+1,1)$, $r=1$ is similar. A different extra relation is forced by low dimensionality in the case $(\varepsilon,d) = (+1,0)$, $r = 2$, for generic 5-tuples; see [REF] below. 
%\end{rem}

\end{document}